\tikzset{sdot/.style = {fill, circle, inner sep = 1pt}}
\newcounter{bullet}
\newtheorem{theorem}{Theorem}[section]
\newtheorem{corollary}[theorem]{Corollary}
\newtheorem{lemma}[theorem]{Lemma}
\newtheorem{definition}[theorem]{Definition}
\newtheorem{remark}[theorem]{Remark}
\definecolor{blue-shortcut}{rgb}{0.2, 0.2, 0.6}
\subjclass[2020]{Primary 60K35; Secondary 05C80, 60C05, 82B43, 91D30}
\begin{document}

\title{Local Limits of Small World Networks}

\author[Y. Alimohammadi]{Yeganeh Alimohammadi}
\address{Marshall School of Business, University of Southern California}
\email{yalimoha@usc.edu}

\author[S. I\c{s}\i k]{Senem I\c{s}\i k}
\address{Management Science and Engineering, Stanford University}
\email{senemi@stanford.edu}

\author[A. Saberi]{Amin Saberi}
\address{Management Science and Engineering, Stanford University}
\email{saberi@stanford.edu}

\begin{abstract}
Small-world networks, known for high local clustering and short path lengths, are a fundamental structure in many real-world systems, including social, biological, and technological networks. We apply the theory of (marked) local convergence (also known as Benjamini-Schramm convergence) to derive the limiting behavior of the local structures for two commonly studied small-world network models: the Watts-Strogatz and the Kleinberg models. Establishing local convergence enables us to show that key network measures---clustering coefficient, PageRank, greedy maximal independent set, number of spanning trees and tree entropy---converge as network size increases, with their limits determined by the graph's local structure. Additionally, this framework facilitates the estimation of global phenomena---such as the size of the giant component under bond percolation and the closely related properties, the size of the epidemic and information cascades---using local information from small neighborhoods. Furthermore, we observe a critical change in the behavior of the limit exactly when the parameter governing long-range connections in the Kleinberg model crosses the threshold where decentralized search remains efficient, offering a new perspective on why decentralized algorithms fail in certain regimes.
\end{abstract}

\maketitle
{\small{\textbf{Keywords:} Benjamini-Schramm convergence, local weak convergence, random graphs, small-world networks, social networks, percolation, epidemics on graphs, distributed algorithms.}}

\section{Introduction}
Many real-world networks, social, biological, and technological, exhibit small-world properties: high local clustering along with short average distances \cite{bork2004protein, humphries2006brainstem,telesford2011ubiquity}. Two canonical models capture these phenomena from complementary perspectives: the Watts-Strogatz (WS) \cite{watts} and Kleinberg \cite{kleinberg} models. The WS model starts from a ring lattice where each node is connected to its $k$ nearest neighbors on each side and rewires each edge independently with probability $\phi$ to a random node (see Figure~\ref{ws-model}).
The Kleinberg model embeds nodes on an $n\times n$ grid and adds $q$ long‑range shortcuts from each node, where the probability of a shortcut decays as a power-law distribution of lattice distance with exponent $\ell$ (see Figure~\ref{kleinberg-model}).

While both models have been extensively studied from structural and algorithmic viewpoints  \cite{barrat2000properties,fraigniaud2010searchability,giakkoupis2011optimal,kundu2014network, nguyen2005analyzing, uzzi2007small}, a unifying limit‑object description of their local geometry has been missing. Establishing such a limit through the framework of local weak convergence \cite{oded,Lyons2005,remco} 
immediately implies that any statistic or algorithm whose output depends on a bounded‑radius neighborhood converges to its counterpart on the limit. Moreover,   the scope of this perspective is broader still: a large class of dynamics and graph statistics are continuous with respect to the local topology and hence inherit convergence \cite{alimohammadi2023epidemic,banerjee2024local,lacker2019local,remco}. 
% This also enables sampling-based algorithms of some global network behavior using only local observations, without requiring access to the full graph\cite{alimohammadi2022algorithms, banerjee2024local}.
Although the theory of local convergence is well‑developed for many random graph models (e.g., inhomogeneous random graphs, configuration models and related families \cite{BolJanRio07,KomLod20,van2021local}), it has not previously been established for small‑world models.

\subsection*{Our contributions.} 
In this paper, we establish the local limits of the Watts--Strogatz and Kleinberg models. 
% For WS, the local limit of $WS(n,\phi,k)$ is a recursive \emph{$k$-Fuzz} process (Theorem~\ref{main-thm-ws}; see also Figure~\ref{full-k-fuzz}).
For the Watts--Strogatz model, we show that the local limit is a recursive object built on the infinite $k$-path: a rooted graph in which each node is connected to its $k$ nearest neighbors on both sides. Each outgoing ring edge is then independently either kept or rewired into a shortcut. These shortcuts then lead to new, independent copies of a similar local structure. We formalize this recursive limit as the \emph{$k$-Fuzz} process (Theorem~\ref{main-thm-ws}; see also Figure~\ref{full-k-fuzz}).

For the Kleinberg model, we uncover a phase transition at $\ell=2$.
When $\ell<2$, the limit is a locally tree-like \emph{$(q,k)$-patch}: informally, shortcuts leave every fixed neighborhood and do not create local cycles, so successive shortcut traversals lead to asymptotically independent regions of the lattice (Theorem~\ref{main-thm-kl-1}; see Figure~\ref{q-k patch}).
When $\ell>2$, the limit is a bounded-range kernel perturbation of the $k$-lattice (Theorem~\ref{main-thm-kl-2}).
The critical case $\ell=2$ combines features of both regimes: the shortcut structure remains tree-like as in the case $\ell<2$, but the mark dynamics collapse, so every rooted neighborhood has the same asymptotic behavior as in the case $\ell>2$ (Theorem~\ref{main-thm-kl-critical}; see Figure~\ref{q-k patch}).

Establishing these limits immediately yields convergence of a wide range of local statistics such as clustering coefficients, PageRank, number of spanning trees, greedy maximal independent sets, and allows us to derive sampling approximations for dynamics on graphs such as epidemics on networks and the size of the giant after percolation in random graphs,  discussed in Section~\ref{sec-algorithmic-app}.  
Next, we review the definition of local convergence of graphs and describe the limits of small-world models; we then discuss their applications.

\section{Local Convergence} \label{theory}

To describe the limiting objects we use the theory of local convergence \cite{AldSte04,oded}. At a high level, a sequence of graphs $(G_n)_{n\in\mathbb N}$ is said to converge locally if the empirical distribution of the neighborhood of randomly chosen nodes converges to a limit distribution. %The chosen node can be seen as a root. 

Formally, we work with \emph{rooted, locally finite, marked graphs}.  A marked rooted graph is a finite or infinite graph $G=(V,E)$ together with:
(1) a distinguished node as root $o\in V$; and
(2) a \emph{mark} attached to each node and/or edge, taking values in a separable metric space $\Xi$.
Marks encode constant-size information (direction of an edge, a vector of real numbers, etc). For $r\in\mathbb{N}$, we will let $B_r(G,o)$ denote the rooted radius-$r$ neighborhood of $o$ in $G$, including all marks and the root.

% The theory of local limits equips the space of rooted graphs with a metric which is Polish and separable (see Appendix~\ref{metric}). 
One can equip the space of rooted, marked graphs with a Polish, separable metric (see Appendix~\ref{metric}).
This metric allows us to define probability measures, and convergence of such measures. 
% To define it formally, we need the following notation: for $r\in\mathbb{N}$, let $B_r(G,o)$ denote the rooted radius-$r$ neighborhood of $o$ in $G$, including all marks and the root.

\begin{definition}[Local Convergence in Probability] \label{local-convergence-def}
Let $(G_n, M_n)_{n\geq 1}$ be a sequence of random graphs. Then $(G_n, M_n)_{n\geq 1}$  converges locally in probability to $(G,o, M)\sim \mu$, when for any $\epsilon>0$, all integers $r>0$,  and any marked rooted graph $H_* = (H_*, o_*, M_*)$, 
\begin{equation*}
    \frac{1}{|V(G_n)|}\sum_{o_n\in V(G_n)}   \mathbbm{1}_r^\epsilon((G_n,o_n)\simeq H_*)\overset{\mathbb P}{\to}\mu(  \mathbbm{1}_r^\epsilon((G,o)\simeq H_*))
\end{equation*}
where $ \mathbbm{1}_r^\epsilon$ is defined as the indicator that $B_r(G_n,o_n)$ and $B_r(H_*,o_*)$ are isomorphic, and the distance of the corresponding marks is bounded by $\epsilon$. Formally, 
\begin{align*} 
\mathbbm{1}_r^\epsilon((G_n,o_n)\simeq (H_*, o_*, M_*))=  \mathbbm{1}\Big( B_r(G_n,o_n)\simeq B_r(H_*,o_*) \text{ and}&\\ \exists \pi \text{  such that,}
\max_{u\in V(B_r(H_*,o_*))} d_\Xi(M_n(u),M_*(\pi(u)))< \epsilon
&\\
\max_{uv\in E(B_r(H_*,o_*))} d_\Xi(M_n(u,v),M_*(\pi(u,v)))< \epsilon \Big)&
\end{align*}
where $\pi$ runs over all isomorphisms between $B_r(G_n,o_n)$ and $B_r(H_*,o_*)$.
\end{definition}

This definition says that a sequence of graphs converges if the empirical frequencies of subgraphs, with an $\epsilon$-tolerance for error in the marks, converge to a limiting distribution. For more on local convergence, see the beautiful book of \cite[Chapter 2]{remco}.

\section{Main Results}
We are now ready to state the main results. For each of the small-world networks, we start by defining the models first, and then we describe the local limits.

\subsection{Watts-Strogatz (WS) Model} \label{ws-process}

The WS model \cite{watts} can be formally described as follows: Start with a cycle of \(n\) nodes. Connect each node to its \(k\) nearest neighbors on either side, forming a regular ring lattice. This structure is called a \textit{\(k\)-ring} (where \(k\) is a constant independent of \(n\)). Next, direct all edges in a clockwise direction. Rewire each edge's endpoint (the outgoing end) with probability $\phi$ to a uniformly selected node. Finally, the graph is made undirected by treating all edges as bidirectional.\footnote{Self-loops and multi-edges are allowed, but this does not affect the local limit as we will see later.}

The resulting graph, denoted $WS(n,\phi,k)$, has two kinds of edges: \emph{ring edges} (non-rewired) and \emph{shortcuts} (rewired edges). Also, we refer to the direction of edges as \textit{incoming} or \textit{outgoing} based on their direction in the initial clockwise orientation of the construction above.  Next, we describe the limit.

\begin{figure}
\begin{center}
    \begin{tikzpicture}[scale=0.6]
        \begin{scope}[]
            \foreach \i in {1, ..., 12} {
            \node [sdot] (\i) at (-30*\i:2) {};
            }
            \foreach \j [count = \i] in {2, ..., 12, 1} {
                \draw[->] (\i) -- (\j);
            }
            \foreach \j [count = \i] in {3, ..., 12, 1, 2} {
                \draw[->] [out = {-60  -30*\i}, in = {+60 -30*\j}] (\i) to (\j);
            }
        \end{scope}
        \begin{scope}[xshift = 6cm]
            \foreach \i in {1, ..., 12} {
            \node [sdot] (\i) at (-30*\i:2) {};
            }
            \draw[->] (1) -- (2);
            \draw[->] [out = {-60  -30*1}, in = {+60 -30*3}] (1) to (3);
            \draw[->] (2) -- (3);
            \draw[->] [out = {-60  -30*2}, in = {+60 -30*4}] (2) to (4);
            \draw[->, blue-shortcut] (3) -- (10);
            \draw[->, blue-shortcut] (3) -- (5);
            \draw[->, blue-shortcut] (4) -- (12);
            \draw[->, blue-shortcut] (4) -- (1);
            \draw[->] (5) -- (6);
            \draw[->] [out = {-60  -30*5}, in = {+60 -30*7}] (5) to (7);
            \draw[->, blue-shortcut] (6) -- (9);
            \draw[->] (6) -- (7);
            \draw[->, blue-shortcut] (7) -- (2);
            \draw[->, blue-shortcut] (7) -- (5);
            % \draw[->, blue-shortcut] (8) -- (8);
            \draw[->, blue-shortcut] (8) -- (6);
            \draw[->] (9) -- (10);
            \draw [out = {-60  -30*9}, in = {+60 -30*11}] (9) to (11);
            \draw[->] (10) -- (11);
            \draw[->, blue-shortcut] (10) -- (2);
            \draw[->, blue-shortcut] (11) -- (7);
            \draw[->, blue-shortcut] (11) -- (9);
            \draw[->, blue-shortcut] (12) -- (9);
            \draw[->, blue-shortcut] (12) -- (7);
        \end{scope}
        \begin{scope}[xshift = 12cm]
            \foreach \i in {1, ..., 12} {
            \node [sdot] (\i) at (30*\i:2) {};
            }
            \draw[->, blue-shortcut] (1) -- (2);
            \draw[->, blue-shortcut] (1) -- (5);
            \draw[->, blue-shortcut] (2) -- (8);
            \draw[->, blue-shortcut] (2) -- (6);
            \draw[->, blue-shortcut] (3) -- (10);
            \draw[->, blue-shortcut] (3) -- (5);
            \draw[->, blue-shortcut] (4) -- (12);
            \draw[->, blue-shortcut] (4) -- (1);
            \draw[->, blue-shortcut] (5) -- (3);
            \draw[->, blue-shortcut] (5) -- (7);
            \draw[->, blue-shortcut] (6) -- (9);
            \draw[->, blue-shortcut] (6) -- (7);
            \draw[->, blue-shortcut] (7) -- (2);
            \draw[->, blue-shortcut] (7) -- (5);
            \draw[->, blue-shortcut] (8) -- (9);
            \draw[->, blue-shortcut] (8) -- (6);
            \draw[->, blue-shortcut] (9) -- (11);
            \draw[->, blue-shortcut] (9) -- (5);
            \draw[->, blue-shortcut] (10) -- (11);
            \draw[->, blue-shortcut] (10) -- (2);
            \draw[->, blue-shortcut] (11) -- (7);
            \draw[->, blue-shortcut] (11) -- (9);
            \draw[->, blue-shortcut] (12) -- (9);
            \draw[->, blue-shortcut] (12) -- (7);
        \end{scope}
    \end{tikzpicture}
\end{center}
\caption{$WS(12, \phi, 2)$ increasing randomness from left to right: for $\phi = 0$, $0 < \phi < 1$, $\phi = 1$ respectively. Colored edges correspond to shortcuts while black edges correspond to ring edges. Arrows indicate the directions of edges in the Watts-Strogatz process but the final graphs are undirected.}
\label{ws-model}
\end{figure}
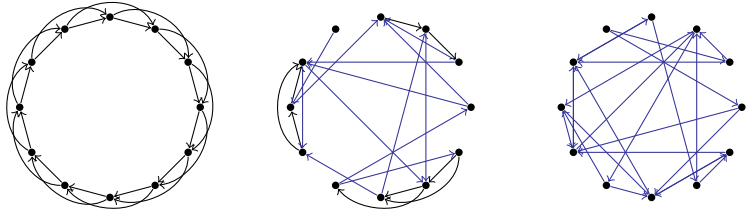

\subsection{The Structure of the Local Limits of Watts-Strogatz Model} \label{local-limit-ws}

We begin by analyzing the limit of the initial $k$-ring before introducing rewiring. 
The local limit of the $k$-ring is an infinite rooted path where each node is connected to its $k$ nearest neighbors on either side, resulting in each node having degree $2k$.
We call this the \emph{Full $k$-Path}. 
Fix an orientation of the path. For each node, we define the $k$ edges to the right as its outgoing edges, and the $k$ edges to the left as its incoming edges.
We also define a variant of the Full $k$-Path, denoted the \emph{Reduced $k$-Path}, which is identical except that the root has only $k-1$ outgoing edges, with one outgoing edge removed uniformly at random.
These serve as the building blocks for the $k$-Fuzz structures.

% We approach the problem by first examining the limit structure of the initial network (the $k$-ring) and then introducing the rewiring process in the limit. The local limit of the $k$-ring is an infinite rooted path where each node is connected to its \( k \)-nearest neighbors. This results in a graph where every node has exactly \( 2k \) neighbors, a configuration we call the  \textit{Full \( k \)-Path}. We define the \textit{outgoing edges} as the connections from a node to its \(k\) nearest neighbors on its right-hand side.  As an extension of the  Full \(k\)-Path, we define a reduced version of the Full \( k \)-Path (called \textit{Reduced $k$-Path}) with the key difference that the root node has only \( k-1 \) outgoing edges (one of the original $k$ outgoing edges is removed uniformly at random). We now define the \( k \)-Fuzz structures.

\subsubsection*{\( k \)-Fuzz structures} 
A \textbf{Full $k$-Fuzz} is obtained by starting from a Full $k$-Path and, for each node, independently keeping each outgoing ring edge with probability $1-\phi$ or rewiring it with probability $\phi$. 
Each rewired edge becomes a shortcut to the root of a new Full $k$-Fuzz, producing an independent recursive copy. 
In addition, each node receives $\text{Poi}(\phi k)$ incoming shortcuts, each connecting to the root of a \emph{Reduced $k$-Fuzz}. 
The \textbf{Reduced $k$-Fuzz} is defined analogously but starts from a Reduced $k$-Path (see Figure~\ref{full-k-fuzz}).

\begin{figure}
    \begin{center}
    \begin{tikzpicture}[scale=0.6]
        \foreach \i in {1, ..., 9} {
            \node (\i) [sdot] at (\i, 0) {};
        }
        \node (5) [sdot, red] at (5, 0) {};
        \node at (5,.6){\tiny Full $k$-Fuzz};
        \foreach \i\j in {1/2, 2/3, 3/4, 4/5, 5/6, 6/7, 7/8, 8/9} {
            \draw (\i) -- (\j);
        }
        \foreach \i\j in {1/3, 2/4, 3/5, 4/6, 6/8, 7/9} {
            \draw [out = 30, in = 150] (\i) to (\j);
        }

        \foreach \i in {1, ..., 7} {
            \node (a\i) [sdot] at (\i - 2.5, -2) {};
        }
        \foreach \i\j in {1/2, 2/3, 3/4, 5/6, 6/7} {
            \draw (a\i) -- (a\j);
        }
        \foreach \i\j in {1/3, 2/4, 3/5, 4/6, 5/7} {
            \draw [out = 30, in = 150] (a\i) to (a\j);
        }

        \foreach \i in {1, ..., 7} {
            \node (b\i) [sdot] at (\i + 4.5, -2) {};
        }
        \foreach \i\j in {1/2, 2/3, 3/4, 4/5, 5/6, 6/7} {
            \draw (b\i) -- (b\j);
        }
        \foreach \i\j in {1/3, 2/4, 3/5, 4/6, 5/7} {
            \draw [out = 30, in = 150] (b\i) to (b\j);
        }
        \draw [->, blue-shortcut, thick, >=latex] (a4) -- (5);
        \draw [->, blue-shortcut, thick, >=latex] (5) -- (b4);
        \node at (b4) [below] {\tiny Full $k$-Fuzz};
        \node at (a4) [below] {\tiny Reduced $k$-Fuzz};
    \end{tikzpicture}
\end{center}
    \caption{Full $k$-Fuzz (for $k=2$). The outgoing shortcut from the root (shown in red) connects to another full $k$-Fuzz, while the incoming shortcut connects to a reduced $k$-Fuzz.}
    \label{full-k-fuzz}
\end{figure}
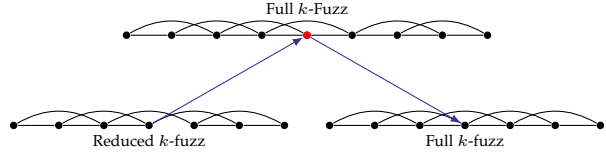

Let \( WS(\phi,k) \) denote the measure in the space of rooted graphs that describes the Full \( k \)-Fuzz process above. Our first main result formally establishes $WS(\phi,k)$ as the local limit of $WS(n,\phi,k)$.

\begin{theorem}[Local Limit of the Watts-Strogatz model] \label{main-thm-ws}
For any integer $k\geq 1$, Watts-Strogatz model converges locally in probability to the Full $k$-Fuzz described above, i.e., $WS(n,\phi, k)\overset{\mathbb P}{\to}WS(\phi, k)$ as $n\to\infty$. 
\end{theorem}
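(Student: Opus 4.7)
The plan is a BFS coupling argument for convergence at a single uniformly random root, upgraded to convergence in probability of empirical frequencies by a second-moment estimate. Fix $r \geq 1$, $\epsilon > 0$, and a marked rooted graph $H_*$. Let $o_n$ be a uniformly random vertex of $G_n = WS(n, \phi, k)$, and reveal $B_r(G_n, o_n)$ step-by-step via breadth-first search. When a newly discovered vertex $v$ is processed we expose (i) its $k$ outgoing ring edges, each independently retained with probability $1-\phi$ or rewired to a uniformly random target with probability $\phi$; (ii) its $k$ potential incoming ring edges from $v-1, \dots, v-k$, each retained by the independent coin flip at the predecessor's outgoing side; and (iii) its incoming shortcuts, whose count is Binomial with parameters $(nk - O(1), \phi/n)$ after conditioning on already-exposed outgoing sides, with sources uniform among the unexplored nodes.

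Up to depth $r$, BFS reveals at most $C = C(r, k, \phi) = O((2k + k\phi)^r)$ vertices in expectation, dominated by a Galton--Watson process with finite-mean offspring. The probability of any \emph{collision} -- a rewired endpoint landing inside the already-explored set, two shortcuts sharing a target, or the cycle wrapping around within distance $r$ -- is $O(C^2 / n) = o(1)$. Conditioned on no collisions, the exposed structure at each step exactly matches the corresponding step in the Full $k$-Fuzz construction: outgoing-ring retention is Bernoulli$(1-\phi)$, rewired outgoing edges hit fresh nodes that start independent Full $k$-Fuzz subprocesses, incoming-ring retention is analogous, and the Binomial incoming-shortcut count converges to Poi$(k\phi)$ with each source initiating a Reduced $k$-Fuzz (reduced precisely because one of that source's $k$ outgoing edges is already used by the shortcut into $v$). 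Summing total-variation estimates over the $O(1)$-many BFS steps yields $B_r(G_n, o_n) \overset{d}{\to} B_r(G^*, o^*)$ with $G^* \sim WS(\phi, k)$.

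To upgrade this to convergence in probability of $X_n := \frac{1}{n}\sum_{v \in V(G_n)} \mathbbm{1}_r^\epsilon((G_n, v) \simeq H_*)$, we bound its variance. Taking two independent uniform roots $o_1, o_2$, with probability $1 - O(C^2/n)$ their BFS explorations use disjoint cycle arcs, share no shortcut targets, and exchange no shortcut between them; under this event the two explorations are genuine independent copies of the Full $k$-Fuzz BFS. Thus $\mathbb{E}[X_n^2] \to \mu(\mathbbm{1}_r^\epsilon(\cdots))^2$ and $\mathrm{Var}(X_n) \to 0$, so $X_n \overset{\mathbb{P}}{\to} \mu(\mathbbm{1}_r^\epsilon(\cdots))$ as required.

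\textbf{Main obstacle.} The principal technical hurdle is that rewiring globally couples the outgoing side of every node with the incoming shortcut counts of every other node, so the Poi$(k\phi)$ limit for incoming shortcuts cannot be applied marginally and must be maintained jointly throughout the exploration. The fix is a careful two-phase reveal at each BFS step: first expose outgoing ends of already-discovered vertices, then sample the residual incoming shortcuts at the next BFS vertex from the remaining pool of rewired outgoing ends -- still Binomial with parameters close to $(nk, \phi/n)$ -- and iterate. Combined with the uniform bound on the exploration size, this bookkeeping keeps the joint collision probability $o(1)$ and closes the coupling.
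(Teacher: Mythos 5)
Your proposal follows essentially the same approach as the paper: a step-by-step BFS reveal in which the conditional incoming-shortcut count is Binomial and converges to Poisson, a collision bound forcing shortcuts to stay locally tree-like (no local cycles in the $r$-ball), and a second-moment/Chebyshev argument exploiting asymptotic independence of two uniform roots' explorations. The paper formalizes your ``two-phase reveal'' by conditioning on the BFS history $T_{i-1}$ (Lemma~\ref{lm-degdist-WS}) and adds a layer of $4k$ edge marks with a canonical BFS ordering to turn isomorphism counting into equality, but the underlying probabilistic argument coincides with its Lemmas~\ref{ws-1moment}, \ref{no cycle}, and~\ref{ws-2moment}.
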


To build intuition for the local limit, consider the neighborhood of a uniformly chosen root.   Traversing a shortcut leads, with high probability, to a node that is sufficiently far from the root so that its local neighborhood is `asymptotically' independent of that of the original node.  This results in an independent $k$-Fuzz process rooted at the target node. The degree of the target node within the $k$-Fuzz depends on the type of shortcut:
(1) If we traverse an incoming shortcut to reach the target node, then one of the  \( k \) outgoing ring edges of the target node has been rewired. This leaves \( k-1 \) remaining edges at the target node that could potentially be rewired, hence the rest of the process is described by a Reduced \( k \)-Fuzz. 
(2) If we traverse an outgoing shortcut, we encounter a Full \( k \)-Fuzz, as all \( k \) outgoing edges are available for potential rewiring.

A key structural fact is that shortcuts do not form cycles in any fixed neighborhood (Lemma~\ref{no cycle}); this independence drives the recursive $k$-Fuzz limit.  For this purpose, we carefully track whether each edge is a shortcut or a ring edge, and how far the endpoints are from each other in the original $k$-ring neighbors  (in total, this requires defining $4k$ edge marks). We prove convergence with respect to these edge marks, using a second-moment argument in Section~\ref{sec:proof-WS}. In the special case of $k=1$, the limit simplifies to a multi-type branching process:

\begin{corollary}\label{cor-ws} 
    When $k = 1$, the Watts-Strogatz model converges locally in probability to a multi-type branching process with $4$ types.
\end{corollary}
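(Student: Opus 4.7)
The plan is to deduce this corollary directly from Theorem~\ref{main-thm-ws}: once the local limit of $WS(n,\phi,1)$ has been identified with the Full $1$-Fuzz measure $WS(1,\phi)$, the task reduces to recognizing that this measure is the law of a rooted multi-type branching process with exactly four types.

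First I would verify that $WS(1,\phi)$ is almost surely a tree. For $k=1$ the underlying Full $1$-Path is just a bi-infinite path and contains no cycles, and each rewiring attaches a fresh, independent Full or Reduced $1$-Fuzz at the endpoint of the new shortcut. Since these recursively generated subprocesses are mutually independent, no cycle can form at any finite scale. This is precisely the feature distinguishing $k=1$ from $k\geq 2$, where the $k$-ring itself already produces short cycles and the branching-process reduction breaks down.

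Next I would classify every non-root vertex by the type of edge traversed from its parent in this tree: (I) the parent's outgoing ring edge, (II) the parent's incoming ring edge, (III) the parent's outgoing shortcut, or (IV) the parent's incoming shortcut. For each type the offspring law is read off directly from the Fuzz construction. For instance, a type~(I) child has already consumed its incoming ring edge (to the parent), so it produces either one outgoing ring neighbor of type~I (with probability $1-\phi$) or one outgoing shortcut endpoint of type~III (with probability $\phi$), together with an independent $\text{Poi}(\phi)$ number of type~IV incoming shortcut endpoints. A type~(IV) child is the root of a Reduced $1$-Fuzz whose only outgoing ring edge was rewired to the parent, so it produces only a potential type~II backward ring neighbor (with probability $1-\phi$) and $\text{Poi}(\phi)$ type~IV children; types (II) and (III) are entirely analogous, and the root itself has the Full $1$-Fuzz offspring distribution combining all four kinds.

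Beyond the tree property, the argument is a matter of unrolling the recursive Full/Reduced $1$-Fuzz definition, so no additional probabilistic input is needed on top of Theorem~\ref{main-thm-ws}. The one conceptual point to check is that the edge type to the parent really determines the conditional distribution of the subtree below it, and this is immediate from the independence built into the Fuzz construction: conditional on the parent-edge mark, the rest of the neighborhood is generated by a fresh copy of the appropriate (Full or Reduced) process.
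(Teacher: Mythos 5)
Your proposal is correct and takes essentially the same route the paper intends: the paper itself establishes the corollary by observing (in the discussion surrounding the Full $k$-Fuzz definition and the caption of the $k=1$ tree figure) that when $k=1$ both the ring edges and the shortcuts form a cycle-free structure, so the Full $1$-Fuzz measure is a tree law whose vertices are naturally typed by the four edge marks (outgoing/incoming ring, outgoing/incoming shortcut) and whose conditional offspring distributions are read directly off the Full/Reduced $1$-Fuzz recursion. Your unrolling of the offspring laws per type matches the construction, so nothing is missing.
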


\subsection{Kleinberg Model} \label{kleinberg-process} 
In the Kleinberg model \cite{kleinberg}, nodes are arranged on an \(n \times n\) lattice, so there are $n^2$ nodes.
Let node \(u\) have coordinates \((u_x, u_y)\), with the top-left node at \((0,0)\) and first and second coordinates increasing by \(1\) when moving right and down respectively.\footnote{In Section~\ref{proof-main-thm-kl-1}, we will use these coordinates normalized by \(n\) to define a node’s mark, which will be crucial in our analysis.} Then, the lattice distance between nodes \(u\) and \(v\) is
$d(u, v) = |u_x - v_x| + |u_y - v_y|$.
 Each node connects to all nodes within lattice distance \(k\), forming a directed \(k\)-lattice.
In addition, each node $v$ creates $q$ i.i.d. outgoing \emph{shortcuts}, each landing at $u$ with independent probability proportional to $d(u,v)^{-\ell}$, where $\ell$ is the power-law exponent.\footnote{Multi-edges between distinct nodes are allowed, but self-loops are not (the normalization $\sum_{u\neq v} d(u,v)^{-\ell}$ excludes $u=v$). That said, multi-edges do not affect the local limit as we will see next.} The resulting random graph is denoted $K(n,q,k,\ell)$ (see Figure~\ref{kleinberg-model}). Finally, the graph is made undirected by treating all edges as bidirectional.

A key insight from \cite{kleinberg} highlights how the distribution of long-range shortcuts affects the efficiency of decentralized search. Specifically, for \(\ell = 2\), short paths can be found using a simple greedy algorithm, whereas for other values of \(\ell\), no local algorithm can efficiently discover the shortest path (see Section~\ref{sec:kl-app}).  We likewise identify $\ell=2$ as the critical point in the local limit: $K(n,q,k,\ell)$ exhibits a \emph{phase transition} at $\ell=2$, with different limits for $\ell < 2$, $\ell > 2$, and $\ell = 2$.

\begin{figure}
\centering
\begin{tikzpicture}[scale=0.75]
\usetikzlibrary{calc}
\definecolor{blue-shortcut}{rgb}{0.2, 0.2, 0.6}

% --- styles ---
\tikzset{
  sdot/.style={circle,fill=black,inner sep=1pt},
  incoming/.style={thick,teal},
  blue-shortcut/.style={thick,draw=blue-shortcut} % ensure custom color is used
}
% ---------------

\def\step{0.5} % lattice spacing

% grid macro with variable size: \gridN{Prefix}{N}{xshift}{yshift}
\newcommand{\gridN}[4]{%
  \foreach \i in {1,...,#2}{
    \foreach \j in {1,...,#2}{
      \node[sdot] (#1-\i-\j) at (#3+\step*\i, #4-\step*\j) {};
    }
  }
  \foreach \i in {1,...,#2}{
    \foreach \j [count=\k] in {2,...,#2}{
      \draw (#1-\i-\k) -- (#1-\i-\j); % vertical
      \draw (#1-\k-\i) -- (#1-\j-\i); % horizontal
    }
  }
}

% ---------- draw the three grids ----------
% TOP: 7x7
\def\nT{7}
\gridN{T}{\nT}{-1}{0}
\pgfmathtruncatemacro{\cT}{(\nT+1)/2} % = 4 for 7x7

% BOTTOM-LEFT & BOTTOM-RIGHT: 5x5 (unchanged)
\def\nB{5}
\gridN{L}{\nB}{-5}{-3.5}
\gridN{R}{\nB}{ 4}{-3.5}
\pgfmathtruncatemacro{\cB}{(\nB+1)/2} % = 3 for 5x5

% ---------- root ----------
\node[sdot,red] (root) at (T-\cT-\cT) {};

% ---------- labels ----------
% top label ABOVE the top grid (above the middle of the top row)
\node at ($(T-\cT-1)+(0,0.35)$) {\tiny $(q,k)$-patch};

% bottom labels BELOW the grids
\coordinate (Lcenter) at (L-\cB-\cB);
\coordinate (Rcenter) at (R-\cB-\cB);
\node at ($(Lcenter)+(0,-1.35)$) {\tiny Reduced $(q,k)$-patch};
\node at ($(Rcenter)+(0,-1.35)$) {\tiny $(q,k)$-patch};

% ---------- shortcuts ----------
\draw[->, incoming,      >=latex] (Lcenter) -- (root);      % teal incoming
\draw[->, blue-shortcut, >=latex] (root)    -- (Rcenter);   % blue outgoing

% ---------- local outgoing (blue-shortcut) edges around the TOP root ----------
% Curved cardinals to the border (top/bottom/left/right rows/cols)
\draw[<->, violet, out=110, in=250] (T-\cT-\cT) to (T-\cT-2);     % up (row 1)
\draw[<->, violet, out= 20, in=160] (T-\cT-\cT) to (T-6-\cT);   % right (col nT)
\draw[<->, violet, out=-70, in= 70] (T-\cT-\cT) to (T-\cT-6);   % down (row nT)
\draw[<->, violet, out=-160,in= -20] (T-\cT-\cT) to (T-2-\cT);    % left (col 1)

% Straight orthogonals one step from center
\draw[<->, violet] (T-\cT-\cT) -- (T-\cT-\the\numexpr\cT-1\relax);
\draw[<->, violet] (T-\cT-\cT) -- (T-\cT-\the\numexpr\cT+1\relax);
\draw[<->, violet] (T-\cT-\cT) -- (T-\the\numexpr\cT-1\relax-\cT);
\draw[<->, violet] (T-\cT-\cT) -- (T-\the\numexpr\cT+1\relax-\cT);

% Diagonals one step from center
\draw[<->, violet] (T-\cT-\cT) -- (T-\the\numexpr\cT-1\relax-\the\numexpr\cT-1\relax);
\draw[<->, violet] (T-\cT-\cT) -- (T-\the\numexpr\cT+1\relax-\the\numexpr\cT+1\relax);
\draw[<->, violet] (T-\cT-\cT) -- (T-\the\numexpr\cT+1\relax-\the\numexpr\cT-1\relax);
\draw[<->, violet] (T-\cT-\cT) -- (T-\the\numexpr\cT-1\relax-\the\numexpr\cT+1\relax);

\end{tikzpicture}
 \caption{The local limit structure of $K(n, q, k, \ell)$ for $\ell \leq 2$: $(q, k)$-patch (for $k=2$). The outgoing shortcut from the root (shown in red) connects to another $(q, k)$-patch, while the incoming shortcut connects to a reduced $(q, k)$-patch. (For simplicity, we plot the directed edges that only belong to the root and colored edges based on the type.)}
\label{q-k patch}
\end{figure}

\subsection{Subcritical regime: Local Limit of the Kleinberg Model ($\ell < 2$)} \label{local-limit-kl-1}

When $\ell < 2$, shortcuts almost never remain in a node’s local neighborhood. To capture this behavior, we view $K(n,q,k,\ell)$ as a marked graph: each node is marked with its normalized lattice coordinates in $[0,1]^2$ (with the $L_1$ metric). 
The limit object is a recursive structure we call a \emph{$(q,k)$-patch} (see Figure~\ref{q-k patch}), defined as follows:
\begin{enumerate}
    \item \textit{The Base Lattice:} Start from a rooted infinite $k$-lattice.  All nodes in this lattice share the same mark \(m\) sampled uniformly from \([0,1] \times [0,1]\).    
    \item \textit{Outgoing Shortcuts with Marks:} Each node has $q$ outgoing shortcuts. Each shortcut from a node with mark \(m\) connects to the root of a new independent $(q,k)$-patch with a new mark $m_{\mathrm{out}}$ sampled from a distribution on \([0,1] \times [0,1]\) with density
    \[
      p_{\mathrm{out},m}(m_{\mathrm{out}}) = \frac{\|m-m_{\mathrm{out}}\|_1^{-\ell}}{\int_{x \in [0,1]^2} \|m-x\|_1^{-\ell}\, dx}.
    \]
    \item \textit{Incoming Shortcuts with Marks:}  Each node with node mark \(m\) receives $\text{Poi}(\Lambda_m)$ incoming shortcuts,\footnote{When the patch was reached from its parent via an incoming shortcut, that parent edge is part of the parent's $q$ outgoing shortcuts and is not resampled here; the $\mathrm{Poi}(\Lambda_m)$ counts only the additional incoming shortcuts.} with
    \[
      \Lambda_m = q \int_{w \in [0,1]^2} p_{\mathrm{out},w}(m)\, dw.
    \]
    Each such edge comes from the root of a \emph{reduced} $(q,k)$-patch, identical to a $(q,k)$-patch except that the root has only $q-1$ outgoing shortcuts with the missing shortcut chosen uniformly at random from \(q\) possible shortcuts. 
    The incoming mark $m_{\text{in}}$ is sampled from a distribution on \([0,1] \times [0,1]\) with density
    \[
      p_{\text{in},m}(m_{\text{in}}) = \tfrac{q}{\Lambda_m}\,p_{\mathrm{out},m_{\text{in}}}(m).
    \]
    \item \textit{Recursion:} The process repeats independently for all new patches.
\end{enumerate}
We refer to the graph sampled from this $(q,k)$-patch process as $K_{<}(q,k,\ell)$, and the next theorem establishes $K_{<}(q,k,\ell)$ as the local limit of $K(n,q,k,\ell)$ for $\ell < 2$.

\begin{theorem}[Local limit of the Kleinberg model ($\ell < 2$)] \label{main-thm-kl-1} For any integers $k$ and $q>0$, $\ell < 2$, the Kleinberg model $K(n,q,k,\ell)$ converges locally in  probability to  $K_{<}(q,k,\ell)$.
\end{theorem}

The proof is based on showing that shortcuts originating from a node connect, with high probability, to distant and independent regions of the lattice.  Equivalently, we show that the graph induced by shortcuts does not form a cycle (see Lemma~\ref{lem:no-local-shortcut-cycle}).  In the proof, we also rely on a coupling argument that aligns the continuous marks of the shortcuts in the limiting graph with the discrete marks in the finite case, while carefully tracking the first and second moments of subgraph frequencies. See details in Sections~\ref{proof-main-thm-kl-1} and~\ref{sec:proof-main-thm-kl-1}.

\subsection{Supercritical regime: Local Limit of the Kleinberg Model ($\ell > 2$)} \label{local-limit-kl-2}

For \(\ell > 2\), the local limit of the Kleinberg model exhibits a fundamentally different behavior. In this regime, shortcuts are local, connecting nodes within a bounded lattice distance. To formalize this, introduce the normalization constant
\[
\zeta(\ell-1) := \sum_{d=1}^{\infty} d^{\,1-\ell},
\]
which is finite for \(\ell > 2\). The probability of a shortcut connecting a node \(u\) to another node \(v\) at lattice distance \(d(u,v)\) is given by
\[
\kappa(u,v) := \frac{d(u,v)^{-\ell}}{4\,\zeta(\ell-1)}.
\]
Here, $\kappa:\mathbb{Z}^2\times\mathbb{Z}^2\to[0,1]$ is the kernel defining the probability distribution of connections.

The limiting graph \(K_>(q, k, \ell)\) is constructed as follows: Starting with an infinite \(k\)-lattice, each node is assigned \(q\) outgoing shortcuts, where the endpoint of each shortcut is chosen independently according to the kernel \(\kappa\). The following theorem establishes the convergence of the finite Kleinberg model to this limiting structure.

\begin{theorem} [Local limit of the Kleinberg model ($\ell > 2$)] \label{main-thm-kl-2}
For any integers \(k > 0\), \(q > 0\), and \(\ell > 2\), the Kleinberg model \(K(n, q, k, \ell)\) converges locally in probability to the graph \(K_>(q, k, \ell)\). 
\end{theorem}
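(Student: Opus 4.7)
The plan is to verify Definition~\ref{local-convergence-def} by the standard first-moment-plus-second-moment method. Fix $r>0$, $\epsilon>0$ and a rooted graph $H_\ast$; I will show that
\begin{equation*}
\frac{1}{n^2}\sum_{o_n\in V(G_n)}\mathbbm{1}_r^\epsilon\bigl((G_n,o_n)\simeq H_\ast\bigr)\overset{\mathbb{P}}{\longrightarrow}\mu\bigl(\mathbbm{1}_r^\epsilon((G,o)\simeq H_\ast)\bigr)
\end{equation*}
by proving that the expectation converges to the right-hand side and the variance tends to zero. Unlike the $\ell\leq 2$ setting, shortcuts in this regime do not need to encode spatial information: the limit is a translation-invariant random graph on $\mathbb{Z}^2$, so no coordinate marks are required and the whole argument can be carried out through a direct coupling of breadth-first explorations.

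The first step is to reduce to interior roots. A uniformly random $o_n$ lies at lattice distance at least $M$ from the boundary with probability $1-O(M/n)$. For such a root, the finite normalization $C_u^{(n)}=\sum_{v\neq u}[d(u,v)]^{-\ell}$ satisfies $|C_u^{(n)}-C_\infty|=O(M^{2-\ell})$, where $C_\infty=\sum_{v\in\mathbb{Z}^2\setminus\{0\}}[d(0,v)]^{-\ell}$ is finite precisely because $\ell>2$. A single shortcut reaches a point at distance $>D$ with probability $O(D^{2-\ell})$, so iterating this tail bound through a BFS of $B_r(o_n)$ shows that for any $\delta>0$ there is an $R=R(r,k,q,\ell,\delta)$ for which $B_r(G_n,o_n)$ is contained in the lattice ball of radius $R$ around $o_n$ with probability at least $1-\delta$, uniformly in $n$. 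Consequently the exploration never feels the boundary.

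For the first moment I would couple the BFS of $(G_n,o_n)$ with the BFS of $(K(q,k,\ell),o)$ edge by edge. Each outgoing shortcut distribution agrees with its infinite counterpart up to total-variation distance $O(M^{2-\ell})$ by the normalization comparison above; incoming shortcuts are treated as a Bernoulli superposition over potential sources, which is approximately $\mathrm{Poi}(q)$ after discarding far-away sources. On the high-probability event that the exploration stays within $B_R(o_n)$, the number of revealed edges is tight, so chaining finitely many coupling steps yields $\mathbb{E}[\mathbbm{1}_r^\epsilon((G_n,o_n)\simeq H_\ast)]\to\mu(\mathbbm{1}_r^\epsilon((G,o)\simeq H_\ast))$.

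For the second moment I would run the exploration simultaneously from two independent roots $o_n,o_n'$. With probability $1-O(R^2/n^2)$ they are at lattice distance greater than $2R$, so their $R$-balls are disjoint. The only remaining source of dependence is a shortcut bridging the two balls; its expected count is $O\bigl(qR^4\,d(o_n,o_n')^{-\ell}\bigr)$, and averaging over pairs and using $\sum_{d>R} d^{1-\ell}<\infty$ shows this contributes $o(1)$. Conditional on no bridge and on both roots being interior, the two neighborhoods can be coupled with independent copies of the infinite model, so the covariance vanishes and the variance bound follows. I expect the main obstacle to be the incoming-shortcut coupling: the incoming edges at any node are functionals of the random choices made at \emph{all} vertices of $G_n$, so to compare the finite and infinite incoming configurations in total variation one must carefully thin the Bernoulli superposition and combine it with the $\ell>2$ tail estimate to show that the resulting point processes coincide up to $o(1)$ error.
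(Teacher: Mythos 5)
Your proposal is correct in spirit and would yield the theorem, but it routes around the paper's intermediate object and so trades one set of difficulties for another. The paper first proves convergence of \emph{lattice} neighborhoods $L_r(G_n,o_n)\simeq L_r(G,o)$ (Lemmas~\ref{lattice-cv-1} and \ref{lattice-cv-2}) and then transfers to graph neighborhoods by showing $B_K\subseteq L_r$ with high probability via a path-counting lemma with doubly-exponentially growing radii $a^{b^i}$ adapted from \cite{van2021local} (Lemmas~\ref{path counting} and \ref{lattice-to-graph}). This factorization has a hidden payoff: the induced subgraph on a lattice ball depends only on the \emph{outgoing} shortcut decisions of nodes inside that ball; incoming shortcuts from outside simply do not appear as edges of the induced subgraph. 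So the lattice-ball comparison never has to touch the incoming Bernoulli superposition at all, and independence of two disjoint lattice balls is immediate. You work directly with graph balls and BFS, which forces you to couple the incoming configurations explicitly — you correctly flag this as the main obstacle, and your plan of thinning the Bernoulli field and invoking the $\ell>2$ tail bound can be made to work, but it is distinctly messier. Two smaller points: (1) your throwaway remark that the incoming process is ``approximately $\mathrm{Poi}(q)$'' is false in the $\ell>2$ regime — in the limit object the in-degree is a genuine Poisson-binomial (a sum of non-vanishing Bernoullis over $\mathbb{Z}^2$), and the Poisson heuristic that holds for $\ell\le 2$ does not carry over, as the MGF bound in Lemma~\ref{prelim-3} only gives a stochastic \emph{upper} bound by $\mathrm{Poi}(q)$; your later description of the object as a Bernoulli superposition to be thinned is the correct one. (2) Your iterated tail bound for $B_r\subseteq L_R$ is sound because the exploration sizes have bounded expectation uniformly in $n$ (Lemma~\ref{prelim-4}), so a union bound with exponentially growing radii suffices; the paper's doubly-exponential $a^{b^i}$ scaling is more than you need for this model, being inherited from the more general SIRG framework where degree tails are heavier.
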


The first step in the proof is to show that shortcuts in both the finite and limiting graphs remain confined to a bounded lattice distance. 
This ensures that the limit is well-defined and local. 
To establish this, we relate graph neighborhoods to lattice neighborhoods:  for any graph distance \(r\), we prove the existence of a lattice distance \(R\) such that the graph neighborhood of radius \(r\) is contained within the lattice neighborhood of radius \(R\). The key tool is a path-counting argument showing that any path of length $r$, even with shortcuts, cannot escape beyond a bounded lattice distance (Lemma~\ref{path counting}). 
Once we can bound local neighborhoods by lattice neighborhoods, convergence of lattice neighborhoods follows more directly, since no shortcut connections to infinity need to be tracked. 
A second-moment argument shows that neighborhoods of two uniformly chosen nodes become asymptotically independent (Lemmas~\ref{lattice-cv-1} and \ref{lattice-cv-2}).  This is formalized in Section~\ref{proof-main-thm-kl-2}.

The path-counting argument is similar to the techniques developed for spatial inhomogeneous random graphs (SIRGs) in the beautiful paper of \cite{van2021local}. However, some modifications are required to handle the unique features of the Kleinberg model. In the SIRG framework, nodes are embedded in a spatial structure, and edges between any two nodes arise independently with probabilities governed by a kernel function that depends on their spatial distance. By contrast, in the Kleinberg model, the number of shortcuts is a constant  $q$ per node, which changes the way randomness operates in long-range connections.

\subsection{Critical threshold: Local Limit of the Kleinberg Model ($\ell = 2$)} \label{local-limit-kl-critical}

The critical case $\ell=2$ has features of both neighboring regimes.
As in the case $\ell<2$, shortcuts leave every fixed neighborhood with
high probability, so the shortcut structure in the local limit is still
tree-like. But, as in the case $\ell>2$ marks collapse and play no further role in the limit. 
% The reason is that the kernel $\|m-m'\|_1^{-2}$ is not integrable on $[0,1]^2$, so the marked patch construction used for $\ell<2$ does not extend to the boundary case.
As a result, we lose a meaningful distribution for where shortcuts land, and the limit is a graph in which every node has incoming shortcut degree distributed as $\text{Poi}(q)$. This substantially simplifies the local limit at criticality compared to the $\ell<2$ case and leads to simpler limiting properties, such as a simpler expression for the clustering coefficient (see Corollary~\ref{cor2}). 

To formalize, we define the critical limit $K_{=}(q,k)$ as follows:
\begin{enumerate}
    \item \textit{Base lattice:} Start from a rooted infinite
    $k$-lattice.
    \item \textit{Outgoing shortcuts:} Each node has exactly $q$
    outgoing shortcuts, each leading to the root of an independent copy
    of $K_{=}(q,k)$.
    \item \textit{Incoming shortcuts:} Each node receives
    $\mathrm{Poi}(q)$ incoming shortcuts. Each such shortcut
    comes from the root of an independent \textit{reduced} $K_{=}(q,k)$, defined
    in the same way except that the root has only $q-1$ outgoing
    shortcuts, with the missing shortcut chosen uniformly from the $q$
    possible outgoing shortcuts.

    \item \textit{Recursion:} The process repeats independently for all newly created (reduced) $K_{=}(q,k)$'s.

\end{enumerate}
One can also interpret $K_{=}(q,k)$ as an \textit{unmarked} and \textit{symmetric} $(q, k)$-\textit{patch}. Our next result establishes that this is indeed the local limit when $\ell=2$.

\begin{theorem}[Local limit of the Kleinberg model at criticality ($\ell = 2$)] \label{main-thm-kl-critical} For any integers $k$ and $q>0$, $\ell = 2$, the Kleinberg model $K(n,q,k,\ell)$ converges locally in  probability to  $K_{=}(q,k)$.
\end{theorem}

The main ingredient for the proof of Theorem~\ref{main-thm-kl-critical} is showing that asymptotically the local neighborhood of any node looks the same almost everywhere in the grid (see Lemmas~\ref{lem:uniform-normalization}, \ref{lem:boundary-avoidance}, and \ref{lem:critical-poisson-input}), making the marks have no intrinsic meaning and distribution for where shortcuts land irrelevant. Having established this, the rest of the argument follows from similar steps to the proofs of convergence for $\ell < 2$ and $\ell > 2$ cases. See details in Section~\ref{proof-main-thm-kl-critical}. 
A direct corollary of the proof of Theorem~\ref{main-thm-kl-1} and~\ref{main-thm-kl-critical} is the following.
\begin{corollary}[Hidden random tree] \label{hidden-tree}
For $\ell \leq 2$, the subgraphs induced by the shortcuts in
$K(n,q,k,\ell)$ converge locally in probability to a multitype branching
process.\footnote{At $\ell=2$, the four types collapse to a single one and, hence, the limit is a single-type branching process.}
\end{corollary}

Taken together, Theorems ~\ref{main-thm-kl-1}-~\ref{main-thm-kl-critical} provide a unified characterization of the local geometry of small-world networks.  Across all regimes, the local neighborhood of a typical node converges to a well-defined limiting object. While the structure differs, the key common feature is that dependencies vanish at finite scales. This locality enables analysis: any graph functional or dynamical process depending only on bounded neighborhoods can be studied directly on the limit. We now leverage this perspective to analyze several such quantities.

\section{Applications}\label{sec-algorithmic-app}
By proving local convergence for small-world networks, we gain powerful tools to analyze a wide range of network properties. Below, we outline several key applications of Theorems~\ref{main-thm-ws}-\ref{main-thm-kl-critical}. 

% \subsection*{Local Functionals and Sampling-Based Estimation.}
% We first state a general consequence of local weak convergence and then consider different applications.
% For this purpose, call a function $f$ \emph{$r$-local} if $f(G,o)$ depends only on $B_r(G,o)$ (including marks).

% \begin{corollary} [Local Limits and Constant-time Estimation]\label{prop:meta}
% Let $\{G_n\}_{n\geq 0}$ converge locally in probability to $\mathcal G_* \sim \mu$ and let $f$ be a $r$-local and bounded function.
% In particular, for $\theta(\mu):=\mathbb{E}_{G\sim \mu}[f(B_r(G,o))]$,
% the estimator $\hat\theta_m:=\frac1m\sum_{i=1}^m f(B_r(G,o_i))$ from $m$ i.i.d.\ uniform roots satisfies
% $|\hat\theta_m-\theta(\mu)|\le \varepsilon$ with probability $1-\delta$ for $m=O(\varepsilon^{-2}\log(1/\delta))$.
% \end{corollary}
% This is a direct consequence of local convergence (and a restatement of \cite[Theorem~2.17]{remco}).
% This proposition is our main ``engine" for applications: any local parameter or algorithm whose outcome can be approximated by constant-radius neighborhoods
% automatically inherits convergence and admits constant-time estimators. Conversely, whenever a graph functional converges along a locally convergent sequence, its value can be approximated by constant-radius samples from the underlying graph.
% In the remainder of this section, we instantiate this template for several graph properties.

\subsection{Dynamic Processes on Small-World Networks}
Local convergence is a powerful tool for understanding dynamic processes on networks, such as the spread of information or diseases. One key application is understanding the spread of epidemics on networks.

In the context of the SIR (Susceptible-Infected-Recovered) model, individuals are represented as nodes of a graph, where edges indicate possible transmission \cite{bernoulli1766essai, kermack1927contribution, ross1917pathometry3}. Each node is initially infected independently with probability $\rho>0$ and is otherwise susceptible.
An infected node remains infectious for a random amount of time and then recovers with rate $\mu$. During its infectious period, it attempts to infect each neighbor at rate $\beta$. If such an attempt occurs when the neighbor is still susceptible, the neighbor becomes infected and the process continues from there.

% A special case with a constant time infectious period is the \textbf{Independent Cascade (IC)} model, in which each infected node infects each of its neighbors $(u\!\to\! v)$ independently with probability $p$; see \cite{kempe2003kdd,chen2009kdd,borgs2014soda}.

It is shown in the literature that when a sequence of  graphs converges locally in probability  then 
the time evolution of the epidemic (the fractions of nodes in each state) converges as the network size increases \cite{alimohammadi2023epidemic, milewska2025sirlocallyconvergingdynamic}. Moreover, it enables the use of local network samples of a constant number of nodes to predict the spread of epidemics \cite{alimohammadi2022algorithms}. Using our result, these insights can be applied to small-world networks as well.  

\begin{corollary}[Epidemic Spread]\label{cor:sir-finite}
Fix $\rho,\beta,\mu$. If $\{G_n\}_{n\geq0}$  converges locally in probability to $G\sim \mu$, then for any $t\in[0,\infty]$
\[
\mathbb{E} \left[X_t^{\mathrm{SIR}}\!\big(G_n,o_n\big) \right] \longrightarrow\ \mathbb{E}  \left[X_t^{\mathrm{SIR}}(G,o)
\right] \]
where $X_t^{\mathrm{SIR}}(G,o)$ is the indicator that $o$ is infected by time $t$.
Moreover, for every $\varepsilon,\delta\in(0,1)$ there exist constants $R=R(\beta, \varepsilon)$ and $m=O(\varepsilon^{-2}\log(1/\delta))$ such that
\[
\hat\theta_t \;=\; \frac1m\sum_{i=1}^m X_t^{\mathrm{SIR}}\!\big(B_R(G_n,o_i)\big)
\]
(using $m$ i.i.d.\ uniform roots $o_i$ and only radius–$R$ neighborhoods) is an $(\varepsilon,\delta)$–accurate estimator of the fraction infected by time $t$.
\end{corollary}

The key message is that to understand epidemic behavior on large finite networks it suffices to analyze the corresponding local limit, which is a simpler object to work with.  In $WS(\phi,k)$, increasing $\phi$ creates tree–like shortcut structure and accelerates early–time spread (smaller return times, larger finite–horizon prevalence), while the final size is a function of $(\rho,\beta,\mu,k,\phi)$ computed on the limit object. For $K(n,q,k,\ell)$, when $\ell\le 2$ the $(q,k)$–patch induces branching–like expansion (larger early growth and, typically, larger final size), whereas for $\ell>2$ the $\kappa$–kernel lattice behaves closer to a bounded–range lattice and spread resembles lattice–like propagation.

\subsection{Edge Percolation and the Size of the Giant}
\label{sec:app-giant-percolation-kleinberg} 
Another natural question is the robustness of small-world structure in preserving \emph{global} connectivity under random edge failures. A widely studied way to probe this is bond percolation: retain each edge of graph $G$ independently with probability $p\in(0,1)$ and study the resulting connected components in resulting graph $G^{(p)}$.  Writing $\mathcal C^{(p)}(v)$ for the connected component of $v$ in $G^{(p)}$, the main object of interest is the largest component $\mathcal C_{\max}(G^{(p)})$ (and, for comparison, the second largest component $\mathcal C_{(2)}(G^{(p)})$).

When a graph sequence converges locally, the percolated local limit describes the law of $\mathcal C^{(p)}(o_n)$ for a uniform root $o_n$. However, the density of the largest component is a global graph property, and does not usually follow from local convergence alone. One therefore typically requires additional structural assumptions; for example, the giant component is known to be local for random regular graphs, Erd\H{o}s--R\'enyi graphs, and, more generally, for expander sequences \cite{yeganeh-digraph, vanderhofstad2023giant, van_der_Hofstad_2025}. Recently, van der Hofstad~\cite{vanderhofstad2023giant} identified a necessary and sufficient additional condition under which the giant becomes local.  Informally, the condition states that large percolation clusters must typically \emph{coalesce}: two nodes that both lie in percolation clusters of size at least $k$ should be connected to each other with high probability once $k$ is large. In the next theorem we verify this condition for the Kleinberg model, and hence conclude that the percolated giant is determined by the local limit.

\begin{theorem}[The Giant Component under Bond Percolation in the Kleinberg Model]
\label{thm:kleinberg-perc-giant}
Fix $q > 0$, and $k\ge 1$. Let $G_n = K(n,q,k,\ell)$ be the Kleinberg graph, and let $G_n^{(p)}$ denote its bond percolation. Assume either $\ell < 2$ and $p\in[0,1]$, or $\ell>2$ and $p\in(1/2,1]$. Let $(G,o)$ be the local limit of $(G_n,o_n)$ as defined in Theorems~\ref{main-thm-kl-1}-\ref{main-thm-kl-critical}, and write $G^{(p)}$ for its bond percolation. Define $\theta(p) := \mathbb P\!\left[|\mathcal C_{G^{(p)}}(o)|=\infty\right]$. Then,
\[
\frac{|\mathcal C_{\max}(G_n^{(p)})|}{n^2} \xrightarrow{\mathbb P} \theta(p),
\qquad
\frac{|\mathcal C_{(2)}(G_n^{(p)})|}{n^2} \xrightarrow{\mathbb P} 0 .
\]
\end{theorem}

Theorem~\ref{thm:kleinberg-perc-giant} shows that, despite being a global connectivity property, the size of the giant component after percolation is determined by the local structure of the network.  When $\ell < 2$, the Kleinberg model exhibits expansion properties with high probability \cite{flaxman}; in this case, the conclusion follows directly by combining expansion with existing results showing that the giant component under bond percolation is local for expander sequences~\cite{yeganeh-digraph}. For $\ell>2$, the proof proceeds by verifying the condition of van der Hofstad~\cite{vanderhofstad2023giant}. We leverage the fact that, on $\mathbb Z^2$, the size of the giant component is local \cite{borgs2000lattice}.  Consequently, for the supercritical regime $p>1/2$, the condition of~\cite{vanderhofstad2023giant} holds for the Kleinberg graph as well. See the proof details in Section~\ref{proof-applications-epidemic}. 

The result also admits a natural interpretation in terms of information cascades: in the Independent Cascade model, where each edge transmits influence with probability $p$ \cite{borgs2014soda, chen2009kdd, kempe2003kdd}, the set of nodes eventually influenced from a single initial seed coincides with the connected component of that seed in $G^{(p)}$. As a result, in the regimes covered by Theorem~\ref{thm:kleinberg-perc-giant}, the probability and typical size of a large cascade are determined by the percolated local limit, and can be inferred from local neighborhoods of the finite graph.

% A special case with a constant time infectious period is the \textbf{Independent Cascade (IC)} model, in which each infected node infects each of its neighbors $(u\!\to\! v)$ independently with probability $p$; see \cite{kempe2003kdd,chen2009kdd,borgs2014soda}.

\subsection{Clustering Coefficient}
Next, we turn our attention to estimating common graph properties that have received attention in the literature.
High clustering coefficient is one of the defining signatures of the small-world phenomenon: the original Watts-Strogatz model was introduced precisely to capture the coexistence of short path lengths and substantial local triangle structure \cite{watts}, and clustering coefficients remain the standard way to quantify this local transitivity in network models. It is therefore natural to begin our applications to estimating graph structures with clustering coefficient.

The local clustering coefficient $C(v)$ of a node $v$ measures the fraction of ordered pairs of neighbors of $v$ that are adjacent, while the global clustering coefficient $C_{\mathrm{global}}$ aggregates the same quantity over the whole graph:
\[
C(v)=\frac{\Delta_G(v)}{d_v(d_v-1)}, \qquad
C_{\mathrm{global}}=\frac{\sum_{v\in V}\Delta_G(v)}{\sum_{v\in V} d_v(d_v-1)}.
\]
Here, $d_v$ denotes the degree of node $v$, and
\[
\Delta_G(v)=\sum_{u,w\in \partial B_1(v)} 1_{\{\{u,w\}\in E(G)\}}
\]
denotes twice the number of triangles in $G$ that contain $v$. We also define the average local clustering coefficient by
\[
C_{\mathrm{local}}:=\frac{1}{|V|}\sum_{v\in V} C(v).
\]

These quantities are especially well-suited to the local convergence framework. Indeed, $C(v)$ is determined by the radius-$1$ neighborhood of $v$, while $C_{\mathrm{global}}$ is obtained from the corresponding empirical averages. Hence, \cite[Theorems 2.22 and 2.23]{remco} identify the asymptotic behavior of $C_{\mathrm{global}}$ and $C_{\mathrm{local}}$ along locally convergent graph sequences. In the present models, the computation simplifies further because shortcuts are asymptotically locally tree-like: in the Watts--Strogatz model, and in the Kleinberg model for $\ell \leq 2$, shortcut edges do not belong to triangles in the limit (see  Lemmas~\ref{no cycle} and ~\ref{lem:no-local-shortcut-cycle}). Thus the limiting clustering is determined by the base ring or lattice structure together with the limiting degree distribution.

\begin{corollary} [Clustering Coefficient of Watts-Strogatz Model] \label{cor1}
Let \(v\) be a uniformly selected node of \(WS(n, \phi, k)\). Then, as \(n \to \infty\), 
\[
C_{\mathrm{local}} \xrightarrow{\mathbb{P}} \mathbb{E}_{\mu}\left[\frac{\Delta_G(o)}{d_o(d_o - 1)}\right],
\qquad C_{\mathrm{global}} \xrightarrow{\mathbb{P}} \frac{3(k-1)}{2(2k-1) + \phi (2 - \phi)} (1-\phi)^3,\]
where \(\Delta_G(o)\) is twice the number of triangles that contain the root \(o\) and \(\mu\) is the distribution specified in Theorem~\ref{main-thm-ws}.
\end{corollary}

For the Kleinberg model in the regimes $\ell \leq 2$, let $\Delta_{L_k}$ denote twice the number of triangles that include the root in the infinite $k$-lattice. Since the infinite $k$-lattice is deterministic, $\Delta_{L_k}$ is also deterministic. The next corollary identifies the limiting clustering coefficients in these regimes.

% For the analog in the Kleinberg model, let \(\Delta_{L_k}\) denote twice the number of triangles that include the root in the infinite \(k\)-lattice. Since the infinite \(k\)-lattice is deterministic, \(\Delta_{L_k}\) is also deterministic. Then the following result formalizes the behavior of the clustering coefficient in the limit.
\begin{corollary}[Clustering Coefficient of the Kleinberg Model] \label{cor2}
Let \(v\) be a uniformly selected node of \(K(n,q,k,\ell)\), and set \(A:=2k(k+1)+q\).

If \(\ell<2\), then as \(n\to\infty\),
\[C_{\mathrm{local}} \xrightarrow{\mathbb P}
\mathbb{E}\!\left[\frac{\Delta_{L_k}}{(A+X_m)(A+X_m-1)}\right], \qquad C_{\mathrm{global}}\xrightarrow{\mathbb P}\ \frac{\Delta_{L_k}}{A^2 - A + 2A\,\mathbb E[\Lambda_m] + \mathbb E[\Lambda_m^2]},
\]
where, conditional on the mark \(m\) of \(v\), the random variable \(X_m\) is Poisson with mean \(\Lambda_m\), and the expectations are over the uniformly sampled mark \(m\in [0,1]\times[0,1]\).

For \(\ell=2\), since \(\Lambda_m=q\) for all \(m\), if \(X\sim \mathrm{Poi}(q)\), then
\[
C_{\mathrm{local}} \xrightarrow{\mathbb P}
\mathbb{E}\!\left[\frac{\Delta_{L_k}}{(A+X)(A+X-1)}\right], \qquad C_{\mathrm{global}}\xrightarrow{\mathbb P}\ \frac{\Delta_{L_k}}{(A+q)^2-A}.
\]
\end{corollary}

These results provide the first rigorous characterization of clustering in small-world networks, previously understood only through \emph{mean-field} analyzes~\cite{barrat2000properties}. Proofs of both corollaries appear in Appendix~\ref{proof-applications-cluster}. In particular, \cite{barrat2000properties} approximate the global clustering coefficient of the WS model by
$\frac{3(k-1)}{2(2k-1)}(1-\phi)^3,$
interpreting it as the ratio between the mean number of edges among the neighbors of a node and the expected number of possible edges among those neighbors. They justify this approximation numerically, showing agreement with simulations for large \(n\). In contrast, our result identifies the exact asymptotic limit, explaining rigorously why the mean-field prediction matches observed behavior.

\subsection{PageRank}
Another property of interest is PageRank, which is an algorithm used to rank the importance or centrality of nodes in a graph, originally designed to rank World Wide Web pages  \cite{page1999pagerank}. Its applications extend to community detection, citation analysis, and beyond \cite{page1999pagerank}. Garavaglia et al. \cite{garavaglia2018local} proved that if a sequence of graphs $G_n$ with $n$ nodes converges in probability, then the PageRank of a uniform random node converges in distribution. As an application of this and our main theorems, we get the following corollary.

\begin{corollary}
    The normalized PageRank of $WS(n, \phi, k)$ and $K(n, q, k, \ell)$ converges in distribution to a limit. 
\end{corollary}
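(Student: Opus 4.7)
The plan is to deduce this corollary as a direct application of \cite{garavaglia2018local}, which states that local convergence in probability of a sequence of marked graphs implies convergence in distribution of the normalized PageRank of a uniformly chosen node, provided a mild uniform integrability condition on the degree distribution holds (so that the PageRank series, truncated over random walks of growing length, converges uniformly along the sequence). Theorems~\ref{main-thm-ws}, \ref{main-thm-kl-1}, and \ref{main-thm-kl-2} already supply the local-convergence input, so the entire task reduces to verifying that this side condition holds for each model.

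First I would handle the Watts-Strogatz case. Every node in $WS(n,\phi,k)$ has degree equal to $2k$ minus the number of its outgoing ring-edges that were rewired away plus the number of incoming shortcuts it receives. The former is at most $k$, while the latter is a sum of $n-1$ independent Bernoulli random variables with success probability $\phi k/n$, hence stochastically dominated by a Binomial converging to $\mathrm{Poi}(k\phi)$. This yields uniform-in-$n$ exponential tail bounds on the degree of a uniform root, so every polynomial moment of the degree is uniformly bounded. In particular the condition of \cite{garavaglia2018local} (uniform integrability of any fixed power of the root degree is sufficient) is satisfied, and the cited theorem together with Theorem~\ref{main-thm-ws} gives convergence in distribution of the normalized PageRank.

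For the Kleinberg model I would argue analogously. Each node has a deterministic lattice degree of $2k(k+1)$ and $q$ outgoing shortcuts; the only random contribution to the degree of a uniform root $o$ with normalized mark $m$ is the number of incoming shortcuts, which is a sum of independent indicators with mean $\Lambda_m^{(n)}$ converging to $\Lambda_m$ (and $\Lambda_m$ is integrable over $m\in[0,1]^2$ both when $\ell\le 2$ and, by the finiteness of $\zeta(\ell)$, when $\ell>2$). This again produces uniform exponential moments of the root degree. Combined with Theorem~\ref{main-thm-kl-1} for $\ell\le 2$ and Theorem~\ref{main-thm-kl-2} for $\ell>2$, the PageRank convergence result of \cite{garavaglia2018local} applies and yields the claimed distributional limit.

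The main obstacle, and really the only non-routine point, is verifying the moment/uniform integrability hypothesis of \cite{garavaglia2018local}, since local convergence alone does not automatically transfer continuous functionals such as PageRank. Once uniform control of degree tails is in hand (which, as sketched above, follows from direct tail estimates on sums of independent Bernoulli variables in both models), the rest is an immediate appeal to the cited theorem, so no delicate analysis of the limiting PageRank distribution itself is required.
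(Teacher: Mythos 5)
Your proposal takes essentially the same route as the paper: cite the Garavaglia--van der Hofstad--Litvak result together with Theorems~\ref{main-thm-ws}, \ref{main-thm-kl-1}, and \ref{main-thm-kl-2} and conclude. The paper in fact gives no separate proof of this corollary at all — it simply records the GHL theorem as requiring local convergence in probability and nothing more — whereas you additionally verify a uniform-integrability side condition via exponential tail bounds on root in-degrees (Bernoulli/Poisson tails for Watts--Strogatz, Poisson tails governed by $\Lambda_m$ and $\zeta(\ell)$ for Kleinberg). That extra verification is correct and, if anything, more cautious than the paper; it does not change the method.
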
 

Importantly, as a consequence of this result, the PageRank of a typical node
can be approximated from a sufficiently large finite neighborhood of the root
in the local limit.
In fact, we can use this result to interpret pagerank behavior asymptotically.
In $WS(\phi,k)$, increasing $\phi$ lowers short-horizon return probabilities and reduces PageRank at typical nodes.
In $K(n,q,k,\ell)$, the (marked) $(q,k)$-patch for $\ell\le 2$ yields smaller return probabilities (lower PageRank), while $\kappa$-kernel with short-range in expectation at $\ell>2$ yields PageRank values closer to the base lattice.

\subsection{Spanning Trees and Tree Entropy}
The number of spanning trees $\tau(G)$ is a classical property of graphs, linking the structure to random walks and electrical flows \cite{lyons2017probability}. Lyons \cite{Lyons2005} showed that under local convergence, the normalized logarithm $\tfrac{1}{n}\log\tau(G)$ converges to the \emph{tree entropy}, a quantity expressed in terms of random-walk return probabilities on the limit object, i.e.,  \cite[Theorem.~3.3]{Lyons2005} shows
\[
\frac{1}{|V(G_n)|}\log \tau(G_n)\ \longrightarrow\ h(G)
=\mathbb{E}[\log \deg_{G}(o)]-\sum_{t\ge 1}\frac{1}{t}\,\mathbb{E}[p_t(o,o)],
\]
where $p_t(o,o)$ is the $t$-step return probability of simple random walk on $G$. Each \( p_t(o,o) \) is \( t \)-local, so the series \( \sum_{t \ge 1} \frac{1}{t}\,\mathbb{E}[p_t(o,o)] \) consists of local functions; in particular, it can be approximated to arbitrary precision by truncating the sum at a finite horizon. We can apply our main theorems to this, and get the following.

\begin{corollary}[Tree Entropy for Small-World Networks]\label{cor:tree}
Let $G_n\in\{WS(n,\phi,k),\,K(n,q,k,\ell)\}$ with local limit $G\in\{WS(\phi,k),\,K_{<}(q,k,\ell),\,K_{=}(q,k),\,K_{>}(q,k,\ell)\} $.
Then $\tfrac{1}{|V(G_n)|}\log \tau(G_n)\to h(G)$ as above.
\end{corollary}

We can also use this result to compare the behavior of different small-world models. In $WS(\phi,k)$ (the $k$-Fuzz limit), shortcuts make neighborhoods more tree-like as $\phi$ increases, lowering return probabilities and increasing tree entropy. 
In $K(n,q,k,\ell)$, for $\ell\le 2$, the $(q,k)$-patch induces expansion (higher tree entropy); for $\ell>2$ the $\kappa$-kernel lattice behaves closer to the base lattice, yielding larger return probabilities and lower tree entropy.

\subsection{Greedy Maximal Independent Set}

Maximal independent sets are another fundamental  substructure in graphs, and the random greedy maximal independent set (MIS) process (also known as random sequential adsorption (RSA)  \cite{flory}) is a natural way to construct one: scan the nodes in a uniformly random order and add each node if none of its neighbors were added earlier. It is a fundamental local algorithm, studied in combinatorics  \cite{Nicholas}, probability \cite{Rahman}, computer science \cite{Fischer} and chemistry \cite{flory}. 

This process is especially natural in the present framework. Although the membership of a given node in the final MIS is not determined by a fixed-radius neighborhood, its asymptotic density is governed by the local limit. In particular, \cite{krivelevich2024rsa} proved that, for locally convergent bounded-degree graph sequences, the indicator that the root belongs to the random greedy MIS converges in distribution. Applying this perspective to the local limits established in this paper yields the following corollary.

\begin{corollary}[Random Greedy MIS for Small-World Networks]
Let $G_n\!\in\!\{WS(n,\phi,k),K(n,q,k,\ell)\}$ with local limit $(G, o)$.  
Then,
\[
\frac{|\mathrm{MIS}(G_n,\pi_n)|}{|V(G_n)|}\xrightarrow{d}\mathbb{E}\!\left[\mathbbm{1}\{o \in\mathrm{MIS}(G,\pi)\}\right],
\]
where $\xrightarrow{d}$ denotes convergence in distribution.
\end{corollary}
% \noindent\textbf{Algorithmic interpretation.}
This result has direct implications for randomized sublinear-time approximation algorithms on arbitrarily large finite graphs. One can estimate the greedy MIS density by exploring neighborhoods up to a fixed radius $r$, applying the MIS local oracle defined in  \cite{yoshida}---treating nodes beyond distance $r$ for the oracle as included in the MIS---and averaging the outcome over sufficiently many uniformly chosen roots and random permutations.

\subsection{Phase Transition and Kleinberg's Local Algorithm}\label{sec:kl-app}

Kleinberg considered a simple greedy algorithm to find a path from the source to the target. This algorithm starts from the source node and, at each step, it moves to the neighbor that is closest to the target destination until the target is reached \cite{kleinberg}. In particular, Kleinberg showed that this local algorithm is most efficient when \( \ell = 2 \): while we reach the target in \( O\left((\log n)^2\right) \) when $\ell = 2$, the expected time is significantly larger ($n^\alpha$ for some $\alpha>0$) for other values of $\ell$.

Our results in Theorems~\ref{main-thm-kl-1},~\ref{main-thm-kl-2}, and \ref{main-thm-kl-critical} provide an alternative explanation for this phenomenon. 
As the power-law exponent \(\ell\) crosses the threshold of $2$, the network's local structure undergoes significant changes. When \(\ell > 2\), the local limit reveals that all node marks are equal within a neighborhood, indicating that shortcuts only connect nodes within very short distances. This structural change explains why Kleinberg’s decentralized search algorithm becomes inefficient in this regime: the shortcuts fail to support long-range navigation.

Conversely, when $\ell \leq 2$, the graph induced by shortcuts converges to a tree-like structure, ensuring strong global connectivity. However, the regime of $\ell < 2$ has another challenge: the distance between the two ends of a shortcut is bounded below by $\Omega(n)$ (Lemma~\ref{shortcut-length-distribution}). Thus, once the decentralized search algorithm gets suitably close to the target node, all shortcuts will connect further away from the target, making the shortcuts ineffective.

The case of $\ell = 2$ is unique. Here, shortcuts can connect nodes with both different and identical marks. As established in Lemma~\ref{shortcut-length-distribution}, the distance $d$ between the two ends of a shortcut satisfies $\log(d) = \Omega(\log(n))$,  allowing for sublinear distances as well. Note that the marks of two nodes with $o(n)$ distance converge to the same value in the limit. Consequently, at $\ell = 2$, the shortcuts are effective not only when the target is far (i.e. when the mark of the target is different from the source) but also when it is near (i.e. when the mark of the target is the same as the source), leading to an optimal scenario for a decentralized search algorithm. 

\section{Other Related Work}

Small-world networks have been extensively studied due to their unique properties. \cite{fraigniaud2010searchability,nguyen2005analyzing} extended Kleinberg’s model to arbitrary base graphs, showing that the structure of the base graph, combined with long-range connections, influences the performance of greedy algorithms in decentralized search. 
Other studies confirm how random shortcuts or clustering reshape network distances and connectivity \cite{benjamini2001diameter,bollobas2011sparse, coppersmith2002diameter}. 
Other lines of work study random walks, search algorithms, and routing in small-world networks \cite{dietzfelbinger2009tight, dyer2020random, fraigniaud2010searchability, giakkoupis2011optimal, mehrabian2016randomized, nguyen2005analyzing}. 
Our results complement prior work by establishing the local convergence of small-world networks, allowing for a unified analysis of both local structures and dynamic properties in this class of networks. Note that, although we treat rings and regular lattices, we expect analogous local-limit descriptions to hold for other base graphs (e.g., higher-dimensional lattices). By proving the local limits of small-world models, we open new avenues for rigorous analysis of network properties and dynamics using local convergence techniques.

\subsection*{Local Convergence of Random Graph Models}
Notably, there is a large body of work proving that random graph models converge locally in probability. The class of random network models with local limit in probability includes sparse inhomogeneous random graphs (including stochastic block models) \cite[Chapter 3]{BolJanRio07, remco}),  configuration models, \cite[Chapter 4]{DemMon10b, remco}, 
preferential attachment models \cite[Chapter 5]{BerBorChaSab14, GarHazHofRay22, remco}), random intersection graphs \cite{Kura15, HofKomVad18}, and spatial inhomogeneous random graphs  \cite{van2021local}. The latter includes hyperbolic random graphs \cite{KomLod20, KriPapKitVahBog10}. 
Another related work is \cite{bollobas2011sparse}, where they introduce a class of sparse random graphs with high local clustering that converge locally in probability. However, as noted in their paper, this model does not apply to small-world networks, as the mathematical intractability arises from the dependence between edges.

To the best of our knowledge, we are the first to establish the local convergence of small-world networks.  The closest related work is by \cite{barbour2006discrete}, who studied the distance of typical paths in the Watts-Strogatz model and described a branching process governing the structure of shortcuts. While they did not prove local convergence in probability, their work inspired our development of the  $k$-Fuzz structure, which fully characterizes the limit graph of the Watts-Strogatz model and enables us to rigorously prove local convergence in probability. For the Kleinberg model, we are not aware of any prior work that addresses local convergence or provides a comprehensive analysis of its local structure. We additionally note that our results can easily be extended to analyze the Kleinberg model where the base lattice is higher-dimensional ($d >2$) or has toroidal boundary conditions.

Finally, we note that the study of local graph properties and the convergence of local functionals—such as those discussed in this paper, including spectral distributions, the size of independent sets, and other network dynamics \cite{gamarnik2006maximum, ganguly2022hydrodynamic, garavaglia2018local, lacker2019local}—remains an active area of research. Any future results demonstrating that a property is local for convergent graph sequences can be directly applied to small-world network models using the results established in this work.

\section{Preliminaries} \label{prelims}

We begin by introducing some standard asymptotic notation. For a function \(f(n)\) depending on \(n\), we write \(f(n) = \omega(g(n))\) if \(\lim_{n \to \infty} \frac{f(n)}{g(n)} = \infty\);  \(f(n) = o(g(n))\) if \(\lim_{n \to \infty} \frac{f(n)}{g(n)} = 0\); \(f(n) = \Omega(g(n))\) if \(\lim_{n \to \infty} \frac{f(n)}{g(n)} \geq c > 0\); \(f(n) = O(g(n))\) if \(\lim_{n \to \infty} \frac{f(n)}{g(n)} \leq C > 0\). 
In addition, whenever we write \(O_{a_1,\dots,a_m}(1)\), we mean a quantity bounded in absolute value by a constant that may depend on the fixed parameters \(a_1,\dots,a_m\), but is independent of \(n\). These notations will allow us to describe the growth behavior of functions on graphs as the number of nodes grows.

The Benjamini--Schramm notion of local convergence remains meaningful even when degrees are unbounded, provided the expected degree of each node is uniformly bounded, as observed, for example, by Lyons~\cite{Lyons2005} and others.
Therefore, we first show that, for $WS(n,\phi,k)$, $WS(\phi,k)$, $K_<(n,q,k,\ell)$, $K_{=}(q,k)$, and $K_{>}(q,k,\ell)$, the expected degree of every node is bounded. This establishes \textit{local finiteness} and allows us to apply the theory of local convergence. We begin with the Watts--Strogatz model.

\begin{lemma}[Local finiteness of the WS model and its limit object] \label{prelim-1}
   For any integers, $n\geq k$, and any $\phi\in[0,1]$,  $WS(n, \phi, k)$ and $WS(\phi, k)$ are  locally finite.
\end{lemma}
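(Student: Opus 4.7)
The plan is to handle the two cases separately, exploiting the fact that each degree contribution is either deterministically bounded or almost surely finite as a Poisson random variable.

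For the finite Watts-Strogatz graph $WS(n, \phi, k)$, local finiteness is immediate: any vertex has degree at most $n-1$, a finite deterministic bound, so $\mathbb{P}[d < \infty] = 1$ holds trivially. No probabilistic argument is needed. I would dispatch this case in a single sentence.

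For the infinite limit $WS(\phi, k)$, I would analyze the degree of an arbitrary vertex by decomposing its incident edges according to the recursive Full $k$-Fuzz construction of Section~\ref{local-limit-ws}. There are three contributions: (i) the $k$ outgoing ring edges, each of which is either kept as a ring edge or rewired into a shortcut, contributing exactly $k$ to the degree; (ii) the incoming ring edges from the backbone $k$-path, of which there are at most $k$ (some may have been rewired away by their originating nodes, but none can be added); and (iii) the incoming shortcuts, whose count is $\mathrm{Poi}(k\phi)$ by construction. The total degree of the vertex is thus bounded above by $2k + X$ with $X \sim \mathrm{Poi}(k\phi)$, and since a Poisson random variable is finite almost surely, the vertex has finite degree almost surely.

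To upgrade this pointwise finiteness to a statement about the whole random graph, I would observe that the vertex set of $WS(\phi, k)$ is countable: the backbone $k$-path of the root is countably infinite, each of its vertices spawns almost surely finitely many shortcut children (bounded by $k + \mathrm{Poi}(k\phi)$), and each shortcut recursively attaches another Full or Reduced $k$-Fuzz whose backbone is again countable. Induction on recursion depth yields a countable vertex set, and a union bound over these countably many vertices gives that almost surely every vertex has finite degree. The main (minor) obstacle is the bookkeeping: matching up the three degree contributions with the recursive construction and verifying that the Poisson counts at distinct vertices are independent so the union bound is valid. There is no real technical difficulty, since the almost-sure finiteness of a Poisson variable provides everything needed.
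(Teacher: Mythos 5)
Your proposal is correct and follows essentially the same route as the paper: both arguments rest on the observation that a vertex's degree in $WS(\phi,k)$ is dominated by $2k$ plus a $\mathrm{Poi}(k\phi)$ count of incoming shortcuts, hence is almost surely finite (the paper phrases this via the expected degree being at most $3k$ and Markov's inequality, while you exhibit the a.s.-finite stochastic bound directly). Your additional remarks about the trivial $n-1$ bound for the finite graph and the union bound over a countable vertex set are harmless embellishments, but they are not needed under the paper's pointwise definition of local finiteness.
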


\begin{proof}
The expected degree of any node in both models is bounded uniformly by $3k$. 
\end{proof}

Next, we will consider the Kleinberg model. It will be useful to establish two notions of distance: we will use $d(u, v)$ to denote the lattice distance between nodes, i.e. $d(u, v) = |x_u - x_v| + |y_u - y_v|$, and $d_G(u, v)$ to denote the graph distance between the nodes in graph $G$.

\begin{lemma}[Local finiteness of the Kleinberg model] \label{prelim-2} Let $K(n,q,k,\ell)$  be defined as in Section~\ref{kleinberg-process}. Then
$K(n,q,k,\ell)$ is locally finite. 
\end{lemma}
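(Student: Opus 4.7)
The plan is to argue that for each fixed $n$, the graph $K(n,q,k,\ell)$ lives on the vertex set of size $n^2$, so any vertex has degree at most $n^2-1$ (counting multiplicities in the multigraph, at most $n^2 - 1 + q\cdot n^2$, still finite). In particular, the degree of every vertex is almost surely finite. However, to match the style of Lemma~\ref{prelim-1} and to produce a bound that will also be useful later when we pass to expectations in the convergence arguments, I would instead estimate the expected degree and apply Markov's inequality exactly as in the Watts--Strogatz case.

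Concretely, I would decompose the degree of any node $v$ into three contributions: (i) the lattice edges, (ii) the $q$ outgoing shortcuts of $v$, and (iii) the incoming shortcuts of $v$. For (i), since the $L_1$-ball of radius $k$ in $\mathbb{Z}^2$ contains $2k(k+1)$ lattice points other than the center, the lattice contribution is at most $2k(k+1)$ deterministically. For (ii), the contribution is exactly $q$ by construction. For (iii), the expected number of incoming shortcuts equals $\Lambda_v^{(n)}$, which is a sum of $n^2-1$ nonnegative terms each bounded by $q$, hence bounded by $q(n^2-1)<\infty$. Adding these three contributions gives a finite upper bound on the expected degree of $v$, and Markov's inequality then yields $\mathbb{P}[\deg(v)<\infty]=1$.

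There is essentially no obstacle here: the whole content is that a finite graph is trivially locally finite, and the Markov argument is included purely for parallelism with Lemma~\ref{prelim-1}. The only tiny subtlety worth flagging is that $K(n,q,k,\ell)$ is a multigraph, so one must count shortcut multiplicities in the degree; but since each node emits exactly $q$ outgoing shortcuts, this does not affect the finiteness bound. Note that showing local finiteness of the limit object $K(q,k,\ell)$ (which is genuinely infinite, and for which one would need to check $\Lambda_m<\infty$ uniformly) is a separate task not claimed in the lemma statement and would be handled alongside the convergence proof.
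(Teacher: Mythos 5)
Your proof is correct for the lemma as literally stated, but it misses the real content of the paper's argument, and that content matters downstream. You are right that a finite graph is trivially locally finite, and you are right that $\Lambda_u^{(n)}$ is a sum of $n^2-1$ terms each at most $q$ (since $[d(w,u)]^{-\ell}$ is one of the summands in the denominator $\sum_{v\neq w}[d(w,v)]^{-\ell}$), so the expected degree is finite for each fixed $n$. However, the paper does not stop at a bound that blows up with $n$: it shows $\Lambda_u^{(n)} \le 4q$ \emph{uniformly in $n$}, by sandwiching the normalizer $\sum_{v\neq w}[d(w,v)]^{-\ell}$ between the worst case (corner node, giving essentially $\sum_{j=1}^{n-1} j^{1-\ell}$) and the best case (center node, giving essentially $4\sum_{j=1}^{n-1} j^{1-\ell}$), so that every summand $[d(w,u)]^{-\ell}/\sum_{v\neq w}[d(w,v)]^{-\ell}$ is at most four times the corresponding term in a normalized probability distribution, and the whole sum is at most $4$.

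This uniform bound is the point: immediately after the preliminary lemmas the paper invokes Lemmas~\ref{prelim-1} and~\ref{prelim-2} together with Lemma~\ref{prelim-4} to conclude that $\mathbbm{E}[|B_r(G_n,o_n)|]$ is bounded by a constant independent of $n$, and then that $|B_r(G_n,o_n)| = o(f(n))$ almost surely for any function $f(n)\to\infty$. With your bound $q(n^2-1)$, Lemma~\ref{prelim-4} would only give $\mathbbm{E}[|B_r(G_n,o_n)|]\le (C n^2)^r$, which is useless for the downstream arguments. So while nothing you wrote is false, you have effectively proved a much weaker lemma than the paper needs; the Markov step was not "purely for parallelism" but to deliver the $n$-uniform expected degree that the rest of the Kleinberg-model proofs rely on. If you want your proof to serve the same purpose, you should replace the $q(n^2-1)$ step with the corner/center comparison yielding $\Lambda_u^{(n)}\le 4q$.
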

\begin{proof}

Let $G_n$ be $K(n,q,k,\ell)$. The number of lattice edges of any node $u$ of $G_n$ is $2k(k+1)$. We know that its outgoing shortcut degree is $q$. For the expected incoming shortcut degree, notice that the sum $\sum_{v \neq u} [d(u, v)]^{-\ell}$ will have $n^2 -1$ terms for all $u \in V(G_n)$. This sum is maximized when $u$ is the center node and minimized when $u$ is a corner node. Therefore, we have the following two inequalities:
\begin{equation} \label{corner}
    \sum_{v \neq u} [d(u, v)]^{-\ell} \geq \sum_{j=1}^{n-1} (j+1) j^{-\ell} + \sum_{j=1}^{n-1} (n-j) (n-1+j)^{-\ell} > \sum_{j=1}^{n-1} j^{1-\ell},
\end{equation}
In fact, the first inequality above is tight when $u$ is the corner node. Further, for the upper bound (when $u$ is a center node), we get
\begin{equation} \label{center}
   \sum_{v \neq u} [d(u, v)]^{-\ell} < 4\sum_{j=1}^{n-1} j^{1-\ell}.
\end{equation}
Then, the expected number of incoming shortcuts of node $u$ is given by
$$\Lambda_{u}^{(n)} = q \sum_{w\neq u} \frac{[d(w, u)]^{-\ell}}{\sum_{v \neq w} [d(w, v)]^{-\ell}} \leq \frac{4q \sum_{j=1}^{n-1} j^{1-\ell}}{\sum_{j=1}^{n-1} j^{1-\ell}} \leq 4q.$$
\end{proof}

Next, we will establish that the limit of the Kleinberg model is locally finite. 
To prove this, we need to ensure that the number of incoming edges originating from infinitely many nodes is controlled.

\begin{lemma}[Local finiteness of the Kleinberg limit object]\label{prelim-3} Let $K(q,k,\ell)$ be the local limit of $K(n, q,k,\ell)$ given by Theorems~\ref{main-thm-kl-1},~\ref{main-thm-kl-2},~\ref{main-thm-kl-critical} for $\ell<2, \ell=2,$ and $\ell >2$ respectively. Then
$K(q,k,\ell)$ is locally finite. 
\end{lemma}

\begin{proof}
    Let the incoming degree of node $u$ in $K(q, k, \ell)$ be $D_{u}$. Since the number of outgoing shortcuts and lattice connections is $2k(k+1) + q$, it suffices to show that $\mathbbm{E}[D_u] < \infty$. 
    If $\ell < 2$, then $\mathbbm{E}[D_u] =: \Lambda_m$ defined in Section~\ref{local-limit-kl-1}, where $m$ is the mark of node $u$, converges and is finite.

    For $\ell = 2$, each node $u$ has $\mathrm{Poi}(q)$ incoming shortcuts, giving $\mathbbm{E}[D_u] = q$.

    Now, let's consider $\ell > 2$. In this case, by symmetry, $D_{u}$ will be the same for all $u$. The probability that a shortcut is going to connect to it from node $v$ such that $d(u, v) = d$ is $d^{-\ell}/(4\zeta(\ell-1))$.
    Then, the moment-generating function of $D_u$ is
%     \begin{align*}
%     \mathbbm{E}[e^{tD_u}]&=
%     \prod_{d=1}^\infty \left(1-\frac{d^{-\ell}}{4\zeta(\ell-1)}+\frac{d^{-\ell}e^t}{4\zeta(\ell-1)}\right)^{4qd} \\
%     &\leq  \prod_{d=1}^\infty e^{4qd\log\big(\exp\left((e^t-1)\frac{d^{-\ell}}{4\zeta(\ell-1)}\right)\big)} =  \prod_{d=1}^\infty e^{q(e^t-1)\frac{d^{1-\ell}}{\zeta(\ell-1)}} \\
%     &=  e^{q(e^t-1)\sum_{d=1}^\infty\frac{d^{1-\ell}}{\zeta(\ell-1)}} = e^{q(e^t-1)}.
% \end{align*}
\begin{align*}
        \mathbbm{E}[e^{tD_u}]
    &= \prod_{d=1}^\infty \left(1+(e^t-1)\,\frac{d^{-\ell}}{4\zeta(\ell-1)}\right)^{4qd} \\
    &\leq \prod_{d=1}^\infty \exp\!\left(4qd\cdot(e^t-1)\,\frac{d^{-\ell}}{4\zeta(\ell-1)}\right)
       \quad\text{(using }1+x\le e^x\text{)}\\
    &= \prod_{d=1}^\infty \exp\!\left(q(e^t-1)\,\frac{d^{1-\ell}}{\zeta(\ell-1)}\right)
     = \exp\!\left(q(e^t-1)\sum_{d=1}^\infty\frac{d^{1-\ell}}{\zeta(\ell-1)}\right)
     = e^{q(e^t-1)}.
\end{align*}

Since the moment-generating function exists in a neighborhood of $0$, all moments of $D_u$ are finite. In particular, for $\ell > 2$, we have $\mathbbm{E}[D_u] < \infty$.
\end{proof}

Finally, we will state a useful lemma that we will frequently refer to in our proofs. For a rooted graph $(G,o)$ and $r\ge 0$, write
\(
\partial B_r(G,o):=\{v\in V(G): d_G(o,v)=r\}.
\)

\begin{lemma} \label{prelim-4}
Let $(G,o)$ be a rooted random graph. Assume that there exists a constant $d\ge 1$ such that, for every $t\ge 0$,
\[
\mathbb E\!\left[\,|\partial B_{t+1}(G,o)| \,\middle|\, B_t(G,o)\right]
\le d\,|\partial B_t(G,o)| \qquad \text{a.s.}
\]
Then, for every integer $r\ge 0$,
\[
\mathbb E[|\partial B_r(G,o)|]\le d^r,
\qquad
\mathbb E[|B_r(G,o)|]\le \sum_{j=0}^r d^j.
\]
 Furthermore, if $\{(G_n,o_n)\}_{n\ge 1}$ is a sequence of rooted random
  graphs each satisfying the hypothesis with the same constant $d$, then
  for any $f(n)\to\infty$ and any fixed $\epsilon>0$,
  \[
  \lim_{n\to\infty}
  \mathbb P\!\left[\frac{|B_r(G_n,o_n)|}{f(n)}\ge \epsilon\right]=0.
  \]
% Further, for any function $f(n)$ with $f(n)\to\infty$ and any fixed $\epsilon>0$,
% \[
% \lim_{n\to\infty}
% \mathbb P\!\left[\frac{|B_r(G_n,o_n)|}{f(n)}\ge \epsilon\right]=0
% \]
% for every sequence $(G_n,o_n)$ satisfying the same bound with the same constant $d$.
\end{lemma}

\begin{proof}
We first prove the bound on $\mathbb E[|\partial B_r(G,o)|]$ by induction on $r$. Since
\(
|\partial B_0(G,o)|=1,
\)
we have $\mathbb E[|\partial B_0(G,o)|]=1=d^0$. Now assume that
\(
\mathbb E[|\partial B_r(G,o)|]\le d^r.
\)
Then
\begin{align*}
\mathbb E[|\partial B_{r+1}(G,o)|]
=
\mathbb E\!\left[\mathbb E\!\left[|\partial B_{r+1}(G,o)|\,\middle|\, B_r(G,o)\right]\right] \le
\mathbb E\!\left[d\,|\partial B_r(G,o)|\right]
\le d^{r+1}.
\end{align*}
Thus $\mathbb E[|\partial B_r(G,o)|]\le d^r$ for all $r\ge 0$. Since
\(
B_r(G,o)=\bigsqcup_{j=0}^r \partial B_j(G,o),
\)
we obtain
\(
|B_r(G,o)|=\sum_{j=0}^r |\partial B_j(G,o)|,
\)
and therefore
\(
\mathbb E[|B_r(G,o)|]
\le \sum_{j=0}^r \mathbb E[|\partial B_j(G,o)|]
\le \sum_{j=0}^r d^j.
\) Finally, Markov's inequality gives
\[
\mathbb P\!\left[\frac{|B_r(G_n,o_n)|}{f(n)}\ge \epsilon\right]
\le
\frac{\mathbb E[|B_r(G_n,o_n)|]}{\epsilon f(n)}
\le
\frac{\sum_{j=0}^r d^j}{\epsilon f(n)},
\]
which tends to $0$ as $n\to\infty$ since $f(n)\to\infty$.
\end{proof}

Note that all graph models considered in this paper satisfy the hypothesis of Lemma~\ref{prelim-4}.\footnote{To see this, condition on $B_t(G,o)$, and let $F:=\partial B_t(G,o)$. Every node in $\partial B_{t+1}(G,o)$ is discovered from some node of $F$, so $|\partial B_{t+1}(G,o)|\le \sum_{u\in F} N_u,$
where $N_u$ is the number of previously unexplored neighbors of $u$. Hence it suffices to show that $\mathbb E[N_u\mid B_t(G,o)]\le d$
uniformly in $u$. For the Watts--Strogatz models, each node has expected incoming shortcut degree at most $\phi k$. Thus, one may take $d=2k+\phi k\le 3k$. For the finite Kleinberg model $K(n,q,k,\ell)$, each node has expected incoming shortcut degree at most $4q$ by Lemma~\ref{prelim-2}. Thus, one may take $d=2k(k+1)+5q.$
For the limiting Kleinberg models, in the critical and supercritical cases, the expected incoming shortcut degree is $q$, so one can take $d=2k(k+1)+2q.$
In the subcritical case, the incoming shortcut degree is $\mathrm{Poi}(\Lambda_m)$, where $\Lambda_m$ is finite for every mark $m$ by construction. Since $[0,1]^2$ is compact, $\Lambda_m$ is a continuous function of $m$, and $\sup_m \Lambda_m<\infty$, one can take $d=2k(k+1)+q+\sup_m \Lambda_m.$} From now on, whenever we need a control the radius-$r$ neighborhood of a node, we will use Lemma~\ref{prelim-4} in the following form: for every $\eta>0$, there exists $M=M(r,\eta)$ such that
\[
\sup_n \mathbb P\big[|B_r(G_n,o_n)|>M\big]<\eta.
\]
Accordingly, we may work on the event $\{|B_r(G_n,o_n)|\le M\}$ and let $\eta\downarrow 0$ at the end. 

\section{Local Limit of WS Model: Proof of Theorem~\ref{main-thm-ws}} \label{sec:proof-WS}

Let $G_n$ be the random graph sampled according to $WS(n, \phi, k)$, $o_n$ be the root selected uniformly at random from $G_n$, and $(G, o)$ be the random graph sampled according to $WS(\phi, k)$ with measure $\mu$ defined as in Theorem \ref{main-thm-ws}. 
Define $\mu_n$ to be the measure of $(G_n, o_n) = (WS(n, \phi, k), o_n)$. 
We prove a stronger version of Theorem~\ref{main-thm-ws}, where edges have marks defined as follows.

\subsection{Edge Marks and Ordering} \label{edge-marks}

Recall the construction of the Watts-Strogatz model from Section~\ref{ws-process}. At a high level, the edges of  $(G_n, o_n)$ and $(G, o)$ are divided into two categories: Edges that remain in their original positions in the $k$-ring  are called \emph{ring edges}, while those rewired during the construction are referred to as \emph{shortcuts}. Each edge is assigned two marks based on the following:
\begin{itemize}
    \item  \textit{Ring Distance:} This is the minimum number of steps along the original cycle ($1$-ring) subgraph (the cycle) needed to traverse between the two endpoints of an edge. There are $k$ possible values for this mark. This mark remains unchanged after rewiring: for shortcut edges, the ring distance corresponds to the original edge they replaced during the rewiring process.
    \item \textit{Direction of the Edge:} During the Watts-Strogatz construction, edges are initially directed before being made bidirectional in the final step. Consider a Breadth-First-Search (BFS) process starting from an arbitrary node. An edge is marked as \emph{outgoing} if it is explored from the node it stems from. An edge is marked as \emph{incoming} if it is explored from the node it connects to.
\end{itemize}

As a result of this process, we have $2k$ different marks for ring edges and $2k$ different marks for shortcuts. Hence, we have $4k$ different marks for edges in total.  We use the metric $d_\Xi(m_1, m_2) = 1-\mathbbm{1}(m_1 = m_2)$ for any edge marks $m_1$ and $m_2$, which equips the finite mark set with the discrete metric, hence a separable metric space as required by Definition \ref{metric}. 

Now, we will order the edges of $(G_n, o_n), (G, o)$ (and the subgraph $B_r(G,o)$, denoted by $H_*$) according to the BFS process that starts from the respective root and prioritizes the edges in the following order: outgoing shortcuts, outgoing ring edges, incoming ring edges, and incoming shortcuts, and mark them based on the mark classification above simultaneously. The BFS will prioritize ring edges that connect closer nodes to each other.  Let $\overline{G_n}$, $\overline{G}$, and $\overline{H}_*$ be the version of $G_n, G$ and $H_*$ with ordered and marked edges respectively. See Figure \ref{tree} for an example with $n=12$ and $k=1$.

\begin{figure}
\begin{center}
  \begin{tikzpicture}[scale=0.6]
        \begin{scope}
            \foreach \i\col\k in {1/teal/9, 2/violet/11, 3/blue-shortcut/6, 4/red/3, 5/teal/5, 6/teal/7, 7/blue-shortcut/4, 8/blue-shortcut/2, 9/black/0, 10/teal/1, 11/blue-shortcut/10, 12/red/8} {
            \node [sdot, \col] (\i) at (-30*\i:2) {};
            \node   at (-30*\i:2.5) {\tiny $\k$};
        }
            \draw (1) -- (2);
            \draw (2) -- (12);
            \draw (3) -- (4);
            \draw (4) -- (5);
            \draw (5) -- (6);
            \draw (6) -- (12);
            \draw (7) -- (8);
            \draw (8) -- (9);
            \draw (9) -- (10);
            \draw (10) -- (4);
            \draw (11) -- (12);
            \draw (12) -- (1);
        \end{scope}
        \begin{scope}[xshift=4cm, rotate=90]
            \node (1) [sdot, black] {}
            child {node (3) [sdot, blue-shortcut] {} child {node (5) [sdot, blue-shortcut] {
            } }}
            child {node (c) [sdot, teal] {} child {node (4) [sdot, red] {} child {node (7) [sdot, blue-shortcut] {
            } } child {node (6) [sdot, teal] {} child {node (8) [sdot, teal] {} child {node (9) [sdot, red] {} child {node (11) [sdot, blue-shortcut] {} } child {node (10) [sdot, teal] {} } child {node (12) [sdot, violet] {} }}
            }}}}
        ;
        \end{scope}
    \end{tikzpicture}
\end{center} 
   \caption{(left) Ordered and marked $WS(12, \phi, 1)$ (right) viewing same graph as a tree. Since $WS(12, \phi, 1)$ is a tree, there is a $1$-to-$1$ correspondence between the edges and the non-root nodes. Thus, we instead ordered and marked the child of the edge in the BFS with the corresponding edge mark. The root has a special mark. \\ Marks: \(\bullet\) (root), {\color{teal}{\(\bullet\)}} (outgoing ring edge), {\color{blue-shortcut}{\(\bullet\)} }(incoming ring edge), {\color{violet}{\(\bullet\)}} (incoming shortcut), {\color{red}{\(\bullet\)}} (outgoing shortcut).}
    \label{tree}
\end{figure}

\subsection{Main Lemmas}

The main idea of the proof is a second-moment argument showing the convergence of the marked Watts-Strogatz model to its limit. To set this up, recall the definition of the indicator \(\mathbbm{1}_r^\epsilon\) from Definition~\ref{local-convergence-def}. Given a marked graph \((H_*, o_*, M_*)\), define
\begin{equation} \label{exp1} \mu_{r,n}^\epsilon((G_n,o_n)\simeq H_*) :=\mathbb{E}_{\mu_n}[\mathbbm{1}_r^\epsilon((G_n,o_n)\simeq H_*)],
\end{equation} 
and
\begin{equation} \label{exp2} \mu_{r}^\epsilon((G,o)\simeq (H_*,o_*)) :=\mathbb{E}_{\mu}[\mathbbm{1}_r^\epsilon((G, o)\simeq (H_*,o_*))].\end{equation} 
Without a loss, we can assume $\epsilon = 0$ because of our metric, and, for simplicity, omit the use of $\epsilon$ from now on.

Using the criterion given by Definition \ref{local-convergence-def}, it suffices to prove the convergence  for $(H_*, o_*, M_*)$  where  $\mu_r(H_*, o_*, M_*)>0$, i.e., the rooted marked graph $B_r(H_*, o_*, M_*)$ appears with a positive probability. Fix graph $H_*$ such that $\mu_r(H_*, o_*, M_*) > 0$. Given that $r$ is finite, by Lemma~\ref{prelim-1} and Lemma~\ref{prelim-4}, any $H_*$ considered from now on will be a finite graph. Let \(B_r(\overline{G}, o) = \overline{H}_*\) denote order equivalence; that is, the \(r\)-neighborhoods of the corresponding graphs are identical when comparing only nodes with the same order in the given orderings of \(\overline{G}\) and \(\overline{H}_*\). Notice that this ordering is not unique. However, in the event that $B_r(\overline{G}, o) = \overline{H}_*$, any other ordering of $G$ would be isomorphic to $H_*$ since the underlying structure is not changed. Thus, we get the following identity:
\begin{equation} \label{order-equivalence}
    \mu_r((G, o) \simeq\ H_*) =  \#(H_*) \cdot \mu_r((\overline{G}, o) = \overline{H}_*)
\end{equation}
where $\#(H_*)$ is the number of different ways we could order this marked graph based on the BFS rules outlined. Similarly, we have $\mu_{r,n}((G_n, o_n) \simeq\ H_*) =  \#(H_*) \cdot \mu_{r,n}((\overline{G_n}, o_n) = \overline{H}_*)$. Then, it suffices to prove Lemma~\ref{main-lemma-ws} below to show the convergence in probability.

\begin{lemma} [Local Convergence of Watts-Strogatz Model]  \label{main-lemma-ws} 

Fix any $r > 0$ and $H_* = B_r(H_*, o_*, M_*)$ such that $\mu_r(H_*, o_*, M_*)>0$ where $\mu$ is the measure of $WS(\phi, k)$ defined in section \ref{local-limit-ws}. Order and mark $G_n, G$, and $H_*$ according to the process outlined in section \ref{edge-marks} to get $\overline{G_n}, \overline{G}$ and $\overline{H}_*$. Then,
\begin{equation} \label{main-conv-ws}
    \frac{1}{n}\sum_{o_n\in V(G_n)}  \mathbbm{1}_r((\overline{G_n},o_n) = \overline{H}_* )\overset{\mathbb P}{\to}\mu_r((\overline{G},o) =  \overline{H}_*) \quad \text{ as $n \to \infty$.}
\end{equation}  
\end{lemma}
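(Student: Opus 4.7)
The plan is to prove Lemma~\ref{main-lemma-ws} by a second-moment argument. Set
$$Y_n := \frac{1}{n}\sum_{o_n \in V(G_n)} \mathbbm{1}_r((\overline{G_n}, o_n) = \overline{H}_*), \qquad p_r := \mu_r((\overline{G}, o) = \overline{H}_*).$$
It suffices to prove $\mathbb{E}[Y_n] \to p_r$ and $\mathrm{Var}(Y_n) \to 0$, since then Chebyshev's inequality gives convergence in probability. Because the marks live in the discrete compact space $\{m_1=m_2\}$-metric, there is no $\epsilon$-tolerance to handle and we may work with exact matches.

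For the first moment, note that by the cyclic symmetry of the ring lattice (for $n$ large enough that the $r$-neighborhood cannot wrap around, which happens with probability $1-O(1/n)$) the probability $\mathbb{P}[(\overline{G_n}, o_n) = \overline{H}_*]$ is independent of the choice of $o_n$. I would compute this probability by executing the BFS of Section~\ref{edge-marks} step by step on $G_n$ and recording, at each newly discovered node, exactly the random choices required to match $\overline{H}_*$: each outgoing ring edge stays a ring edge with probability $1-\phi$ or becomes a shortcut with a uniformly random target with probability $\phi$, and the number of incoming shortcuts at any fixed vertex is $\mathrm{Bin}(kn, \phi/n)$, which agrees with the limiting $\mathrm{Poi}(k\phi)$ up to total-variation error $O(1/n)$. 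The product of these factors is precisely the definition of $WS(\phi,k)$ restricted to $\overline{H}_*$, with an additive error of $O(|V(\overline{H}_*)|^2/n)$ coming from shortcut targets landing inside the already-explored neighborhood. Since $|V(\overline{H}_*)|$ is a fixed constant in $r$, this error is $o(1)$ and $\mathbb{E}[Y_n] \to p_r$ follows.

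For the second moment, expand
$$\mathbb{E}[Y_n^2] = \frac{1}{n^2} \sum_{o_n^{(1)}, o_n^{(2)} \in V(G_n)} \mathbb{P}\big[(\overline{G_n}, o_n^{(i)}) = \overline{H}_* \text{ for } i=1,2\big],$$
and split by whether $B_r(G_n, o_n^{(1)})$ and $B_r(G_n, o_n^{(2)})$ are vertex-disjoint. By Lemma~\ref{prelim-4} each neighborhood has $O(1)$ expected size, so the probability that a second uniformly chosen root falls within graph distance $\le 2r$ of the first (through ring or shortcut steps) is $O(1/n)$; the non-disjoint pairs contribute $O(1/n)$ to $\mathbb{E}[Y_n^2]$. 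Conditional on disjointness, the rewiring indicators and shortcut target choices touching the two neighborhoods are independent except for the constraint that shortcut endpoints avoid the other neighborhood, an event whose conditioning only perturbs probabilities by $O(1/n)$. Thus $\mathbb{E}[Y_n^2] = \mathbb{E}[Y_n]^2 + o(1) \to p_r^2$, giving $\mathrm{Var}(Y_n)\to 0$.

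The main obstacle is rigorously controlling shortcuts that would close short cycles inside $B_r$, since such cycles would destroy the factorization of the BFS into independent Full $k$-Fuzz spawns and the independence in the second-moment split. This is exactly the content of Lemma~\ref{no cycle}, which I would invoke both (i) in the first-moment analysis to treat each newly-encountered shortcut endpoint as an independent fresh vertex supporting its own independent $k$-fuzz subprocess, and (ii) in the second-moment analysis to decouple the two neighborhoods. Everything else reduces to routine bookkeeping: the Poisson limit for the incoming shortcut count is the classical Binomial-to-Poisson approximation, and the wrap-around and boundary corrections are absorbed into the $O(1/n)$ error once Lemma~\ref{no cycle} bounds the cycle-closing probability.
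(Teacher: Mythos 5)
Your proposal is correct and follows essentially the same route as the paper: second-moment method via Chebyshev, a step-by-step BFS conditioning argument with the Binomial-to-Poisson approximation for incoming shortcuts, Lemma~\ref{no cycle} to rule out short cycles through shortcuts, and a split of the second-moment sum by whether the two $r$-balls overlap. The paper organizes the first-moment argument slightly more formally (conditioning on the event $T_{i-1}$ and isolating the degree-distribution convergence and the no-cycle property into Lemmas~\ref{lm-degdist-WS} and \ref{no cycle}), and phrases the second-moment split in terms of $\operatorname{dist}_{G_n}(v,w)>2r$ rather than vertex-disjointness, but these are the same argument.
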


At a high level, we will first show that the expectation of the left-hand side in \eqref{main-conv-ws} converges to $\mu_r((\overline{G},o) =  \overline{H}_*)$ and then show that its variance goes to $0$ with $n$. This is formalized in the next two Lemmas. 

\begin{lemma}[First Moment: Watts-Strogatz Model] \label{ws-1moment} Fix any $r>0$ and $H_*$, and define $\overline{H_*}, (\overline{G_n}, o_n)$, $(\overline{G}, o), \mu_n$ and $\mu$ as in Lemma~\ref{main-lemma-ws}. Then,
\begin{equation*} 
\mu_{r,n}((\overline{G_n},o_n) = \overline{H}_*) \to \mu_{r}((\overline{G},o) = \overline{H}_*) \quad \text{ as $n \to \infty$.}
\end{equation*}
\end{lemma}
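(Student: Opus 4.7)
The plan is to compute $\mu_{r,n}((\overline{G_n},o_n)=\overline{H}_*)$ by a direct BFS analysis and compare, term-by-term, with the analogous expression for the $k$-Fuzz process that defines $\mu_r((\overline{G},o)=\overline{H}_*)$. Since the distribution of $WS(n,\phi,k)$ is invariant under rotations of the base cycle, I can fix $o_n$ to be a specific vertex and drop the averaging over the uniform root. Enumerate the vertices of $\overline{H}_*$ in BFS order as $o_*,v_1,\ldots,v_s$; by Lemmas~\ref{prelim-1} and \ref{prelim-4} and the hypothesis that $H_*$ is finite, $s$ is a constant depending only on $H_*$, so only a bounded number of BFS steps occur.

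I then expand $\mathbb{P}[B_r(\overline{G_n},o_n)=\overline{H}_*]$ as a product of conditional probabilities along the BFS. For every explored vertex $u$, its $k$ outgoing ring positions are independently kept (probability $1-\phi$, endpoint forced by the $k$-ring geometry) or rewired (probability $\phi$), and its $k$ incoming ring positions are independently kept with probability $1-\phi$ as well, since each is determined by the outgoing decision of its unexplored source. The number of incoming shortcuts at $u$ is Binomial$((n-1)k,\phi/(n-1))$ restricted to sources outside the already-explored set; by standard total-variation bounds this converges to Poisson$(k\phi)$ uniformly over the bounded range of counts that can appear in $H_*$. Each outgoing or incoming shortcut of $u$ prescribed by $\overline{H}_*$ to reach a fresh vertex contributes a factor of order $1/(n-O(1))$ per such vertex in $G_n$, while in the $k$-Fuzz it simply attaches a new (Full or Reduced) $k$-Fuzz; summing over the $n(n-1)\cdots(n-s)$ choices of identities for the $s$ fresh vertices in $G_n$ cancels all the $1/n$-type factors up to a multiplicative $1+O(1/n)$ correction. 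Collecting terms, the resulting product equals the corresponding $k$-Fuzz product up to $o(1)$.

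The hard part, and the only source of genuine error, is controlling the \emph{bad events} under which the BFS of $G_n$ structurally deviates from the $k$-Fuzz: a shortcut endpoint coinciding with an already-explored vertex (creating a local cycle through a shortcut, exactly the scenario excluded by the forthcoming Lemma~\ref{no cycle}), two shortcuts sharing an endpoint, or an incoming shortcut originating at an already-explored source. Each such coincidence involves two of the $s=O(1)$ explored objects and so occurs with probability $O(1/n)$, and a union bound over the $O(1)$ pairs yields total bad-event probability $O(1/n)=o(1)$. On the complement of bad events, the BFS of $G_n$ is a faithful truncated realization of the $k$-Fuzz construction, so $\mu_{r,n}((\overline{G_n},o_n)=\overline{H}_*)$ matches $\mu_r((\overline{G},o)=\overline{H}_*)$ up to the Binomial-to-Poisson gap and the $1/n$ cancellation error, both of which tend to zero as $n\to\infty$, proving the claim.
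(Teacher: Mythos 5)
Your proposal is correct and follows essentially the same route as the paper: fix the root by rotational symmetry, factor the probability along the BFS, show the incoming-shortcut Binomial tends to Poisson$(k\phi)$ by sandwiching, and control the probability that a shortcut lands inside the already-explored set. Your ``bad events'' union bound is exactly the content of the paper's Lemma~\ref{no cycle} (phrased there via $|B_r(G_n,o_n)|=o(n)$ a.s.\ and Lemma~\ref{prelim-4}, with a footnote noting that conditioned on $T_i$ the explored set is in fact $O(1)$, matching your $O(1/n)$ estimate).
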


\begin{lemma}[Second Moment: Watts-Strogatz Model] \label{ws-2moment} Fix any $r>0$ and $H_*$ as in Lemma~\ref{main-lemma-ws}. Then,
\begin{equation*} 
\text{Var}\left(\frac{1}{n}\sum_{o_n\in V(G_n)}  \mathbbm{1}_r((G_n,o_n)\simeq H_*) \right) \to 0 \quad \text{ as $n \to \infty$.}
\end{equation*}
\end{lemma}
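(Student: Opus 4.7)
The plan is to apply the second moment method, reducing the claim to a joint two-root version of Lemma~\ref{ws-1moment}. Write $X_n := \frac{1}{n}\sum_{o_n \in V(G_n)} \mathbbm{1}_r((G_n,o_n)\simeq H_*)$ and $\mu^\star := \mu_r((\overline{G},o)=\overline{H}_*)$. Since Lemma~\ref{ws-1moment} already gives $\mathbb{E}[X_n] \to \mu^\star$, it suffices to prove $\mathbb{E}[X_n^2] \to (\mu^\star)^2$. Expanding by linearity,
\[
\mathbb{E}[X_n^2] \;=\; \frac{1}{n^2} \sum_{o_1, o_2 \in V(G_n)} \mathbb{P}\bigl[(G_n,o_1) \simeq H_*,\; (G_n,o_2) \simeq H_*\bigr],
\]
so the task is to show this double sum converges to $(\mu^\star)^2$.

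The first step is to discard pairs $(o_1,o_2)$ whose $r$-neighborhoods overlap in $G_n$. Overlap means $o_2 \in B_{2r}(G_n, o_1)$, and Lemma~\ref{prelim-4} applied with $d=3k$ gives $\mathbb{E}[|B_{2r}(G_n, o_1)|] \leq (3k)^{2r} = O(1)$, so the total contribution to $\mathbb{E}[X_n^2]$ from overlapping pairs is $O(1/n) = o(1)$. For the remaining disjoint pairs I would build a joint coupling between $\bigl(B_r(G_n, o_1), B_r(G_n, o_2)\bigr)$ and two independent samples from the limit measure $\mu$, mimicking the single-root coupling already used in Lemma~\ref{ws-1moment}. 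Concretely, run the BFS explorations from $o_1$ and $o_2$ in parallel alongside two independent $k$-fuzz processes; at every step that requires a uniform random node of $G_n$ (the endpoint of a rewired outgoing shortcut, or the source of an incoming shortcut), declare a failure if the chosen node has already been revealed by either exploration. The total number of nodes revealed in both $r$-neighborhoods is $O_r(1)$ in expectation by Lemma~\ref{prelim-4}, so the union of failure events has probability $O(1/n)$. On its complement the two explorations are exactly distributed as two independent copies of $B_r(G,o)$ under $\mu$, giving the joint probability $(\mu^\star)^2 + o(1)$ uniformly in the disjoint pair, and summation over $\Theta(n^2)$ such pairs yields $\mathbb{E}[X_n^2] = (\mu^\star)^2 + o(1)$.

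The main obstacle is to execute the coupling so that, conditional on success, the two explorations are genuinely independent rather than merely marginally correct. Outgoing shortcuts are immediate because every ring edge's rewiring is an independent Bernoulli$(\phi)$ combined with a uniform endpoint choice over $[n]$. Incoming shortcuts are more delicate, since each explored node $u$ has Binomial$\bigl(k(n-1), \phi/n\bigr)$ incoming rewired edges that must be matched with the Poisson$(k\phi)$ limit. The Poisson approximation applied separately to $u\in B_r(o_1)$ and $u\in B_r(o_2)$ draws source nodes from disjoint coin-flip pools once we have conditioned on no collisions, which preserves independence between the two incoming streams; the resulting total variation error is $O(1/n)$ per node and hence $O(1/n)$ across both neighborhoods. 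The remaining bookkeeping — tracking the $4k$ edge mark categories of Section~\ref{edge-marks} and matching them against $\overline{H}_*$ — is identical to the single-root calculation in the proof of Lemma~\ref{ws-1moment}, and assembling these estimates finishes the variance bound.
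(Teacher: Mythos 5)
Your proposal follows the same second-moment strategy as the paper: split the double sum into overlapping pairs (which contribute $O(1/n)$ by Lemma~\ref{prelim-4}) and non-overlapping pairs, then show asymptotic independence of the two $r$-neighborhood explorations with error $O(1/n)$. The paper executes the independence step by conditioning on $B_r(G_n,v)\simeq H_*$, passing to the reduced graph $G_n^{v,r} = G_n - B_r(G_n,v)$, and re-running the first-moment argument there with correction terms of size $o(n)$; you instead run a joint coupling of the two BFS explorations against two independent $k$-Fuzz processes and declare failure on a uniform-endpoint collision. These are two phrasings of the same estimate. One small point worth tightening in your version: declaring failure only when a \emph{uniform-random} choice lands on an already-revealed node misses the ``buffer zone'' subtlety that the paper flags explicitly — a shortcut from $o_2$'s exploration may land just outside $B_r(G_n,o_1)$, and subsequent \emph{deterministic} ring-edge steps could then walk into territory whose ring-edge rewiring status was already revealed by $o_1$'s exploration. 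The fix is cheap (declare failure whenever a shortcut endpoint falls within ring distance $O(kr)$ of a revealed node; this is still an $O(1/n)$ event by Lemma~\ref{prelim-4}), and with it your coupling is complete.
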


Now, assuming these two lemmas, one can easily prove Lemma~\ref{main-lemma-ws}.

\subsubsection*{Proof of Lemma~\ref{main-lemma-ws} and Theorem~\ref{main-thm-ws}}
Take any $\epsilon>0$, and let
\[
\textbf{X}_n:=\frac{1}{n}\sum_{o_n\in V(G_n)} \mathbbm{1}_r\big((G_n,o_n)\simeq H_*\big).
\]
By Lemma~\ref{ws-1moment}, for all sufficiently large $n$,
\[
\left|\mu_{n,r}\big((G_n,o_n)\simeq H_*\big)-\mu_r\big((G,o)\simeq H_*\big)\right|<\frac{\epsilon}{2}.
\]
Therefore, for all sufficiently large $n$,
\[
\mathbb P\!\left[\left|\textbf{X}_n-\mu_r((G,o)\simeq H_*)\right|\ge \epsilon\right]
\le
\mathbb P\!\left[\left|\textbf{X}_n-\mu_{n,r}((G_n,o_n)\simeq H_*)\right|\ge \epsilon/2\right].
\]
Applying Chebyshev's inequality,
\[
\mathbb P\!\left[\left|\textbf{X}_n-\mu_{n,r}((G_n,o_n)\simeq H_*)\right|\ge \epsilon/2\right]
\le
\frac{4\,\mathrm{Var}(\textbf{X}_n)}{\epsilon^2}.
\]
By Lemma~\ref{ws-2moment}, $\mathrm{Var}(\textbf{X}_n)\to 0$, so
\[
\frac{1}{n}\sum_{o_n\in V(G_n)} \mathbbm{1}_r\big((G_n,o_n)\simeq H_*\big) \xrightarrow{\mathbb P} \mu_r\big((G,o)\simeq H_*\big).
\]
This proves Lemma~\ref{main-lemma-ws}, and hence Theorem~\ref{main-thm-ws}. \qed 

The rest of this section is dedicated to the proof of Lemmas \ref{ws-1moment} and \ref{ws-2moment}.

\subsection*{Proof of Lemma~\ref{ws-1moment}}

By symmetry, it suffices to show that for a fixed node $o_n \in V(G_n)$,
\begin{equation}\label{goal1}
\mu_n\big(B_r(\overline{G}_n,o_n)=\overline{H}_*\big)\to
\mu\big(B_r(\overline{G},o)=\overline{H}_*\big)
\qquad\text{as }n\to\infty.
\end{equation}

Fix $r>0$ and a finite ordered marked graph $\overline{H}_*$ with $\mu(B_r(\overline{G},o)=\overline{H}_*)>0$. Let $P_k$ denote the infinite path in which each node is connected to its $k$ nearest neighbors on either side (the Full $k$-Path). If $n>2kr+1$, then before rewiring,
\begin{equation}\label{pre-ring}
B_r(G_n,o_n)\cong B_r(P_k,o).
\end{equation}
Indeed, every path of graph length at most $r$ moves by at most $k$ positions along the ring at each step, so every node in $B_r(G_n,o_n)$ lies within cyclic distance at most $kr$ of $o_n$. This neighborhood cannot wrap around the cycle. Thus, for all sufficiently large $n$, the pre-rewiring radius-$r$ neighborhood is deterministic. 

After rewiring, the neighborhood is determined by the following local data revealed by the BFS:
\begin{itemize}
    \item for each explored node $u$ and each outgoing edge $m\in[k]$, whether
    the outgoing ring edge of type $m$ is rewired;
    \item for each explored node $u$, the ordered list of marks of the incoming
    shortcuts incident to $u$.
\end{itemize}
Indeed, if an outgoing edge with mark $m$ is not rewired, it contributes the corresponding outgoing ring-edge mark; if it is rewired, it contributes an outgoing shortcut with mark $m$. Likewise, the ordered list of incoming shortcut marks determines the incoming shortcut edges. Since the outgoing rewiring decisions already have the correct law in both models, the only asymptotic differences could be:
\begin{enumerate}
    \item the number of incoming shortcuts at each explored node;
    \item the possibility that a shortcut creates a local cycle;
    \item the marks of the incoming shortcuts. \label{shortcut-mark}
\end{enumerate}

Let $v_{0,n},v_{1,n},\dots,v_{|H_*|-1,n}$ be the nodes of
$B_r(\overline{G}_n,o_n)$ in BFS order, and let
$v_0,v_1,\dots,v_{|H_*|-1}$ be the corresponding nodes of
$B_r(\overline{G},o)$. Let $x_i$ be the number of incoming shortcuts of the
$i$-th node of $\overline{H}_*$. Define $T_i$ to be the event that the first
$i+1$ explored nodes in $\overline{G}_n$ and $\overline{G}$ agree with
$\overline{H}_*$ in BFS order, together with all edge marks seen so far.

For each $i$, let $\mathbf{Good}_i$ be the event that every shortcut endpoint
revealed at time $t$ while exploring $v_{i,n}$ lies far enough from the already exposed set in the BFS up until time $t$ so that it neither creates a cycle containing a shortcut edge nor causes a
new $k$-Fuzz component to wrap around and intersect a previously exposed fuzz.
Hence, it suffices to prove that
\begin{align}
\prod_{i=0}^{|H_*|-1}
\mu_n\big(v_{i,n}\text{ has }x_i\text{ incoming shortcuts and }\mathbf{Good}_i
\mid T_{i-1}\big)
\label{ws-goal}
\end{align}
converges to
\[
\prod_{i=0}^{|H_*|-1}\mu\big(v_i\text{ has }x_i\text{ incoming shortcuts}\mid
T_{i-1}\big)
=
\prod_{i=0}^{|H_*|-1}\frac{e^{-\phi k}(\phi k)^{x_i}}{x_i!},
\]
and that, conditional on these events, the ordered shortcut marks converge to the corresponding law in the limit. We begin by identifying the asymptotic distribution of the number of incoming shortcuts, which turns out to be Poisson. Intuitively, this comes from a law-of-rare-events phenomenon: each node may receive a shortcut from many possible source nodes, and these contributions are nearly independent, with each individual probability becoming very small. 

\begin{lemma}\label{lm-degdist-WS}
Let $x_i$ be the number of incoming shortcuts of the $i$-th node of
$\overline{H}_*$. Then
\begin{equation}\label{product}
\prod_{i=0}^{|H_*|-1}
\mu_n\big(v_{i,n}\text{ has }x_i\text{ incoming shortcuts}\mid T_{i-1}\big)
\to
\prod_{i=0}^{|H_*|-1}\frac{e^{-\phi k}(\phi k)^{x_i}}{x_i!}
\qquad\text{as }n\to\infty.
\end{equation}
\end{lemma}

\begin{proof}
Fix $i$. Conditional on $T_{i-1}$, only a bounded number of directed ring edges
have already had their rewiring status revealed, or are otherwise excluded from
contributing an incoming shortcut to $v_{i,n}$. Since $r$ and $\overline{H}_*$
are fixed, there exists a constant $C=C(r,k,\overline{H}_*)<\infty$ and an
integer $c_{i,n}\in[0,C]$ such that
\[
N_{i,n}:=\#\{\text{incoming shortcuts of }v_{i,n}\}\ \Big|\ T_{i-1}
\sim \mathrm{Bin}\!\left(nk-c_{i,n},\frac{\phi}{n}\right).
\]
Indeed, each eligible directed ring edge independently rewires to $v_{i,n}$ with
probability $\phi/n$. Therefore, for fixed $x_i$,
\[
\mu_n\big(v_{i,n}\text{ has }x_i\text{ incoming shortcuts}\mid T_{i-1}\big)
=
\binom{nk-c_{i,n}}{x_i}
\left(\frac{\phi}{n}\right)^{x_i}
\left(1-\frac{\phi}{n}\right)^{nk-c_{i,n}-x_i}.
\]
Since $c_{i,n}=O(1)$ and $x_i$ is fixed, the standard binomial-to-Poisson limit
gives
\[
\mu_n\big(v_{i,n}\text{ has }x_i\text{ incoming shortcuts}\mid T_{i-1}\big)
\to
\frac{e^{-\phi k}(\phi k)^{x_i}}{x_i!}.
\]
Finally, $|H_*|<\infty$, so multiplying over $i=0,\dots,|H_*|-1$ yields
\eqref{product}.
\end{proof}

Next we show that there is no \textit{local} cycle that contains a shortcut with high probability. This is the key simplification in the limit.

\begin{lemma}\label{no cycle}
For each $i$, condition on
\(
T_{i-1}\cap\{v_{i,n}\text{ has }x_i\text{ incoming shortcuts}\}.
\)
Order all shortcut endpoints revealed while exploring $v_{i,n}$. Let $S_{i,j}$ be the set of nodes already
exposed by the BFS just before the $j$-th such shortcut endpoint is revealed,
and define the forbidden set
\[
F_{i,j}:=\{u\in V(G_n): d_{\mathrm{ring}}(u,S_{i,j})\le 2kr\},
\]
where $d_{\mathrm{ring}}$ denotes distance in the original $1$-ring (the cycle).
Let $\mathbf{Good}_i$ be the event that every shortcut endpoint revealed while
exploring $v_{i,n}$ lies outside the corresponding forbidden set $F_{i,j}$.
Then
\[
\prod_{i=0}^{|H_*|-1}
\mu_n\big(\mathbf{Good}_i\mid
v_{i,n}\text{ has }x_i\text{ incoming shortcuts and }T_{i-1}\big)
\to 1
\qquad\text{as }n\to\infty.
\]
\end{lemma}

\begin{proof}
Since $r$ and $\overline{H}_*$ are fixed, there exists a
constant $C=C(r,k,\overline{H}_*)$ such that, uniformly in $i,j,n$,
\(
|S_{i,j}|\le C.
\)
Each node contributes at most $4kr+1$ nodes within ring distance $2kr$, so
there exists a constant $C'=C'(r,k,\overline{H}_*)$ such that
\(
|F_{i,j}|\le C'
\)
for all $i,j,n$
We now bound the probability that a revealed shortcut endpoint is \textit{bad}.

\noindent\textbf{Outgoing shortcut:}
On the event that
the corresponding outgoing edge is rewired, its target is uniform on $V(G_n)$.
Hence, we get
\[
\mu_n\big(
\text{the $j$-th outgoing shortcut target lies in }F_{i,j}
\mid v_{i,n}\text{ has }x_i\text{ incoming shortcuts and } T_{i-1}
\big)
\le \frac{|F_{i,j}|}{n}
\le \frac{C'}{n}
= O(n^{-1}).
\]

\noindent\textbf{Incoming shortcut:}
The source is chosen from the directed ring
edges of nodes outside of the ones that are already exposed. At least $k(n-C)$ such edges remain eligible, and at most $k|F_{i,j}|\le
kC'$ of them have source node in $F_{i,j}$. Therefore, we have
\[
\mu_n\big(
\text{the $j$-th incoming shortcut source lies in }F_{i,j}
\mid v_{i,n}\text{ has }x_i\text{ incoming shortcuts and } T_{i-1}
\big)
\le \frac{kC'}{k(n-C)}
= O(n^{-1}).
\]

Since $v_{i,n}$ has at most $k$ outgoing shortcuts and exactly $x_i$ incoming
shortcut endpoints revealed at this stage, only finitely many shortcut endpoints
are considered while exploring $v_{i,n}$. A union bound therefore gives
\[
\mu_n\big(\mathbf{Good}_i^c\mid
v_{i,n}\text{ has }x_i\text{ incoming shortcuts and }T_{i-1}\big)
=O\!\left(n^{-1}\right),
\]
uniformly in $i$. Multiplying over the finitely many nodes of
$\overline{H}_*$ gives the claim.
\end{proof}

Now, assume that $\cap_i\mathbf{Good}_i$ occurs. Then every shortcut
endpoint revealed by the BFS lies at ring distance greater than $2kr$ from the
already exposed set at the moment it is created. Therefore
the radius-$r$ ring neighborhood of a newly created shortcut endpoint is
disjoint from the radius-$r$ ring neighborhoods of all previously exposed
nodes and will be for the rest of the BFS. It remains to prove the convergence of the shortcut marks in
item~\eqref{shortcut-mark}. For each $i$, let
\[
E_i:=
T_{i-1}\cap
\{v_{i,n}\text{ has }x_i\text{ incoming shortcuts}\}\cap
\mathbf{Good}_i,
\]
and let
\[
(M_{i,1},\dots,M_{i,x_i})
\]
denote the ordered marks of the incoming shortcuts of $v_{i,n}$. For $m\in[k]$ and $1\le j\le x_i$, let $N_m^{(i,j)}$ be the number of eligible
directed ring edges of mark $m$ just before the $j$-th incoming shortcut of
$v_{i,n}$ is chosen, and let
\[
N^{(i,j)}:=\sum_{m=1}^k N_m^{(i,j)}.
\]
Initially there are exactly $n$ directed ring edges of each mark $m$ and $nk$
directed ring edges in total.
The only non-eligible edges at stage $(i,j)$ are:
\begin{itemize}
    \item edges already exposed by the BFS;
    \item edges whose source lies in the forbidden set $F_{i,j}$;
    \item the $j-1$ source edges already chosen for the previous incoming
    shortcuts of $v_{i,n}$.
\end{itemize}
Since $r$, $k$, and $\overline{H}_*$ are fixed, there exists a constant
$C=C(r,k,\overline{H}_*)$ such that, uniformly in $i,j,m$,
\begin{equation}\label{eligible-counts}
N_m^{(i,j)} = n - O_C(1),
\qquad
N^{(i,j)} = nk - O_C(1).
\end{equation}

Conditional on $E_i$, the ordered marks of the incoming shortcuts are sampled
without replacement from these eligible shortcuts. Hence, for every ordered
tuple $(m_1,\dots,m_{x_i})\in[k]^{x_i}$,
\begin{align*}
&\mu_n\Big(
(M_{i,1},\dots,M_{i,x_i})=(m_1,\dots,m_{x_i})
\,\Big|\, E_i
\Big) =
\prod_{j=1}^{x_i}
\frac{N_{m_j}^{(i,j)}}{N^{(i,j)}}.
\end{align*}
By \eqref{eligible-counts},
\[
\frac{N_{m_j}^{(i,j)}}{N^{(i,j)}}
=
\frac{1}{k}+O\!\left(\frac1n\right),
\]
uniformly in $i,j$ and in the previously chosen marks. Since $x_i$ is fixed, we
obtain
\[
\mu_n\Big(
(M_{i,1},\dots,M_{i,x_i})=(m_1,\dots,m_{x_i})
\,\Big|\, E_i
\Big)
=
k^{-x_i}+O\!\left(\frac1n\right)
\to
k^{-x_i},
\]
which is exactly the law of the ordered incoming shortcut marks at $v_i$ in the
limit graph $\overline{G}$.  Since $|H_*|<\infty$,
multiplying the conditional probabilities along the BFS exploration yields convergence of the shortcut marks in item~\eqref{shortcut-mark}.
Together with Lemmas~\ref{lm-degdist-WS} and \ref{no cycle}, this proves that
the joint law of the BFS data at each explored node converges to the
corresponding law in the recursive $k$-Fuzz limit.  \qed

\subsection*{Proof of Lemma~\ref{ws-2moment}} \label{secondmoment} 

For the second moment, we are interested in showing
\begin{equation} \label{goal2}
\begin{split}
&\text{Var}\left(\frac{1}{n} \sum_{o_n\in V(G_n)}  \mathbbm{1}_r((G_n,o_n)\simeq H_* )  \right) \\ &=\mathbb{E}\left[\left(\frac{1}{n} \sum_{o_n\in V(G_n)}  \mathbbm{1}_r((G_n,o_n)\simeq H_* ) \right)^2 \right] - \mathbb{E}\left[\frac{1}{n} \sum_{o_n\in V(G_n)}  \mathbbm{1}_r((G_n,o_n)\simeq H_* )  \right]^2 \to 0.
\end{split}
\end{equation}
By symmetry,
\begin{equation} \label{eq-prob}
\begin{split}
   &\lim_{n \to \infty} \mathbb{E}\left[\left(\frac{1}{n} \sum_{o_n\in V(G_n)}  \mathbbm{1}_r((G_n,o_n)\simeq H_* ) \right)^2 \right] \\ &= \lim_{n \to \infty} \frac{1}{n} \mathbb{P}[B_r(G_n, v) \simeq H_*] + \frac{n(n-1)}{n^2} \mathbb{P}_w[B_r(G_n, v) \simeq H_* \cap B_r(G_n, w) \simeq H_*] \\
    &= \lim_{n \to \infty} \mathbb{P}_w[B_r(G_n, v) \simeq H_* \cap B_r(G_n, w) \simeq H_*]
\end{split}
\end{equation}
where $v$ is some arbitrary node and $w$ is picked uniformly at random from all nodes but $v$ ($\mathbb{P}$ accounts for the randomness of $G_n$ while $\mathbb{P}_w$ accounts for the randomness of $G_n$ as well as $w$). Note that we can rewrite $\mathbb{P}_w[B_r(G_n, v) \simeq H_* \cap B_r(G_n, w) \simeq H_*]$ by conditioning on the event that $v$ and $w$ have at least a distance of $2r$, i.e.,
\begin{equation}
\begin{split} \label{prob-partition}
    &\mathbb{P}_w[B_r(G_n, v) \simeq H_* \cap B_r(G_n, w) \simeq H_* | \text{dist}_{G_n}(v, w) \leq 2r] \mathbb{P}_w[\text{dist}_{G_n}(v, w) \leq 2r] \\
    &+ \mathbb{P}_w[ B_r(G_n, w) \simeq H_* | B_r(G_n, v) \simeq H_* \cap \text{dist}_{G_n}(v, w) > 2r] \mathbb{P}_w[B_r(G_n, v) \simeq H_* \cap \text{dist}_{G_n}(v, w) > 2r].
\end{split}
\end{equation} 

Now, let's evaluate the former probability. Because $w$ is picked uniformly at random, by applying Lemma~\ref{prelim-1}, 
\begin{align*} 
\mathbb{P}_w[\text{dist}_{G_n}(v, w) \leq 2r] = \frac{\mathbb{E}[B_{2r}(G_n, v)-1]}{n-1} \leq \frac{(3k)^{2r}}{n-1},
\end{align*}
which converges to $0$ as $n$ grows. This also implies that 
\begin{align*}
    \lim_{n \to \infty} \mathbb{P}_w[B_r(G_n, v) \simeq H_* \cap \text{dist}_{G_n}(v, w) > 2r] &=  \lim_{n \to \infty} \mathbb{P}[B_r(G_n, v) \simeq H_* ]
\end{align*}
which converges to $\mu((G,o)\simeq H_*)$ by Lemma~\ref{ws-1moment}. Next, let's analyze the rest of the terms. We claim  that $$\lim_{n \to \infty} \mathbb{P}_w[ B_r(G_n, w) \simeq H_* | B_r(G_n, v) \simeq H_* \cap \text{dist}_{G_n}(v, w) > 2r] = \lim_{n \to \infty} \mu_{r, n}((G_n,o_n)\simeq H_*).$$
Define $G_n^{v,r}$ to be the graph induced by the nodes in $V(G_n) \backslash V(B_{r}(G_n, v))$. Notice that 
\begin{align*}
    \lim_{n \to \infty} \mathbb{P}_w[ B_r(G_n, w) \simeq H_* | B_r(G_n, v) \simeq H_* \cap \text{dist}_{G_n}(v, w) > 2r] &=  \lim_{n \to \infty} \mathbb{P}_{w\sim V(G_n^{v,2r} )}[ B_r(G_n^{v,r}, w) \simeq H_*].
\end{align*}
where in the second term $w$ is chosen uniformly at random from the nodes of $G_n^{v,2r}$. Now, one can follow very similar steps to the proof of Lemma~\ref{ws-1moment} to show the distribution of $B_r(G_n^{v,r}, w)$ converges to $\mu((G_n,o_n)\simeq H_*)$. This is formalized in the next lemma, whose proof appears in Appendix~\ref{sec:omitted-ws-proof}.

\begin{lemma}\label{omitted-ws-lemma}
Fix any $r>0$ and $H_*$, and define $\mu$ as in Lemma~\ref{main-lemma-ws}. Then,
\begin{equation*} 
    \frac{1}{|G_n^{v,2r}|}\sum_{w\in V(G_n^{v,2r})}  \mathbbm{P} \left[B_r(G_n^{v,r}, w) \simeq H_* \right] \to\mu_r((G,o) =  H_*) \quad \text{ as $n \to \infty$.}
\end{equation*}  
\end{lemma}
Applying Lemma~\ref{omitted-ws-lemma}, we get
\begin{align}
   \lim_{n \to \infty} \left| \mathbb{P}_w[B_r(G_n, w) \simeq H_* | B_r(G_n, v) \simeq H_* \cap \text{dist}_{G_n}(v, w) > 2r] - \mathbb{P}[B_r(G_n, v) \simeq H_*] \right| = 0.
\end{align}
Combining all of these results with \eqref{prob-partition},
\begin{equation*}
    \lim_{n \to \infty} |\mathbb{P}_w [B_r(G_n, v) \simeq H_* \cap B_r(G_n, w) \simeq H_*] -  \mathbb{P}[B_r(G_n, v) \simeq H_*]^2 | \to 0.
\end{equation*}
By \eqref{goal2} and \eqref{eq-prob}, we have
\begin{align*}
    \lim_{n \to \infty} \text{Var}\left(\frac{1}{n} \sum_{o_n\in V(G_n)}  \mathbbm{1}_r((G_n,o_n)\simeq H_* )  \right)  &= \lim_{n \to \infty} \mathbb{P}[B_r(G_n, v) \simeq H_*]^2 - \biggr[ \frac{1}{n} \mathbb{P}[B_r(G_n, v) \simeq H_*]^2 \\
    &\quad + \frac{n(n-1)}{n^2} \mathbb{P}_w[B_r(G_n, v) \simeq H_* \cap B_r(G_n, w) \simeq H_* \biggr] = 0.
\end{align*}
With this, we conclude the proof for Lemma~\ref{ws-2moment} and Theorem \ref{main-thm-ws}. \qed

\begin{figure}
\centering
\begin{tikzpicture}[scale=0.45]
\def\n{9} 
\foreach \i in {1,...,\n} {
    \foreach \j in {1,...,\n} {
        \node [sdot] (\i-\j) at (\i,-\j) {};
    }
}
\node [sdot, red] at (5, -5) {};
\foreach \i in {1,...,\n} {
    \foreach \j [count = \k] in {2, ..., \n} {
            \draw (\i-\k) -- (\i-\j);
            \draw (\k-\i) -- (\j-\i);
        }}
\draw[->, blue-shortcut, thick] (5-5) -- (8-9);
\draw[->, blue-shortcut, thick] (5-5) -- (1-3);
\draw [<->, violet, out = 110, in = 250] (5-5) to (5-3);
\draw [<->, violet, out = 20, in = 160] (5-5) to (7-5);
\draw [<->, violet, out = -70, in = 70] (5-5) to (5-7);
\draw [<->, violet, out = -160, in = -20] (5-5) to (3-5);
\draw [<->, violet] (5-5) to (4-4);
\draw [<->, violet] (5-5) to (6-6);
\draw [<->, violet] (5-5) to (6-4);
\draw [<->, violet] (5-5) to (4-6);
\draw [<->, violet] (5-5) to (5-4);
\draw [<->, violet] (5-5) to (5-4);
\draw [<->, violet] (5-5) to (5-6);
\draw [<->, violet] (5-5) to (4-5);
\draw [<->, violet] (5-5) to (6-5);
\draw [->, teal, thick] (1-7) to (5-5);
\draw [->, teal, thick] (9-8) to (5-5);
\end{tikzpicture}
    \caption{$K(9, 2, 2, \ell)$ for some $\ell$. For simplicity, we plot the directed edges that only belong to the center node and colored edges based on the type: incoming vs. outgoing, lattice edges vs. shortcuts.}
    \label{kleinberg-model}
\end{figure}

\section{Kleinberg Model when $\ell\leq2$: \\ Behavior of Marks and Shortcuts in the Locally Tree-Like Regime} \label{proof-main-thm-kl-1}

Let $G_n$ be a random graph sampled according to $K(n, q, k, \ell)$, $o_n$ be a root selected uniformly at random from $G_n$, and $(G, o)$ be the random graph sampled according to $K_{<}(q, k, \ell)$  with measure $\mu$ defined in Section~\ref{local-limit-kl-1} if $\ell < 2$ and $K_{=}(q, k, \ell)$  with measure $\mu$ defined in Section~\ref{local-limit-kl-critical} if $\ell = 2$ . Define $\mu_n$ to be the measure of $(G_n, o_n) = (K(n, q, k, \ell), o_n)$. Let $V(G_n)$ be the node set of $G_n$ with $n^2$ nodes. 
Define $\mu_{r,n}^{\epsilon}$ and $\mu_r^{\epsilon}$ as in \eqref{exp1} and \eqref{exp2}. Later, we will introduce $\hat{\mu}$ and define $\hat{\mu}_{r, n}^{\epsilon}$ similarly. 

\subsection{Node Marks and Ordering} \label{sec:node-marks-ordering}
We begin by defining marks and the associated metric space. The nodes in \((G, o)\) and \(H_* \simeq B_r(G,v)\) are, by definition, sampled with marks. Recall from Section~\ref{kleinberg-process} that the lattice coordinate of node \(u\) is \((u_x, u_y)\), with the top-left node at \((0,0)\) and each coordinate increasing by \(1\) when moving right and down respectively. For each node \(u \in G_n\), define its mark as
\[
M(u) = \left(\frac{u_x}{n}, \frac{u_y}{n}\right).
\]
We use the \(L_1\) metric $d_\Xi(m_1, m_2) = \|m_1 - m_2\|_1$ for any two marks $m_1$ and $m_2$. Since marks lie in \(\mathbb{R}^2\) with the \(L_1\) metric, the marks belong to a separable space, as required by Definition~\ref{metric}.

Similar to the proof of Theorem~\ref{main-thm-ws}, we will order and mark the nodes of $(G_n,o_n)$, $(G,o)$ and rooted graph $H_*$ following a BFS process that starts from the root and prioritizes lattice edges over incoming shortcuts, and incoming shortcuts over outgoing shortcuts and breaking ties between lattice edges in some deterministic way (see Figure \ref{lattice order} for an example). Let the ordered and marked version of $G_n, G$, and $H_*$ be $\overline{G_n}$, $\overline{G}$ and $\overline{H}_*$ respectively, and let $(\overline{G_n},v) = \overline{H}_*$ denote the order equivalence similar to \eqref{order-equivalence}. For simplicity, when we write $B_r(G,v)\simeq H_*$ (or $B_r(\overline{G_n},v) = \overline{H}_*$ etc.), we will assume $\max_{u\in H_*} d_\Xi(M(u),M_*(\pi(u))) = |M(u),M_*(\pi(u))|_1 \leq \epsilon$ holds for some $\pi$ as in Definition \ref{local-convergence-def} in this section.

\subsection{Absence of Local Cycles and Collapse of Marks at Criticality}

Before proving the two main theorems, we isolate a basic estimate on the distribution of shortcut lengths in
\(
G_n:=K(n,q,k,\ell).
\)
This estimate implies that, when $\ell\le 2$, shortcuts do not appear in any `local' cycle with high probability. In particular, traversing shortcuts leads to fresh $(q,k)$-patch structures in the limit.

\begin{lemma}[Distribution of shortcut lengths]\label{shortcut-length-distribution}
Fix a node $u$ in $G_n$, and let $v$ be the endpoint of one of the $q$ outgoing shortcuts of $u$. Then
\[
\mathbb P[d(u,v)<d]
=
\frac{\sum_{\substack{w\neq u\\ d(u,w)<d}} d(u,w)^{-\ell}}
{\sum_{w\neq u} d(u,w)^{-\ell}}.
\]
Moreover, as $n\to\infty$,
\begin{itemize}
    \item when $\ell<2$,
    \[
    \mathbb P[d(u,v)<d]\to 0
    \qquad\text{for every }d=o(n),
    \]
    \item when $\ell=2$,
    \[
    \mathbb P[d(u,v)<d]\to 0
    \qquad\text{whenever }\frac{\log d}{\log n}\to 0.
    \]
    and more generally, if
    \[
    \frac{\log d}{\log n}\to c\in[0,1],
    \]
    then
    \[
    \frac{c}{4}
    \;\le\;
    \liminf_{n\to\infty}\mathbb P[d(u,v)<d]
    \;\le\;
    \limsup_{n\to\infty}\mathbb P[d(u,v)<d]
    \;\le\;
    4c.
    \]
\end{itemize}
\end{lemma}

\begin{proof}
Using a loose lower bound of $\sum_{j=1}^{n-1} j\cdot j^{-\ell}$ for \eqref{corner} and an upper bound of $\sum_{j=1}^{n-1} 4j\cdot j^{-\ell}$ for \eqref{center}, similarly to \cite{kleinberg}, we get
\begin{align*}
    \mathbb P[d(u,v)<d]
    &= \frac{\sum_{\substack{w\neq u\\ d(u,w)<d}} d(u,w)^{-\ell}}
    {\sum_{w\neq u} d(u,w)^{-\ell}}
    \le \frac{4\sum_{j=1}^{d-1} j^{1-\ell}}{\sum_{j=1}^{n-1} j^{1-\ell}}.
\end{align*}
Bounding the sums by integrals as in \cite{kleinberg}, we obtain
\begin{align*}
    \mathbb P[d(u,v)<d]
    &\le \frac{4+4\int_1^{d} j^{1-\ell}\,dj}{\int_1^{n+1} j^{1-\ell}\,dj}
    = \frac{4(2-\ell)+4(d^{2-\ell}-1)}{(n+1)^{2-\ell}-1}.
\end{align*}
Hence, when $\ell<2$ and $d=o(n)$, the last upper bound goes to zero.

For $\ell=2$, the same estimate gives
\begin{align*}
    \mathbb P[d(u,v)<d]
    &\le \frac{4+4\int_1^{d} j^{-1}\,dj}{\int_1^{n+1} j^{-1}\,dj}
    = \frac{4+4\log d}{\log(n+1)}.
\end{align*}
Thus, if $\log d/\log n\to 0$, then $\mathbb P[d(u,v)<d]\to 0$.
Similarly, using the corresponding lower bound
\(
\mathbb P[d(u,v)<d]
\ge
\frac{\sum_{j=1}^{d-1} j^{1-\ell}}{4\sum_{j=1}^{n-1} j^{1-\ell}},
\)
when $\ell=2$ we get
\begin{align*}
    \mathbb P[d(u,v)<d]
    &\ge \frac{\int_1^{d} j^{-1}\,dj}{4+4\int_1^n j^{-1}\,dj}
    = \frac{\log d}{4+4\log n}.
\end{align*}
Therefore, if $\log d/\log n\to c\in[0,1]$, then
\[
\frac{c}{4}
\;\le\;
\liminf_{n\to\infty}\mathbb P[d(u,v)<d]
\;\le\;
\limsup_{n\to\infty}\mathbb P[d(u,v)<d]
\;\le\;
4c.
\]
\end{proof}

The next corollary is the main structural consequence we will use later.

\begin{lemma}[No local shortcut cycles for $\ell\le 2$]\label{lem:no-local-shortcut-cycle}
Fix $r\ge 1$, and let $o_n$ be a uniformly chosen root in $G_n$. If $\ell\le 2$, then
\[
\mathbb P\bigl[B_r(G_n,o_n)\text{ contains a cycle with a shortcut edge}\bigr]\to 0.
\]
\end{lemma}

\begin{proof}
Fix $r\ge 1$, and let $D_r:=2kr$. Fix $\eta>0$. By Lemma~\ref{prelim-4},
\(
\sup_n \mathbb E|B_r(G_n,o_n)|<\infty.
\)
Hence, by Markov's inequality, there exists $M=M(r,\eta)$ such that
\(
\sup_n \mathbb P\bigl(|B_r(G_n,o_n)|>M\bigr)<\eta.
\)
Let
\(
E_n:=\{|B_r(G_n,o_n)|\le M\}.
\)
Work on the event $E_n$. Run the BFS exploration of $B_r(G_n,o_n)$, and for each stage $t$, let $S_t$ be the set of nodes exposed before stage $t$. Define
\[
N_{D_r}(S_t):=\{u\in V(G_n): \exists v\in S_t \text{ such that } d(u,v)\le D_r\}.
\]
Since $D_r$ is fixed and $|S_t|\le M$, there exists a constant $C=C(r,k,M)$ such that
\(
|N_{D_r}(S_t)|\le C
\)
for every possible $S_t$.
We claim that, if at every stage $t$ neither of the following occurs:
\begin{itemize}
    \item[(i)] an outgoing shortcut is exposed for the first time at stage $t$, its source lies in $S_t$, and its target lies in $N_{D_r}(S_t)$
    \item[(ii)] an incoming shortcut into some node of $S_t$ is exposed for the first time at stage $t$, and its source lies in $N_{D_r}(S_t)$,
\end{itemize}
then every shortcut first exposed by the BFS leads to a fresh patch, and therefore no cycle in $B_r(G_n,o_n)$ can contain a shortcut edge. Indeed, here, the shortcuts jump farther than $2kr$ in lattice distance from the previously exposed set. Since every lattice path of graph length at most $r$ moves by at most $kr$ in lattice distance, the radius-$r$ lattice neighborhood around the new endpoint is disjoint from the previously exposed part.  

It remains to bound the probability of (i) and (ii). For (i), there are at most $q|S_t|\le qM$ outgoing shortcuts whose source lies in $S_t$. For any $v\in S_t$, the probability that one of its outgoing shortcuts lands in $N_{D_r}(S_t)$ is at most
\[
q
\frac{\sum_{u\in N_{D_r}(S_t)} d(v,u)^{-\ell}}
{\sum_{w\neq v} d(v,w)^{-\ell}}
\le
\frac{qC}{\sum_{w\neq v} d(v,w)^{-\ell}}.
\]
By the same estimates as in the proof of Lemma~\ref{shortcut-length-distribution},
\(
\sum_{w\neq v} d(v,w)^{-\ell}
=
\Theta\!\left(\sum_{j=1}^{n-1} j^{1-\ell}\right)
\)
uniformly in $v$. Therefore,
\[
\mathbb P\bigl[(i)\text{ occurs at stage }t\bigr]
=
O\!\left(\frac{1}{\sum_{j=1}^{n-1} j^{1-\ell}}\right).
\]

For (ii), fix $u\in S_t$. An incoming shortcut into $u$ from a source in $N_{D_r}(S_t)$ can only originate from a node $v\in N_{D_r}(S_t)\setminus\{u\}$, and each such $v$ contributes at most $q$ outgoing shortcuts. Hence, we have
\begin{align*}
    \mathbb P\bigl[u\text{ receives such an incoming shortcut at stage }t\bigr]
    &\le
    \sum_{v\in N_{D_r}(S_t)\setminus\{u\}}
    q\cdot \frac{d(v,u)^{-\ell}}{\sum_{w\neq v} d(v,w)^{-\ell}}
    \\
    &\le
    \frac{qC}{\inf_{v\in V(G_n)}\sum_{w\neq v} d(v,w)^{-\ell}}.
\end{align*}
Using, again, the bound
\(
\sum_{w\neq v} d(v,w)^{-\ell}
=
\Theta\!\left(\sum_{j=1}^{n-1} j^{1-\ell}\right)
\)
uniformly in $v$, we obtain
\[
\mathbb P\bigl[u\text{ receives such an incoming shortcut at stage }t\bigr]
=
O\!\left(\frac{1}{\sum_{j=1}^{n-1} j^{1-\ell}}\right).
\]
Since $|S_t|\le M$, a union bound over $u\in S_t$ yields
\[
\mathbb P\bigl[(ii)\text{ occurs at stage }t\bigr]
=
O\!\left(\frac{1}{\sum_{j=1}^{n-1} j^{1-\ell}}\right).
\]
When $\ell<2$, this is $O(n^{-(2-\ell)})\to 0$, and when $\ell=2$, it is $O((\log n)^{-1})\to 0$.

Since on $E_n$ the BFS has at most $M$ stages, a union bound gives
\[
\mathbb P\bigl[(i)\text{ or }(ii)\text{ occurs at some stage} \,\bigm|\, E_n\bigr]\to 0.
\]
Putting everything together, this gives
\[
\mathbb P\bigl[B_r(G_n,o_n)\text{ contains a cycle with a shortcut edge}\bigr]
\le
\mathbb P(E_n^c)
+
\mathbb P\bigl[(i)\text{ or }(ii)\text{ occurs at some stage }t \,\bigm|\, E_n\bigr].
\]
and
\[
\limsup_{n\to\infty}
\mathbb P\bigl[B_r(G_n,o_n)\text{ contains a cycle with a shortcut edge}\bigr]
\le \eta.
\]
Since $\eta>0$ was arbitrary, the claim follows.
\end{proof}

Lemma~\ref{shortcut-length-distribution} and Lemma~\ref{lem:no-local-shortcut-cycle} have the following consequences.

\begin{remark}[Mark separation in the subcritical regime]\label{mark-change}
For $\ell<2$, whenever one traverses a shortcut, the mark (the coordinates normalized by $n$) changes with high probability. Indeed, Lemma~\ref{shortcut-length-distribution} shows that the endpoint of a shortcut is not within distance $o(n)$ of its starting point with high probability.
\end{remark}

Remark~\ref{mark-change} indicates that for $\ell<2$, the marks of the nodes at the two ends of each shortcut in a (reduced) $(q,k)$-patch differ with high probability (see Figure~\ref{q-k patch}). When $\ell=2$, on the other hand, the marks of two ends of a shortcut may remain the same or change (i.e. the distance between them could be sublinear or linear in $n$). For $\ell>2$, shortcuts link nearby nodes, so the endpoints no longer look like separate grids and, hence, every node in the local neighborhood of any node has the same mark. The local limit then remains close to the original model rather than simplifying to a structure like Figure~\ref{q-k patch}.

\section{Local Limit of the Kleinberg Model in Sub-critical Regime: Proof of Theorem~\ref{main-thm-kl-1}} \label{sec:proof-main-thm-kl-1}

To show that $(G_n, o_n)$ converges locally in probability to $(G, o)$, by the criterion given by Definition \ref{local-convergence-def}, it suffices to prove the following lemma.

\begin{lemma}[Local Convergence of the Kleinberg Model] \label{main-lemma-kl-1} For all $r > 0, \epsilon > 0$ and $H_* = B_r(H_*, o_*, M_*) \sim \mu$ that can be sampled with a positive probability, i.e. $\mu_r^{\epsilon}(H_*, o_*, M_*)>0$,
\begin{equation*} 
\frac{1}{n^2}\sum_{o_n\in V(G_n)}  \mathbbm{1}_r^\epsilon\big((G_n,o_n)\simeq H_* \big)\overset{\mathbb P}{\to}\mu\Big(  \mathbbm{1}_r^\epsilon\big((G,o)\simeq H_*)\big)\Big) \quad \text{ as $n \to \infty.$}
\end{equation*}
\end{lemma}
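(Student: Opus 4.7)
The strategy is to mirror the second moment argument used for the Watts--Strogatz model: prove the two analogous statements (first moment and variance) and conclude via Chebyshev's inequality. Specifically, the plan is to establish that $\mu_{r,n}^\epsilon((G_n,o_n)\simeq H_*)\to \mu_r^\epsilon((G,o)\simeq H_*)$ and that $\mathrm{Var}\big(\tfrac{1}{n^2}\sum_{o_n}\mathbbm{1}_r^\epsilon((G_n,o_n)\simeq H_*)\big)\to 0$. As before, I would order and mark both the finite and limit graphs via a BFS that prioritizes lattice edges, then incoming shortcuts, then outgoing shortcuts, and reduce the claim to showing, for a fixed $\overline{H}_*$, that the probability of an ordered match in $G_n$ converges to the limit measure of $\overline{H}_*$.

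For the first moment, I would walk through the BFS one node at a time and compare the conditional distribution at step $i$ in $G_n$ to that in $G$. Each new node $v_{i,n}$ contributes three pieces of information to match: (i) its mark, (ii) the number of incoming shortcuts, and (iii) the marks at the endpoints of its outgoing shortcuts. For (i), the discrete marks $(x_u/n,y_u/n)$ become dense in $[0,1]^2$, and the $\epsilon$-tolerance in $d_\Xi$ lets us partition $M$ into finitely many cells of diameter less than $\epsilon$ so that the uniform distribution on lattice cells converges to the uniform distribution on $M$. For (ii), the binomial-type count of incoming shortcuts has mean $\Lambda_{u}^{(n)}\to \Lambda_m$ as the lattice mark converges to the continuous mark $m$, and a standard Poisson approximation of independent thinned trials gives $\mathrm{Poi}(\Lambda_m)$ in the limit. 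For (iii), the discrete outgoing distribution $[d(u,v)]^{-\ell}/\sum_w [d(u,w)]^{-\ell}$ is precisely the Riemann sum approximation of $p_{\text{out},m}$, so after the $\epsilon$-coarsening of marks the match probability converges. The reduced versus full patch distinction arises automatically: a node first reached through an incoming shortcut has $q-1$ outgoing shortcuts left to sample, and Bayes' rule on the discrete side recovers $p_{\text{in},m}=(q/\Lambda_m)\, p_{\text{out},m}$. Crucially, one needs an analogue of Lemma \ref{no cycle} asserting that shortcuts do not close local cycles; this is supplied by Lemma \ref{first lemma} (referenced in the excerpt), which for $\ell\leq 2$ forces shortcut endpoints to lie in distant independent regions of the lattice with probability tending to $1$. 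Combined with $|B_r(G_n,o_n)|=o(n^2)$ from Lemmas \ref{prelim-2} and \ref{prelim-4}, this ensures the finitely many shortcuts inside $B_r$ do not collide with the already-explored neighborhood.

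For the second moment, I would pick $v$ arbitrarily and $w$ uniformly among the remaining $n^2-1$ nodes, and split on whether $\mathrm{dist}_{G_n}(v,w)\leq 2r$ or not, as was done in the Watts--Strogatz proof. The near case has probability at most $\mathbb{E}[|B_{2r}(G_n,v)|]/(n^2-1)$, which vanishes by Lemmas \ref{prelim-2} and \ref{prelim-4}. For the far case, conditional on $B_r(G_n,v)\simeq H_*$ and $\mathrm{dist}(v,w)>2r$, the BFS from $w$ explores a disjoint portion of the lattice; the only dependence with the region around $v$ comes from the $q$ outgoing shortcuts per node being assigned uniformly among all remaining nodes, but because $|B_r(G_n,v)|=o(n^2)$ the normalization $\sum_w [d(u,w)]^{-\ell}$ changes by a multiplicative $1+o(1)$ factor, so repeating the first-moment argument inside $G_n\setminus B_r(G_n,v)$ yields the same limiting probability for $B_r(G_n,w)\simeq H_*$. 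This gives asymptotic factorization of the joint indicator, which is exactly what makes the variance vanish.

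The main obstacle I expect is the coupling between the discrete lattice marks of $G_n$ and the continuous marks of $G$, carried out simultaneously with the shortcut endpoint distribution. Making rigorous the statement that the discrete probabilities $[d(u,v)]^{-\ell}/\sum_w [d(u,w)]^{-\ell}$ converge (as measures on the normalized lattice) to the continuous density $p_{\text{out},m}$ requires handling the singularity of $\|m-x\|_1^{-\ell}$ at $x=m$ when $\ell=2$; this is where the borderline case is most delicate, since the normalization $\sum_w d(u,w)^{-\ell}$ grows like $\log n$ rather than a power of $n$. A careful cell-partition argument at scale $\epsilon$, combined with the $\epsilon$-slack in Definition \ref{local-convergence-def}, should absorb this by sending $n\to\infty$ first and $\epsilon\to 0$ afterwards. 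Once the coupling is set up, the Poisson convergence of incoming degrees, the Riemann-sum convergence of outgoing mark distributions, and the no-short-cycle lemma combine as in the Watts--Strogatz proof to complete both moment estimates.
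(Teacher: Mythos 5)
Your proposal follows essentially the same route as the paper: a second-moment (Chebyshev) argument reduced to a first-moment convergence lemma and a variance-vanishing lemma, with a BFS ordering prioritizing lattice edges then incoming then outgoing shortcuts, a coupling between the discrete lattice marks of $G_n$ and the continuous marks of $G$, a Poisson limit for the incoming-shortcut count, a Riemann-sum argument identifying the outgoing endpoint distribution with $p_{\text{out},m}$, and the crucial use of Lemma~\ref{first lemma} to keep shortcuts out of local cycles. The only cosmetic difference is that you propose discretizing $M$ into cells of diameter $\epsilon$ and then sending $\epsilon\to 0$, whereas the paper couples the continuous marks to the natural $1/n\times 1/n$ grid (so the discretization error is $O(1/n)$ and vanishes for each fixed $\epsilon$, giving the sandwich $\mu_r^{\epsilon-2/n}\leq\hat\mu_{r,n}^\epsilon\leq\mu_r^{\epsilon+2/n}$); note also that in Definition~\ref{local-convergence-def} one never sends $\epsilon\to 0$ — convergence is required for each fixed $\epsilon$ — but the substance of your argument is the same once this is corrected.
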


Again, we will use the Second Moment Method to formally prove convergence through the next two lemmas: the proof of the first lemma appears in this section, while the second lemma’s proof is deferred to Appendix~\ref{proof-kl-2moment}.

\begin{lemma}[First Moment of the Kleinberg Model] \label{kl-1moment} Fix any $r > 0$, $\epsilon > 0$, and $H_* = B_r(H_*, o_*, M_*)$ such that $\mu_r^{\epsilon}(H_*, o_*, M_*)>0$. Then,
\begin{equation*} 
\mu_{n,r}^\epsilon\big((G_n, o_n) \simeq H_*\big) \to \mu_{r}^\epsilon\big((G,o) \simeq H_*\big) \quad \text{as $n \to \infty$.}
\end{equation*}
\end{lemma}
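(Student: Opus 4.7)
The approach mirrors the proof of Lemma~\ref{ws-1moment} but must additionally handle the continuous node marks and the mark-dependent shortcut distributions. By symmetry it is enough to show that the averaged probability $\frac{1}{n^2}\sum_{v}\mathbb{P}\big[(G_n,v)\simeq_r^\epsilon H_*\big]$ converges to $\mu_r^\epsilon((G,o)\simeq H_*)$. Exploring both graphs by the BFS rule fixed above, I would factor this probability across BFS steps, so that at step $i$ we must verify (a) that the $i$-th node has the correct number of outgoing and incoming shortcuts, (b) that the endpoints of those shortcuts land in fresh regions whose marks fall within $\epsilon$ of the marks prescribed by $\overline{H}_*$, and (c) that no shortcut loops back into the already-explored neighborhood. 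Since the deterministic $k$-lattice base is common to $G_n$ and to the $(q,k)$-patch, and since the mark $o_n/n$ of the uniform root is asymptotically uniform on $M=[0,1]^2$, matching the mark law of the limit root, the whole task reduces to verifying the conditional shortcut laws at each BFS step.

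For (a) and (b), I would show that when a node with normalized coordinate tending to $m\in M$ is explored, its number of incoming shortcuts converges in law to $\mathrm{Poi}(\Lambda_m)$, and that the marks of outgoing (resp.\ incoming) shortcut endpoints are asymptotically distributed as $p_{\text{out},m}$ (resp.\ $p_{\text{in},m}$). The convergence $\Lambda_u^{(n)}\to\Lambda_m$ is a Riemann-sum computation exploiting the two-sided bounds \eqref{corner} and \eqref{center}; because the incoming shortcut count is a sum of $n^2-1$ independent Bernoulli trials, each of probability $O(n^{-\min(\ell,2)})$, the classical Poisson limit theorem supplies the distributional convergence. The same Riemann-sum argument shows that, for any target mark $m_{\text{out}}$ and any $\epsilon$-ball $B_\epsilon(m_{\text{out}})\subset M$, the probability that a particular outgoing shortcut from a node of normalized mark $m$ lands in the preimage of $B_\epsilon(m_{\text{out}})$ converges to $\int_{B_\epsilon(m_{\text{out}})} p_{\text{out},m}(x)\,dx$; the incoming analog then follows via the identity $p_{\text{in},m}(y)=q\,p_{\text{out},y}(m)/\Lambda_m$ with size-biasing. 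Exactly as in Lemma~\ref{ws-1moment}, upper and lower $\mathrm{Bin}(\cdot,\cdot)$ envelopes absorb the conditioning on the already-explored BFS history, which only removes $O(1)$ candidate endpoints by Lemma~\ref{prelim-4}.

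Step (c) is the analog of Lemma~\ref{no cycle} and is handled by invoking Lemma~\ref{first lemma}: for $\ell\le 2$, each shortcut endpoint is separated from its source by lattice distance $\Omega(n^{\delta})$ for some $\delta>0$, while $|B_r(G_n,o_n)|$ is $O(1)$ in probability by Lemma~\ref{prelim-4}. A union bound over the (bounded in expectation) shortcuts discovered in the first $r$ BFS layers then shows that the chance that any of them lands inside the already-explored neighborhood, or within lattice distance $O(1)$ of another endpoint, vanishes as $n\to\infty$. This independence of the endpoint-rooted subgraphs is precisely what justifies treating them as independent $(q,k)$-patches in the limit.

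The principal obstacle will be coordinating the continuous mark tolerance $\epsilon$ with the boundary non-uniformity of $\Lambda_u^{(n)}$ and with the coupling between discrete lattice coordinates and continuous marks. The boundary correction is asymptotically negligible because the uniform root lies at lattice distance $\omega(1)$ from the boundary of $[0,n]^2$ with probability $1-o(1)$, and each subsequent BFS vertex is reached either by a lattice edge of displacement $O(r)$ or by a long shortcut which by Lemma~\ref{first lemma} drops us into another independent interior region with high probability. Combining the mark convergence, the Poisson limit for shortcut counts, and the tree-likeness of shortcuts, the BFS-factored probability converges term by term to the corresponding factorization defining $\mu_r^\epsilon((G,o)\simeq H_*)$, completing the proof.
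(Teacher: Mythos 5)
Your proposal mirrors the paper's proof in all its essential components: BFS factorization conditioned on the exploration history, Riemann-sum convergence of the discrete shortcut-endpoint distribution to $p_{\text{out},m}$ and $p_{\text{in},m}$, Poisson convergence of the incoming-shortcut count to $\mathrm{Poi}(\Lambda_m)$, and locality/tree-likeness of shortcuts via Lemma~\ref{first lemma}. The paper packages the discrete-to-continuous mark passage slightly differently, introducing an explicit intermediate coupling $\hat{G}_n$ (rounding the limit's continuous marks to the $1/n$-grid) and then passing through $\hat{\mu}_{r,n}^{\epsilon\pm 2/n}$ by continuity in $\epsilon$, and it proves the Poisson convergence by hand in Lemma~\ref{incom-pois-conv} (tracking the $\textbf{Out}_{u,i}$ corrections from the conditioned BFS history) rather than citing a Poisson limit theorem directly--but these are execution-level choices rather than a different route, and your sketch would go through if fleshed out with that bookkeeping.
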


\begin{lemma}[Second Moment of the Kleinberg Model] \label{kl-2moment}Fix any $r > 0$, $\epsilon > 0$, and $H_* = B_r(H_*, o_*, M_*)$ such that $\mu_r^{\epsilon}(H_*, o_*, M_*)>0$. Then,
\begin{equation*} 
\text{Var}\left(\frac{1}{n^2}\sum_{o_n\in V(G_n)}  \mathbbm{1}_r^\epsilon((G_n,o_n)\simeq H_*) \right) \to 0 \quad \text{as $n \to \infty$.}
\end{equation*}
\end{lemma}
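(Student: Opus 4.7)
The plan is to mirror the second-moment argument used in the proof of Lemma~\ref{ws-2moment}, adapting it to the marked setting and to the distance-dependent shortcut kernel of the Kleinberg model. Expanding the square and using the symmetry of the sum, it suffices to show that for two nodes $v,w$ chosen uniformly and independently from $V(G_n)$,
\begin{equation*}
\mathbb{P}_{v,w}\bigl[B_r(G_n,v) \simeq H_* \cap B_r(G_n,w) \simeq H_*\bigr] \longrightarrow \bigl(\mu_r^\epsilon(H_*)\bigr)^2,
\end{equation*}
which combined with Lemma~\ref{kl-1moment} yields the vanishing of the variance. The diagonal contribution from $v = w$ is $O(1/n^2)$ and negligible.

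I would split the joint event according to the graph distance between $v$ and $w$. When $\text{dist}_{G_n}(v,w) \leq 2r$, Lemma~\ref{prelim-2} together with Lemma~\ref{prelim-4} bounds $\mathbb{E}[|B_{2r}(G_n,v)|]$ by a constant independent of $n$, so the probability that a uniformly chosen $w$ lies within graph distance $2r$ of $v$ is $O(1/n^2)$ and contributes nothing to the limit. On the complement event $\text{dist}_{G_n}(v,w) > 2r$, the balls $B_r(G_n,v)$ and $B_r(G_n,w)$ are vertex-disjoint, and I would aim to show asymptotic independence of the two isomorphism events.

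To establish asymptotic independence, define $G_n^{v,r} := G_n - B_r(G_n,v)$, so that on the event $\text{dist}_{G_n}(v,w) > 2r$ the ball $B_r(G_n,w)$ coincides with $B_r(G_n^{v,r},w)$. I would then re-run the coupling argument from Lemma~\ref{kl-1moment} on the reduced graph $G_n^{v,r}$. Two perturbations arise: (i) the normalization $\sum_{u' \neq u}[d(u,u')]^{-\ell}$ controlling the outgoing-shortcut kernel, and the expected incoming-shortcut count $\Lambda_u^{(n)}$, each change by a multiplicative $1 + o(1)$ factor, since the deleted set $B_r(G_n,v)$ has size $o(n^2)$ almost surely while the total mass is of order $n^{2-\ell}$ (or $\log n$ at $\ell = 2$); and (ii) the probability that any shortcut emitted from the finite expected size set $B_r(G_n,w)$ lands inside $B_r(G_n,v)$ is $o(1)$, since for $\ell \leq 2$ shortcut endpoints are distributed broadly over the lattice. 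Together these imply that the conditional law of $B_r(G_n^{v,r},w)$ agrees in the limit with the unconditional law of $B_r(G_n,w)$, so the joint probability factorizes and the variance vanishes.

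The main obstacle will be establishing the kernel-perturbation estimate uniformly over the random set $B_r(G_n,v)$ and over all positions $u$ of the lattice that may serve as shortcut origins in $B_r(G_n,w)$. In particular, one must control the worst-case relative deviation of $\sum_{u'}[d(u,u')]^{-\ell}$ and of $\Lambda_u^{(n)}$ after deleting $o(n^2)$ nodes, and ensure that the BFS exploration from $w$ — which produces only finitely many shortcuts in expectation — avoids $B_r(G_n,v)$ entirely with probability $1 - o(1)$. Handling the mark coupling with $\epsilon$-tolerance under this modified kernel is the most delicate step, but once the perturbation bounds are in hand, the remaining bookkeeping follows the Watts-Strogatz template, and the proof of the second moment concludes as in the WS case.
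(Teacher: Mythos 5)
Your outline matches the paper's proof in Appendix~A almost step for step: expand the square, discard the diagonal, discard the close pairs $\operatorname{dist}_{G_n}(v,w)\leq 2r$ using a bound on $\mathbb{E}[|B_{2r}(G_n,v)|]$, and on the far pairs re-run the first-moment coupling with the ball around $v$ excised (the paper packages this last step as Lemma~\ref{conditional-independence-lemma}, which re-derives the analogues of Lemmas~\ref{mark-conv} and \ref{incom-pois-conv} conditional on the event $\text{\textbf{B}}$). The two perturbations you flag — the kernel normalization and the shortcut-avoidance of $B_r(G_n,v)$ — are exactly the ones the paper controls there, the latter via Lemma~\ref{first lemma}.

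However, your stated perturbation bound is too weak to close the argument as written. You claim the deleted set $B_r(G_n,v)$ has size $o(n^2)$ while the normalizing mass $\sum_{u'\neq u}[d(u,u')]^{-\ell}$ is $\Theta(n^{2-\ell})$ (or $\Theta(\log n)$ at $\ell=2$); but then the relative change is $o(n^2)/n^{2-\ell}=o(n^{\ell})$, which does not go to zero for any $\ell>0$. The bound you actually need, and which Lemma~\ref{prelim-4} supplies, is that $|B_r(G_n,v)|$ has expectation bounded by a constant and is therefore $o(f(n))$ for \emph{any} growing $f$ — in particular $o(n^{2-\ell})$, or $o(\log n)$ at $\ell=2$. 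Since each excised node contributes at most $1$ to the normalization, it is this $O(1)$-size bound (not $o(n^2)$) that makes the relative perturbation $o(1)$. With that correction your argument is sound and coincides with the paper's.
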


\begin{proof}[Proof of Lemma~\ref{main-lemma-kl-1}]
    Given Lemma~\ref{kl-1moment} and \ref{kl-2moment}, the proof of  Lemma~\ref{main-lemma-kl-1} is identical to the proof of Lemma~\ref{main-lemma-ws}. 
\end{proof}

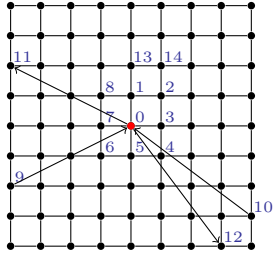
\begin{figure}
    \centering
\begin{tikzpicture}[scale=0.45]
\def\n{9} 
\foreach \i in {1,...,\n} {
    \foreach \j in {1,...,\n} {
        \node [sdot] (\i-\j) at (\i,-\j) {};
    }
}
\node [sdot, red] at (5, -5) {};
\node [blue-shortcut, thick] at (5.3, -4.7) {\tiny $0$};
\node [blue-shortcut, thick] at (5.3, -3.7) {\tiny $1$};
\node [blue-shortcut, thick] at (6.3, -3.7) {\tiny $2$};
\node [blue-shortcut, thick] at (6.3, -4.7) {\tiny $3$};
\node [blue-shortcut, thick] at (6.3, -5.7) {\tiny $4$};
\node [blue-shortcut, thick] at (5.3, -5.7) {\tiny $5$};
\node [blue-shortcut, thick] at (4.3, -5.7) {\tiny $6$};
\node [blue-shortcut, thick] at (4.3, -4.7) {\tiny $7$};
\node [blue-shortcut, thick] at (4.3, -3.7) {\tiny $8$};
\node [blue-shortcut, thick] at (1.3, -6.7) {\tiny $9$};
\node [blue-shortcut, thick] at (9.4, -7.7) {\tiny $10$};
\node [blue-shortcut, thick] at (1.4, -2.7) {\tiny $11$};
\node [blue-shortcut, thick] at (8.4, -8.7) {\tiny $12$};
\node [blue-shortcut, thick] at (5.4, -2.7) {\tiny $13$};
\node [blue-shortcut, thick] at (6.4, -2.7) {\tiny $14$};
\foreach \i in {1,...,\n} {
    \foreach \j [count = \k] in {2, ..., \n} {
            \draw (\i-\k) -- (\i-\j);
            \draw (\k-\i) -- (\j-\i);
        }}
\draw[->, black] (5-5) -- (8-9);
\draw[->, black] (5-5) -- (1-3);
\draw [->, black] (1-7) to (5-5);
\draw [->, black] (9-8) to (5-5);
\end{tikzpicture}
    \caption{Partial ordering of $K(9, 2, 1, \ell)$.}
    \label{lattice order}
\end{figure}

\subsection{Proof of Lemma~\ref{kl-1moment} via Coupling Marks}

Notice that the mark of each node in $G_n$ would be its integer coordinate normalized by $n$. Therefore, the marks would have a discrete distribution, while the marks in the limit have a continuous distribution over $[0, 1] \times [0, 1]$ as defined in Section~\ref{local-limit-kl-1}. We couple the continuous marks to the discrete version as follows: 

\subsection*{Coupling} \label{coupling-def} Divide $[0, 1] \times [0, 1]$ into $n^2$ equal $1/n \times 1/n$ \textit{small squares} and  construct the ordered graph $B_r(\hat{G}_n,\hat{o}_n)$ from $B_r(\overline{G}, o)$ so that its unmarked version is the same as $B_r(\overline{G}, o)$. Additionally the mark of $i$-th node in $\hat{G}_n$ is 
\begin{equation*}
    M(\hat{v}_{i, n}) = \left(\frac{\lfloor xn \rfloor}{n}, \frac{\lfloor yn \rfloor}{n}\right),
\end{equation*}
where $M(v_{i}) = (x, y)$ is the mark of $i$-th node in $\overline{G}$ (roots corresponds to the $0$-th node in both graphs). One can think of this as choosing $M(\hat{v}_{i, n}) $ to be the top left corner of the small square $M(v_{i})
$ falls to in $[0, 1] \times [0, 1]$. Let $\hat{\mu}_n$ be the marginal distribution of $\hat{G}_n$. 

The next lemma is the key result implying that the difference between the (discrete) distribution $\mu_n$  and $\hat{\mu}_n$ obtained from $\mu$ goes to $0$. 

\begin{lemma} \label{coupling} Fix $\epsilon > 0$. For any rooted graph $H_*$ such that $\mu_r^\epsilon((\overline{G},o) = \overline{H_*}) > 0$, 
\begin{equation*}
    \lim_{n\to\infty} \left| \mu_{r, n}^\epsilon((\overline{G_n},o_n) = \overline{H_*}) -\hat{\mu}_{r,n}^\epsilon((\hat{G}_n,\hat{o}_n) = \overline{H_*}) \right| = 0.
\end{equation*}
\end{lemma}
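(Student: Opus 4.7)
The plan is to match the measures $\mu_{r,n}^\epsilon$ and $\hat\mu_{r,n}^\epsilon$ factor-by-factor. Both are supported on rooted marked graphs whose marks live on the grid $\{(i/n, j/n)\}_{0 \le i,j < n}$, and the $\epsilon$-tolerance in $\mathbbm{1}_r^\epsilon$ absorbs mark errors of size $O(r/n)$. Hence for $n$ sufficiently large, the events $(\overline{G_n}, o_n) = \overline{H_*}$ and $(\hat G_n, \hat o_n) = \overline{H_*}$ compare identical combinatorial data up to sub-tolerance mark perturbations. By Lemma~\ref{first lemma} and Remark~\ref{no-local-cycle-remark}, no shortcut appears in a cycle of $B_r$ in either model with probability $1 - o(1)$, so the probability of matching $\overline{H_*}$ decomposes into independent contributions from the root's mark, the deterministic lattice pattern inside each patch, the number of incoming shortcuts at each visited node, and the endpoint of each outgoing shortcut.

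The first three factors match straightforwardly. In $G_n$ the root's mark is uniform on the grid by construction, while in $\hat G_n$ it is the rounding of the uniform mark on $[0,1]^2$ built into the $(q,k)$-patch process, giving the same uniform distribution on the grid. Interior roots (those at lattice distance greater than $r$ from the boundary) account for a $1 - O(r/n)$ fraction, and for those the lattice structure of $B_r$ in $G_n$ coincides with the infinite-lattice ball used in the $(q,k)$-patch of $\hat G_n$. The number of incoming shortcuts at a node with mark $m$ is $\text{Bin}(n^2 - O(1), \Lambda_m^{(n)}/n^2)$ in $G_n$, which converges to $\text{Poi}(\Lambda_m)$ by Lemma~\ref{prelim-2}, matching the Poisson construction of $\hat G_n$.

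The key step is the endpoint distribution of outgoing shortcuts. From a node with mark $m_u$, the probability in $G_n$ that a shortcut hits the vertex with coordinate $\hat m_v = (i/n, j/n)$ equals $d(u,v)^{-\ell}/Z_u$ with $Z_u = \sum_{w \neq u} d(u,w)^{-\ell}$; in $\hat G_n$ the corresponding event is that the continuous mark of the endpoint lands in the small square $S_{\hat m_v}$ of side $1/n$, with probability $\int_{S_{\hat m_v}} p_{\text{out}, m_u}(x)\,dx$. Substituting $d(u,v) = n\|m_u - \hat m_v\|_1$ and recognizing $Z_u$ as $n^{2-\ell}$ times a Riemann sum of $\|m_u - y\|_1^{-\ell}$ over the grid, a standard quadrature estimate gives
\begin{equation*}
  \int_{S_{\hat m_v}} p_{\text{out}, m_u}(x)\,dx \;=\; (1 + o(1))\cdot \frac{d(u,v)^{-\ell}}{Z_u},
\end{equation*}
uniformly over pairs $(u,v)$ whose normalized distance $\|m_u - \hat m_v\|_1$ is bounded below by any fixed $\delta > 0$.

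The main obstacle is the singularity of $\|m_u - x\|_1^{-\ell}$ at $x = m_u$, where the Riemann approximation fails. This is precisely the regime controlled by Lemma~\ref{first lemma}: for $\ell < 2$, shortcuts of length $o(n)$ occur with vanishing probability, and for $\ell = 2$ the probability of any shortcut hitting $B_r$ at constant lattice distance is $O(1/\log n) \to 0$. Consequently, the `short-shortcut' configurations contribute $o(1)$ to both $\mu_{r,n}^\epsilon$ and $\hat\mu_{r,n}^\epsilon$ and may be discarded. Multiplying the matched factors across all nodes and shortcuts of $\overline{H_*}$ -- a finite product since $|B_r|$ is bounded in probability by Lemma~\ref{prelim-2} -- and sending $n \to \infty$ yields the claimed equality of limits.
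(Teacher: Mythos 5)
Your factor-by-factor decomposition mirrors the paper's closed-form approach (the paper writes $\hat\mu_{r,n}^0$ and $\mu_{r,n}^0$ as products of per-node conditional probabilities along the BFS order and matches them term by term), and the quadrature idea for the outgoing-shortcut endpoint is essentially the paper's Riemann-integral step passing from the continuous mark density to the discrete grid. However, there are two genuine gaps.

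First, the Poisson convergence of the in-degree cannot be cited to Lemma~\ref{prelim-2}, which only proves $\Lambda_u^{(n)} \le 4q$ and local finiteness --- it is not a limit theorem. Your characterisation of the in-degree as $\mathrm{Bin}(n^2 - O(1), \Lambda_m^{(n)}/n^2)$ is also not right: each other node $w$ shoots $q$ shortcuts, each hitting $u$ with a $w$-dependent probability $d(u,w)^{-\ell}/Z_w$, so the in-degree is a sum of heterogeneous Bernoulli variables, and in addition this count must be taken conditional on $T_{i-1}$, which removes already-allocated outgoing shortcuts from the pool (the paper tracks this via $\mathbf{Out}_{u,i}$ and $\mathbf{S}_i$ in Lemma~\ref{incom-pois-conv}). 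This Poisson approximation with exploration dependencies is the heart of the lemma, not a one-line corollary. Relatedly, you never address the mark distribution of the \emph{source} of an incoming shortcut ($p_{\mathrm{in},m}$ in the $(q,k)$-patch definition); your decomposition lists only ``the endpoint of each outgoing shortcut.'' In the finite graph this source-mark law picks up a correction term from the outgoing shortcuts already committed by $T_{i-1}$ (see Case~2 of Lemma~\ref{mark-conv}), so it is not the same factor as the outgoing case and requires its own argument.

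Second, the $\ell = 2$ case of your quadrature step is incomplete. Your uniformity estimate needs normalized distance $\|m_u - \hat m_v\|_1 \ge \delta$, i.e.\ lattice distance $\ge \delta n$, while the $\ell = 2$ part of Lemma~\ref{first lemma} only controls shortcuts at lattice distance $< k$ for $k = o(\log n)$. In the gap regime $\omega(1) \le d(u,v) \le \delta n$ shortcuts carry a constant-order fraction of probability when $\ell = 2$ (the lemma's second clause gives $\Omega(c)$ for $k = O(n^c)$), so you cannot simply discard them and have to argue separately that this regime is consistent on both sides --- either by showing $\overline{H_*}$ forces a mark separation bounded away from zero (true for $\ell < 2$ by Remark~\ref{mark-change}, but not for $\ell = 2$) or by sharpening the quadrature bound to distances $\omega(1)$. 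As written, the discarding step closes the argument only for $\ell < 2$.
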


Assuming that Lemma~\ref{coupling} holds, we can complete the proof for Lemma~\ref {kl-1moment} using that the difference between the marginal distribution $\hat{\mu}_n$ and $\mu$ goes to $0$. This intuition is formalized next.
 
\begin{proof}[Proof of Lemma~\ref{kl-1moment}]
Fix some $\epsilon>0$. As in the case of the Watts-Strogatz model, for the first moment, it suffices to show that  $$\mu_{r, n}^\epsilon ((\overline{G_n}, o_n) = \overline{H_*}) \to \mu_r^\epsilon((\overline{G},o)= \overline{H_*})  \quad \text{ as $n \to \infty.$}$$
Let $M(\hat{v}_{i, n})$ and $M(v_{i})$ be the marks of $i$-the node of $\hat{G}_n$ and $G$. Notice that $\|M(\hat{v}_{i, n}) - M(v_{i})||_1 \leq \frac{2}{n}$ for all $i$ by coupling. We also have
\begin{align*}
    \mu_r^{\epsilon - \frac{2}{n}}((\overline{G},o)  = \overline{H_*}) 
 \leq \hat{\mu}_{r, n}^{\epsilon}((\hat{G}_n,\hat{o}_n)  = \overline{H_*}) \leq \mu_r^{\epsilon + \frac{2}{n}}((\overline{G},o)  = \overline{H_*}).
\end{align*}
Noting that $\mu_r^{\epsilon}((\overline{G}, o)  = \overline{H_*})$ is a continuous function of $\epsilon$, we get $$\lim_{n \to \infty} |\hat{\mu}_{r, n}^{\epsilon}((\hat{G}_n,\hat{o}_n)  = \overline{H_*}) - \mu_r^{\epsilon}((\overline{G},o)  = \overline{H_*})| = 0.$$
Together with Lemma~\ref{coupling}, $$\lim_{n\to\infty}\mu_{r, n}^\epsilon((\overline{G}_n,o_n)  = \overline{H_*}) =\lim_{n\to\infty}\hat{\mu}_{r,n}^\epsilon((\hat{G}_n,\hat{o}_n)  = \overline{H_*}) =\mu_r^{\epsilon}((\overline{G},o)  = \overline{H_*}),$$
concluding the proof for the convergence of the first moments. 
\end{proof}

Next, we prove Lemma~\ref{coupling}.

\subsection*{Proof of Lemma~\ref{coupling}}

Take any rooted finite graph $H_*$ such that
\(
\mu_r^\epsilon((\overline{G},o)=\overline{H_*})>0.
\)
Define $v_{i,n}$, $\hat v_{i,n}$, and $v_{i,*}$ to be the $i$-th node in
$\overline{G}_n$, $\hat G_n$, and $\overline{H_*}$, respectively. Consider the
following sets of marked, ordered, and rooted graphs isomorphic to $H_*$ such
that their node marks are within $\epsilon$-neighborhood of the corresponding
node marks in $\overline{H_*}$ with respect to the $L_1$ metric and they can be
sampled with positive probability according to $\mu_n$ and $\hat\mu_n$,
respectively:
\begin{align*}
I_n(\overline{H_*})=\Big\{\,B_r(\overline{G}_n,o_n)\ \Big|\ &
B_r(\overline{G}_n,o_n)=\overline{H_*},\
\max_{v_{i,n}\in V(\overline{G}_n)}
|M(v_{i,*})-M(v_{i,n})|_1<\epsilon \text{ and }
\mu_{r,n}^0((G_n,o_n))>0
\Big\}
\end{align*}
and
\begin{align*}
\hat I(\overline{H_*})=\Big\{\,B_r(\hat G_n,\hat o_n)\ \Big|\ &
B_r(\hat G_n,\hat o_n)=\overline{H_*},\
\max_{\hat v_{i,n}\in V(\hat G_n)}
|M(v_{i,*})-M(\hat v_{i,n})|_1<\epsilon \text{ and } \hat\mu_{r,n}^0((\hat G_n,\hat o_n))>0
\Big\}.
\end{align*}
To finish the proof of Lemma~\ref{coupling}, it suffices to show that
\begin{equation}\label{closed-forms}
\lim_{n\to\infty}\mu_{r,n}^0((\overline{G}_n,o_n)=F_n)
=
\lim_{n\to\infty}\hat\mu_{r,n}^0((\hat G_n,\hat o_n)=\hat F)
\end{equation}
for any $F_n\in I_n(\overline{H_*})$ and $\phi(F_n)=\hat F\in\hat I(\overline{H_*})$ due to the following remark.
\begin{remark}\label{dif-lattice-con}
For any $\epsilon>0$ and all $n$ sufficiently large, there is an injection $\Phi: I_n(\overline{H_*}) \hookrightarrow \hat I(\overline{H_*})$, where the only difference between $F_n$ and $\Phi(F_n)=\hat F$ is that the marks of the ends of lattice edges differ by $1/n$ in $F_n$ and coincide in $\hat F$. The complement of the image is contained in the set of configurations whose root or some shortcut endpoint has mark $(x,y)$ with $\min\{x,y\}<\frac{rk}{n}$ or $\max\{x,y\}\ge\frac{n-rk}{n}$, which has $\hat\mu_{r,n}^0$-mass $O\left(\frac{rk}{n}\right)=o(1)$ by a standard boundary estimate, and hence does not affect \eqref{closed-forms}.
\end{remark}
To do so, we write a closed form for
$\hat\mu_{r,n}^0((\hat G_n,\hat o_n)=\hat F)$ and then for
$\mu_{r,n}^0((\overline{G}_n,o_n)=F_n)$, and finally show that they converge to
one another.
We first introduce some notation. Given $\overline{H_*}$, define $p(i)$ to be the order of the parent of the $i$-th node in $H_*$ (the node from which it is explored in the BFS process), and let $x_i$ be the number of incoming shortcuts of the $i$-th node in $\overline{H_*}$. Also define
\[
\mathbbm{1}(i,\mathrm{out})
=
\mathbbm{1}(v_{i,*}\text{ is explored through an outgoing shortcut})
\]
and
\[
\mathbbm{1}(i,\mathrm{in})
=
\mathbbm{1}(v_{i,*}\text{ is explored through an incoming shortcut}).
\]

The key point is that in the coupled object $\hat G_n$, only the roots of the different $(q,k)$-patches carry independent mark choices. Nodes reached from a patch root using only lattice edges do not contribute new independent $1/n^2$-type factors: their marks are determined by the mark of the patch root together with the deterministic lattice offset.

Accordingly, let $R(\hat F)$ be the set of patch roots of $\hat F$, namely the original root together with all nodes of $\hat F$ that are first reached
through a shortcut. For each $\rho\in \mathbf R(\hat F)$, let
\[
\mathcal M_{\hat F}(\rho)
=
\big(\mathcal M_{\hat F}(\rho)_1,\mathcal M_{\hat F}(\rho)_2\big)\in[0,1]^2
\]
be its mark, and define
\[
Q_\rho
:=
\big[\mathcal M_{\hat F}(\rho)_1,\ \mathcal M_{\hat F}(\rho)_1+\tfrac1n\big]
\times
\big[\mathcal M_{\hat F}(\rho)_2,\ \mathcal M_{\hat F}(\rho)_2+\tfrac1n\big].
\]
For each node $i$, let $\rho(i)\in \mathbf R(\hat F)$ denote the root of the patch containing node $i$. Equivalently, $\rho(0)=0$; if node $i$ is reached through a shortcut, then $\rho(i)=i$; and if node $i$ is reached through a lattice edge, then $\rho(i)=\rho(p(i))$.

With this notation, the right-hand side of \eqref{closed-forms} equals
\begin{equation}\label{eq:nested-corrected}
\int_{\prod_{\rho\in \mathbf R(\hat F)}Q_\rho}
f\big((m_\rho)_{\rho\in \mathbf R(\hat F)}\big)\,
\prod_{\rho\in \mathbf R(\hat F)}dm_\rho,
\end{equation}
where
\[
f\big((m_\rho)_{\rho\in \mathbf R(\hat F)}\big)
=
\prod_{i=0}^{|H_*|-1}
\frac{e^{-\Lambda_{m_{\rho(i)}}}\Lambda_{m_{\rho(i)}}^{x_i}}{x_i!}
\prod_{i:\,\mathbbm{1}(i,\mathrm{out})=1}
p_{\mathrm{out},\,m_{\rho(p(i))}}\!\big(m_{\rho(i)}\big)
\prod_{i:\,\mathbbm{1}(i,\mathrm{in})=1}
p_{\mathrm{in},\,m_{\rho(p(i))}}\!\big(m_{\rho(i)}\big).
\]

Each $Q_\rho$ has area $n^{-2}$ and diameter at most $2/n$. Since all factors
above are continuous in their $[0,1]^2$ arguments, $f$ is uniformly continuous
on the compact set $\prod_{\rho\in \mathbf R(\hat F)}Q_\rho$. Let
\[
\omega_n
:=
\sup\Big\{
|f(\mathbf z)-f(\mathbf z')|:\ 
\|\mathbf z-\mathbf z'\|_\infty\le 2/n
\Big\}.
\]
Then $\omega_n\to0$ as $n\to\infty$, and iterating the one-cell estimate over
the finitely many patch roots yields
\begin{align*}
&\int_{\prod_{\rho\in \mathbf R(\hat F)}Q_\rho}
f\big((m_\rho)_{\rho\in \mathbf R(\hat F)}\big)\,
\prod_{\rho\in \mathbf R(\hat F)}dm_\rho =
\frac{1}{n^{2|\mathbf R(\hat F)|}}
\Big(
f\big((\mathcal M_{\hat F}(\rho))_{\rho\in \mathbf R(\hat F)}\big)
+O(\omega_n)
\Big).
\end{align*}
Therefore
\begin{equation}\label{closed-form-right}
\hat\mu_{r,n}^0((\hat G_n,\hat o_n)=\hat F)
=
\frac{1}{n^{2|\mathbf R(\hat F)|}}
\Big(
f\big((\mathcal M_{\hat F}(\rho))_{\rho\in \mathbf R(\hat F)}\big)
+o(1)
\Big).
\end{equation}

The closed form of $\mu_{r,n}^0((\overline{G}_n,o_n)=F_n)$ is more complicated,
since we no longer have independence and need to work with conditional
probabilities. Define $T_i$ to be the event that $\overline{G}_n$ and $F_n$
agree up to the $(i+1)$-th step of the BFS process as before, and let
$\mathcal M_{F_n}(i)$ be the mark of the $i$-th node in $F_n$. Then the
left-hand side of \eqref{closed-forms} is equal to the limit of
\begin{equation}\label{closed-form-left}
\prod_{i=0}^{|H_*|-1}
\mathbb P\big[\text{$v_{i,n}$ has $x_i$ incoming shortcuts}\mid
M(v_{i,n})=\mathcal M_{F_n}(i),\,T_{i-1}\big]\,
\mathbb P\big[M(v_{i,n})=\mathcal M_{F_n}(i)\mid T_{i-1}\big].
\end{equation}

Notice that each patch root contributes one $n^{-2}$-type factor: the original
root because $o_n$ is chosen uniformly at random, and each shortcut-created
patch root because its mark is specified by one $1/n\times1/n$ cell. By
contrast, nodes reached only through lattice edges contribute no new
$n^{-2}$-factor, since their marks are determined by the mark of their patch
root up to the deterministic lattice offset.

What remains to show for Lemma~\ref{coupling} is that the limit of
\eqref{closed-form-left} is the same as \eqref{closed-form-right}. Since
$\overline{H_*}$ is finite, it is enough to prove the following two key lemmas whose proof appears in Appendix~\ref{proof-kl-1moment}. At a high level, the proof of the first lemma follows from the bijection established in Remark~\ref{dif-lattice-con} and from the fact that the \(1/n\) offset between the shortcut marks in \(\overline{G}_n\) and \(F_n\) tends to \(0\). Consequently, the discrepancy between the corresponding distributions of shortcut marks also vanishes.

\begin{lemma}\label{mark-conv}
Let $M(v_{i,n})$ and $\mathcal M_{F_n}(i)$ be the marks of the $i$-th node in
$\overline{G}_n$ and $F_n$, respectively, and let $T_i$ be the event that
$\overline{G}_n$ and $F_n$ agree up to the $(i+1)$-th step of the BFS process.
Define
\[
Q_i := \big[\mathcal{M}_{F_n}(i)_1,\ \mathcal{M}_{F_n}(i)_1+\tfrac{1}{n}\big]
       \times
       \big[\mathcal{M}_{F_n}(i)_2,\ \mathcal{M}_{F_n}(i)_2+\tfrac{1}{n}\big].
\]

Then,
    \[
    \mathbb P\big[M(v_{i,n})=\mathcal M_{F_n}(i)\mid T_{i-1}\big]=1
    \]
    if $v_{i,n}$ is explored through a lattice connection;
    \[
    \mathbb P\big[M(v_{i,n})=\mathcal M_{F_n}(i)\mid T_{i-1}\big]
    =
    \int_{Q_i} p_{\mathrm{out},\,\mathcal M_{F_n}(p(i))}(z)\,dz
    + o(n^{-2})
    \]
    if $v_{i,n}$ is explored through an outgoing shortcut; and
    \[
    \mathbb P\big[M(v_{i,n})=\mathcal M_{F_n}(i)\mid T_{i-1}\big]
    =
    \int_{Q_i} p_{\mathrm{in},\,\mathcal M_{F_n}(p(i))}(z)\,dz
    + o(n^{-2})
    \]
    if $v_{i,n}$ is explored through an incoming shortcut.
\end{lemma}

Intuitively, the next lemma follows from the same Poisson approximation as Lemma~\ref{lm-degdist-WS}, except that here the parameter depends on the node’s location in the grid.
\begin{lemma}\label{incom-pois-conv}
Take $M(v_{i,n})$, $\mathcal M_{F_n}(i)$, and $T_i$ as in
Lemma~\ref{mark-conv}. Then
\begin{equation}\label{lemma-2}
\lim_{n\to\infty}
\mathbb P\big[\text{$v_{i,n}$ has $x_i$ incoming shortcuts}\mid
M(v_{i,n})=\mathcal M_{F_n}(i)\text{ and }T_{i-1}\big]
=
\lim_{n\to\infty}
\frac{e^{-\Lambda_{\mathcal M_{F_n}(i)}}\Lambda_{\mathcal M_{F_n}(i)}^{x_i}}{x_i!}.
\end{equation}
\end{lemma}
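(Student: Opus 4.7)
The plan is to apply a Poisson approximation (via Le Cam's inequality) to the sum of Bernoulli-type random variables counting incoming shortcuts at $v_{i,n}$, after showing that the conditioning on $T_{i-1}$ perturbs this sum only negligibly.

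First, I would decompose $X := \#\{\text{incoming shortcuts at } v_{i,n}\} = \sum_{w \neq v_{i,n}} Y_w$, where $Y_w$ counts the shortcuts originating at $w$ that land on $v_{i,n}$. Because each node independently samples its $q$ outgoing shortcuts, the $Y_w$ are unconditionally mutually independent with $Y_w \sim \mathrm{Bin}(q, p^{(n)}_w)$ for $p^{(n)}_w = d(w, v_{i,n})^{-\ell}/\sum_{v\neq w} d(w,v)^{-\ell}$, and $\sum_w q \cdot p^{(n)}_w = \Lambda^{(n)}_{v_{i,n}}$. Next, I would argue that $T_{i-1}$ is determined by the outgoing shortcut choices of the already-explored set $S$ with $|S| \leq |H_*|-1 = O(1)$, together with the list of shortcuts from $V(G_n)\setminus S$ that land in $S$. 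Only the $|S|$ variables $Y_w$ with $w \in S$ are completely fixed, and by Lemma~\ref{first lemma} they equal $0$ with probability tending to $1$ (the relevant lattice distances are $\Omega(n^c)$). For $w \notin S$, conditioning on how many of $w$'s $q$ shortcuts landed in $S$ leaves the remaining shortcuts distributed as $\mathrm{Bin}(q - q_S^{(w)}, p^{(n)}_w/(1-\mathbb{P}[w \to S]))$, and since $\mathbb{P}[w \to S] = O(|S| \max_v p^{(n)}_{w\to v}) = o(1)$ uniformly in $w$, the perturbation to each factor is $1+o(1)$.

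I would then invoke Le Cam's Poisson approximation: letting $X''$ denote the sum of the residual independent Bernoulli-like contributions and $\mu_n := \mathbbm{E}[X'' \mid T_{i-1}]$,
\begin{equation*}
d_{\mathrm{TV}}\bigl(X'', \mathrm{Poi}(\mu_n)\bigr) \leq C \sum_{w \notin S} (p^{(n)}_w)^2 \leq C \cdot \max_w p^{(n)}_w \cdot \sum_w p^{(n)}_w.
\end{equation*}
Using the estimates developed in the proof of Lemma~\ref{prelim-2}, $\max_w p^{(n)}_w = O(n^{\ell-2})$ when $\ell < 2$ and $O(1/\log n)$ when $\ell = 2$, while $\sum_w p^{(n)}_w \leq 4$, so this bound tends to $0$. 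The mean $\mu_n$ differs from $\Lambda^{(n)}_{v_{i,n}}$ only by the $O(\max_w p^{(n)}_w) = o(1)$ contribution of $w \in S$; recognizing $\Lambda^{(n)}_{v_{i,n}}$ as a Riemann-sum approximation of $\Lambda_{M(v_{i,n})}$ and using $\|M(v_{i,n}) - M_{F_n}(i)\|_1 = O(1/n)$ together with continuity of $m \mapsto \Lambda_m$ on $M$ gives $\mu_n \to \Lambda_{M_{F_n}(i)}$. Pointwise convergence of the Poisson PMF at $x_i$ then yields the claim.

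The main obstacle I anticipate is the uniform control of the Riemann-sum convergence $\Lambda^{(n)}_u \to \Lambda_{M(u)}$: the integrand $\|m - x\|_1^{-\ell}$ is singular at $x = m$, so the argument requires splitting the sum into a near-$u$ contribution (bounded using Lemma~\ref{first lemma}, which shows that the mass concentrated near $u$ is $o(1)$) and a bulk contribution handled by standard Riemann approximation. This is especially delicate in the critical case $\ell = 2$, where the normalizing sum is only logarithmic in $n$, and near the boundary of the lattice where the position-dependent constants in the estimates of Lemma~\ref{prelim-2} are less regular. A secondary subtlety is tracking the $1 + o(1)$ perturbations from the conditioning uniformly over $w$, to guarantee they do not aggregate into a macroscopic shift of the Poisson parameter.
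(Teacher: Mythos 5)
Your proposal is correct but takes a genuinely different route from the paper. The paper's proof works directly with the exact closed-form conditional probability, writing the event as a sum over ordered tuples $(s_1,\dots,s_{x_i})\in\mathbf{S}_i$ of feasible shortcut sources, and then proves three separate limits: that the sum of products of $p_{s_j\to v_{i,n}}$ converges to $(\Lambda^{(n)}_{v_{i,n}})^{x_i}$, that the ``no other shortcut lands here'' factor converges to $e^{-\Lambda^{(n)}_{v_{i,n}}}$ via $\log(1+x)$ bounds, and that the residual correction term tends to $1$. You instead decompose the count into independent binomial contributions $Y_w$, control the effect of conditioning on $T_{i-1}$ as a $1+o(1)$ perturbation, and invoke Le Cam's inequality to obtain a total-variation bound $d_{\mathrm{TV}}(X'',\mathrm{Poi}(\mu_n))\leq C\sum_w (p_w^{(n)})^2\to 0$. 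Your approach has the advantage of being a cleaner black-box application of a standard Poisson approximation tool and yields TV convergence rather than only pointwise PMF convergence; the paper's approach is more self-contained and explicit, at the cost of a longer by-hand upper/lower bound argument.

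Two small remarks. First, your stated reason that the $w\in S$ contributions vanish ``because the relevant lattice distances are $\Omega(n^c)$'' is not quite the right mechanism: a node $w\in S$ can be a close lattice neighbor of $v_{i,n}$, so the distance need not be large. What actually kills these contributions is that $\max_w p^{(n)}_w = O(n^{\ell-2})$ (or $O(1/\log n)$ when $\ell=2$) uniformly in $w$, regardless of distance---a fact you already state elsewhere in the argument, so the fix is cosmetic, but the appeal to Lemma~\ref{first lemma} is the wrong citation here. Second, you are right to flag the Riemann-sum convergence $\Lambda^{(n)}_{v_{i,n}}\to\Lambda_{M_{F_n}(i)}$ as the delicate step owing to the singularity of $\|m-x\|_1^{-\ell}$; the paper's proof in fact slides past this in its final displayed equality without explicit justification, so your identification of this as the genuine technical crux is a valid observation rather than an artifact of your different approach.
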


Now, using Lemmas~\ref{mark-conv} and \ref{incom-pois-conv}, we finish the proof
of Lemma~\ref{coupling}. By Remark~\ref{dif-lattice-con}, if
$\mathcal M_{F_n}(i)\neq \mathcal M_F(i)$, then $v_{i,*}$ is explored through a
lattice connection. Since we explore only nodes at graph distance at most $rk$
from the root, we have
\[
|\mathcal M_{F_n}(i)-\mathcal M_F(i)|_1\le \frac{rk}{n}\to 0.
\]
The densities $p_{\mathrm{out},\,\mathcal M_F(p(i))}$ and
$p_{\mathrm{in},\,\mathcal M_F(p(i))}$ are continuous on a neighborhood of
$\mathcal M_F(i)$. Because $\mathcal M_{F_n}(i)\to \mathcal M_F(i)$ and
$\mathcal M_{F_n}(p(i))\to \mathcal M_F(p(i))$, for all large $n$ we have
\[
\int_{Q_i} p_{\mathrm{out},\,\mathcal M_{F_n}(p(i))}(z)\,dz
=
n^{-2}\bigl(p_{\mathrm{out},\,\mathcal M_F(p(i))}(\mathcal M_F(i))+o(1)\bigr),
\]
\[
\int_{Q_i} p_{\mathrm{in},\,\mathcal M_{F_n}(p(i))}(z)\,dz
=
n^{-2}\bigl(p_{\mathrm{in},\,\mathcal M_F(p(i))}(\mathcal M_F(i))+o(1)\bigr).
\]
Also, by continuity of $\Lambda_m$ in $m$,
\[
\frac{e^{-\Lambda_{\mathcal M_{F_n}(i)}}\Lambda_{\mathcal M_{F_n}(i)}^{x_i}}{x_i!}
\to
\frac{e^{-\Lambda_{\mathcal M_F(i)}}\Lambda_{\mathcal M_F(i)}^{x_i}}{x_i!}.
\]
Thus the limit of \eqref{closed-form-left} agrees with
\eqref{closed-form-right}, concluding the proof. \qed

\section{Local Limit of the Kleinberg Model in Super-critical Regime: Proof of Theorem~\ref{main-thm-kl-2}} \label{proof-main-thm-kl-2}

Let $(G, o) \sim \mu$ be the random graph sampled according to $K_>(q, k, \ell)$  with measure $\mu$ defined in Section~\ref{local-limit-kl-2}, and let $(G_n, o_n) = (K(n, q, k, \ell) \sim \mu_n$, $V(G_n)$, $\mu_{r,n}$ and $\mu_r$ be defined as in the proof of Theorem~\ref{main-thm-kl-1}. (Notice that we omitted $\epsilon$ here since the limit graph $G$ is unmarked, so we take $G_n$ as an unmarked graph, and mark equivalences are not considered.) Again, as in the proof of Lemma~\ref{main-lemma-kl-1}, it suffices to prove 
\begin{equation} \label{main-lemma-kl-2} 
\frac{1}{n^2}\sum_{o_n\in V(G_n)}  \mathbbm{1}_r\big((G_n,o_n)\simeq H_* \big)\overset{\mathbb P}{\to}\mu\Big(  \mathbbm{1}_r\big((G,o)\simeq H_*)\big)\Big) \quad \text{ as $n \to \infty.$}
\end{equation}
for all $r > 0$ and $H_* = B_r(H_*, o_*)$ such that $\mu_r(H_*, o_*)>0$.

We will start with some definitions and preliminaries. 
Recall that $d(u, v)$ is the lattice distance, i.e. $d(u, v) = |x_u - x_v| + |y_u - y_v|$, and $d_G(u, v)$ is the graph distance between the nodes in graph $G$.
Armed with these two definitions, we can work with two types of local neighborhoods: graph neighborhoods given by $B_r(G, o)$ for a graph $G$ with root $o$ and lattice neighborhoods defined next.

\begin{definition} [Lattice Neighborhood] Let $L_r(G, o)$ denote the subgraph of $(G, o)$ induced by all the nodes $v \in V(G)$ whose lattice distance to $o$ is less than or equal to $r$: $$L_r(G, o) = G[\{v \in V(G) | d(v, o) \leq r\}].$$
\end{definition}

The following lemma shows that all shortcuts of a node span a short range, explaining why, in the limit, traversing through shortcuts does not change the marks.

\begin{lemma} \label{incoming shortcut} Let $\psi:\mathbb{N}\to\mathbb R$ be any such function that $\lim_{n \to \infty} \psi(n) \to \infty$ and define  $G_n = K(n, q, k, \ell)$, where $\ell>2$.Then, for a given node $u \in V(G_n)$,
$$\mathbb{P}[\text{There exists a shortcut between $u$ and a node $v$ s.t. } d(u, v) > \psi(n)] \to 0\qquad \text{ as }n \to \infty.$$
\end{lemma}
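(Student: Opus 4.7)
The plan is to split the event by the direction of the shortcut (outgoing from $u$ versus incoming to $u$), bound each probability by a tail sum of the series $\sum_d d^{1-\ell}$, and conclude by noting that this series converges when $\ell>2$, so its tail vanishes as $\psi(n)\to\infty$. Throughout I will use the fact that the number of lattice nodes at lattice distance exactly $d$ from $u$ is at most $4d$.

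For the outgoing case, since $u$ independently generates $q$ shortcuts with endpoint distribution proportional to $d(u,\cdot)^{-\ell}$, a union bound gives
\begin{equation*}
    \mathbb P[\exists\text{ outgoing shortcut to }v\text{ with }d(u,v)>\psi(n)]\;\leq\;q\cdot\frac{\sum_{v:\,d(u,v)>\psi(n)} d(u,v)^{-\ell}}{\sum_{w\neq u} d(u,w)^{-\ell}}\;\leq\;\frac{4q\sum_{d>\psi(n)} d^{1-\ell}}{1},
\end{equation*}
where I used that the denominator is at least $1$ (from the nearest neighbour at lattice distance $1$). Since $\sum_{d\geq 1} d^{1-\ell}=\zeta(\ell-1)<\infty$ for $\ell>2$, the tail sum vanishes as $\psi(n)\to\infty$.

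For the incoming case, I would union-bound over all potential sources $w$ with $d(u,w)>\psi(n)$. Each such $w$ picks each of its $q$ shortcut targets with probability $d(u,w)^{-\ell}\big/\sum_{v\neq w}d(w,v)^{-\ell}$, and in the proof of Lemma~\ref{prelim-2} it was shown that the denominator is at least $\sum_{j=1}^{n-1} j^{1-\ell}\geq 1$ uniformly in $w$ (this bound is where the corner-versus-center asymmetry is absorbed, and where $\ell>2$ is essential so that the lower bound is a positive constant). Hence
\begin{equation*}
    \mathbb P[\exists w\text{ with }d(u,w)>\psi(n)\text{ that shortcuts to }u]\;\leq\;\sum_{w:\,d(u,w)>\psi(n)} \frac{q\, d(u,w)^{-\ell}}{\sum_{v\neq w} d(w,v)^{-\ell}}\;\leq\;4q\sum_{d>\psi(n)} d^{1-\ell},
\end{equation*}
which again tends to $0$ as $\psi(n)\to\infty$. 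Combining the two bounds with one final union bound concludes the proof.

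The only subtle step is the incoming bound, where the normalising constant of the shortcut distribution varies across source nodes $w$; the key observation that makes everything work is that for $\ell>2$ this normalising constant is bounded below by a strictly positive constant independent of $n$ and $w$, which is exactly the content of inequality \eqref{corner} from the proof of Lemma~\ref{prelim-2}. After that, the remaining estimate reduces to controlling $\sum_{d>\psi(n)} d^{1-\ell}$, which is a standard tail of a convergent $p$-series.
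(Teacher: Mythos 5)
Your proof is correct and takes essentially the same route as the paper: split the event by shortcut direction; union-bound over the at most $4d$ nodes at lattice distance $d$; bound the source's normalizing constant $\sum_{v\neq w} d(w,v)^{-\ell}$ away from zero; and read off the conclusion from the vanishing tail of $\sum_{d} d^{1-\ell}$. The only cosmetic difference is in the outgoing case, where the paper cites Kleinberg's inequality $\mathbbm{P}[d(u,v)>d]\leq 4(\ell-2)^{-1}d^{2-\ell}$ directly while you derive an equivalent tail estimate from scratch; these are the same bound after an integral comparison, so there is no substantive gap.

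One small remark on your closing commentary: you say $\ell>2$ is what makes the normalizing constant bounded below by a positive constant. That is not quite where the hypothesis is used — the bound $\sum_{v\neq w}d(w,v)^{-\ell}\geq 1$ holds for every $\ell$ simply because $w$ has a neighbour at lattice distance $1$. The condition $\ell>2$ enters exactly once, in making $\sum_{d\geq 1}d^{1-\ell}$ a convergent series so that its tail beyond $\psi(n)$ vanishes; for $\ell\leq 2$ that sum diverges and the argument correctly breaks. Your inequalities are all fine, just be careful not to misattribute where the exponent assumption actually bites.
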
 

\begin{proof} 
First, we bound the probability that there exists an \textit{incoming} shortcut of distance at most $\psi(n)$. Let $S_u$ be the event that there exists an incoming shortcut to $u$ from a node $v$ such that $d(u, v) > \psi(n)$. There are at most $4j$ nodes at lattice distance $j$ from $u$ and each has $q$ outgoing shortcuts. Since $\ell>2$, there exists $c_\ell>0$ such that $D_n(v)\ge c_\ell $uniformly in $v$ and $n$ large enough. Therefore
\[
\mathbb P[S_u]
\le \frac{4q}{c_\ell}\sum_{j=\psi(n)+1}^{2n-2} j^{1-\ell},
\] using union bound over $4qj$ potential edges,
\begin{align*}
\mathbb P[S_u]
 \le
\frac{4q}{c_\ell}\int_{\psi(n)}^{2n-1} x^{1-\ell}\,dx =
\frac{4q}{c_\ell(\ell-2)}
\bigl(\psi(n)^{2-\ell}-(2n-1)^{2-\ell}\bigr) \to 0
\qquad \text{as } n\to\infty.
\end{align*}

Now turning our attention to outgoing shortcuts, we have the following inequality from \cite[Proof of Theorem 3b]{kleinberg}: Let $u$ and $v$ be the two ends of a shortcut. Then,
\begin{equation} \label{klein}
    \mathbb{P}[d(u, v) > d] \leq (\ell -2)^{-1} d^{2-\ell}.
\end{equation}
This gives that 
\begin{equation*} \label{klein2}\mathbb{P}[\text{an outgoing shortcut of $u$ is to a node $v$ s.t. } d(u, v) > \psi(n)] \leq q (\ell -2)^{-1} \psi(n) ^{2-\ell} \to 0\end{equation*} for any function $\psi(n) \to \infty$, proving the desired result.
\end{proof}
Lemma~\ref{incoming shortcut} shows that the shortcuts of a given node are contained within the $o(n)$ lattice neighborhood of it. Therefore, the marks of all nodes in  $B_r(G_n, o_n)$––the coordinates normalized by $n$––will have the same mark as the mark of node $o_n$ for any finite $r$ in the limit. 

We will later show a stronger result:  for any $r$, there exists a constant $R$ such that, with high probability, the $R$-lattice neighborhood contains the $r$-graph neighborhood, i.e., $B_r(G_n, v) \subseteq L_R(G_n, v)$. This result is particularly useful because the $R$-lattice neighborhoods of two nodes in $G_n$ are isomorphic if their distance to the boundary of the grid is larger than $R$.  Consequently, for two uniformly chosen nodes $o_1$ and $o_2$ in $G_n$, their $R$-lattice neighborhoods are isomorphic with high probability, as the probability of sampling a node within the $R$ distance of the boundary tends to zero. As a result, the neighborhood structures of $B_r(G_n, o_1)$ and $B_r(G_n, o_2)$ have the same distribution (ignoring the marks).
This conveniently eliminates the need for marks in the limiting graph. With this, we are ready to prove \eqref{main-lemma-kl-2} formally by proving the following two steps:
\begin{itemize}
     \item The lattice neighborhood of $K(n, q, k, \ell)$ converges in probability to the lattice neighborhood of $K(q, k, \ell)$. We will use a Second-Moment argument here similar to the prior proofs.
     \item The lattice neighborhood convergence implies graph neighborhood convergence. For this purpose, we will use a careful path counting and coupling argument here inspired by \cite{van2021local}.
\end{itemize}
To do so, we introduce the following notation:
\begin{align*}
    \mathbbm{1}_r^L\big((G_n,o_n)\simeq (H_*,o_*)\big)=  \mathbbm{1}\Big( L_r(G_n,o_n)\simeq (H_*,o_*)\Big).
\end{align*}
Similar to the definition of expectation over $\mu$ and $\mu_n$, we will define $\mu_r^L((G,o) \simeq  (H_*,o_*))$ to be the probability of the event $L_r(G,o)\simeq (H_*,o_*)$ under measure $\mu$ and $\mu_{n,r}^L((G_n,o_n) \simeq  (H_*,o_*))$ to be the probability of the event $L_r(G_n,o_n)\simeq (H_*,o_*)$ under measure $\mu_n$.

\subsection{Local Lattice Convergence}

\begin{lemma} [Lattice Neighborhood Convergence of the Kleinberg Model] \label{local-lattice-cv} Let \(r > 0\) be a constant. 
Then for any rooted graph $(H_*,o_*)$,
 \begin{equation*} 
    \frac{1}{n^2}\sum_{o_n\in V(G_n)}  \mathbbm{1}_r^L((G_n,o_n) \simeq (H_*,o_*) )\overset{\mathbb P}{\to}\mu_r^L((G,o) \simeq (H_*,o_*)) \quad \text{ as $n \to \infty.$}
\end{equation*} 
\end{lemma}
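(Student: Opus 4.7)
The plan is to apply the second moment method, in direct analogy with the proofs of Lemma \ref{main-lemma-ws} and Lemma \ref{main-lemma-kl-1}. Set
$$X_n := \frac{1}{n^2}\sum_{o_n\in V(G_n)} \mathbbm{1}_r^L\bigl((G_n,o_n)\simeq (H_*,o_*)\bigr).$$
It suffices to show $\mathbbm{E}[X_n] \to \mu_r^L((G,o)\simeq(H_*,o_*))$ and $\mathrm{Var}(X_n)\to 0$; Chebyshev's inequality then yields the claimed convergence in probability.

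For the first moment, by symmetry the task reduces to proving $\mu_{n,r}^L((G_n,o_n)\simeq(H_*,o_*)) \to \mu_r^L((G,o)\simeq(H_*,o_*))$ for a single uniformly random root. With probability $1-O(r/n)\to 1$, the root $o_n$ lies at lattice distance greater than $r$ from the boundary, and conditionally on this event the vertex set of $L_r(G_n,o_n)$ is lattice-isomorphic to that of $L_r(G,o)$. The remaining randomness is carried by shortcut edges with \emph{both} endpoints inside $L_r$. For each source $u\in L_r$, the $q$ outgoing shortcuts are i.i.d.\ draws from the target distribution $p_u^{(n)}(v):=d(u,v)^{-\ell}/\sum_{w\neq u}d(u,w)^{-\ell}$, whereas in $G$ they are i.i.d.\ draws from $p_u^\infty(v):=d(u,v)^{-\ell}/(4\zeta(\ell-1))$. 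The integral bounds used in the proof of Lemma \ref{prelim-2}, together with $\ell>2$, show that $\sum_{w\neq u}d(u,w)^{-\ell}\to 4\zeta(\ell-1)$ uniformly over $u$ at lattice distance $\omega(1)$ from the boundary, so $p_u^{(n)}(v)\to p_u^\infty(v)$ for each fixed target. Because $|L_r|$ is bounded in $n$, this pointwise convergence of source-marginal laws lifts to convergence of the joint law of the shortcut configuration inside $L_r$; summing over the finitely many configurations whose induced subgraph is isomorphic to $(H_*,o_*)$ then delivers the first-moment convergence.

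For the variance, expanding the square and using symmetry gives
$$\mathrm{Var}(X_n) = \frac{p_n}{n^2} + \Bigl(1-\frac{1}{n^2}\Bigr) q_n - p_n^2,$$
where $p_n := \mu_{n,r}^L((G_n,o_n)\simeq H_*)$ and $q_n := \mathbbm{P}[L_r(G_n,o_1)\simeq H_*,\, L_r(G_n,o_2)\simeq H_*]$ for two distinct uniform roots. The first term is $O(1/n^2)$. I would then condition on the event $\mathcal{E}_n$ that $o_1,o_2$ are each at lattice distance $>r$ from the boundary and at mutual lattice distance $>2r$, which holds with probability $1-O(r^2/n^2)$. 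On $\mathcal{E}_n$ the vertex sets of $L_r(G_n,o_1)$ and $L_r(G_n,o_2)$ are disjoint, so their shortcut configurations are determined by the i.i.d.\ outgoing shortcut choices of two disjoint sets of nodes and are therefore independent. Rerunning the first-moment argument on each neighborhood separately yields $q_n\to p_\infty^2$ where $p_\infty := \mu_r^L((G,o)\simeq H_*)$, from which $\mathrm{Var}(X_n)\to 0$ follows immediately.

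The main technical obstacle is establishing the uniformity of the limit $\sum_{w\neq u}d(u,w)^{-\ell}\to 4\zeta(\ell-1)$ over all $u\in L_r(G_n,o_n)$: one must ensure that boundary corrections to the normalization vanish fast enough that the endpoint probabilities $p_u^{(n)}(v)$ converge uniformly across the bounded set $L_r$. This is handled by the integral bounds already used in the proof of Lemma \ref{prelim-2}, combined with the $O(r/n)$ exclusion of roots near the boundary. A convenient structural simplification, compared to the $B_r$-based analysis in Section \ref{proof-main-thm-kl-2} that follows, is that because $L_r$ is defined as the induced subgraph on a lattice-determined vertex set, paths that leave and re-enter $L_r$ through shortcuts cannot affect its isomorphism type, so no path-counting argument of the kind in Lemma \ref{path counting} is needed at this stage.
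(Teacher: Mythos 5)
Your proposal is correct and follows essentially the same route as the paper: a second-moment argument, with the first moment obtained by restricting to roots far from the boundary (so $L_r$ becomes a fixed vertex set) and driving the finite-$n$ normalization $\sum_{w\neq u}d(u,w)^{-\ell}$ to $4\zeta(\ell-1)$ uniformly over the bounded box, and the variance handled via disjointness of the two lattice balls. This is precisely what the paper does in Lemmas~\ref{lattice-cv-1} and~\ref{lattice-cv-2}, including your correct observation that no path-counting is required at this stage because $L_r$ is lattice-determined.
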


Again, we will use the Second Moment Method to prove the convergence formally via the next two lemmas, with their proofs provided in Appendix \ref{moment-kl}. 

\begin{lemma}[First Moment of Lattice Convergence] \label{lattice-cv-1} 

Let \(r > 0\) be a constant. For any  rooted graph $(H_*,o_*)$,
\begin{equation*} 
\mu_{r, n}^L((G_n,o_n) \simeq (H_*,o_*)) \to \mu_{r}^L((G,o) \simeq (H_*,o_*)) \quad \text{ as $n \to \infty.$}
\end{equation*}
\end{lemma}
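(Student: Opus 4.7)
The plan is to reduce the convergence of lattice-neighborhood distributions to the convergence of a bounded collection of shortcut-endpoint probabilities. Two features of the regime $\ell > 2$ make this tractable: the lattice skeleton of $L_r(G_n, o_n)$ is deterministic once $o_n$ lies in the bulk, and the normalizing constant of the shortcut distribution has a finite limit because $\ell > 2$.

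First, I fix a function $\phi(n) \to \infty$ with $\phi(n) = o(n)$, and let $A_n$ be the event that $o_n$ lies at Manhattan distance at least $\phi(n) + r$ from every side of the $n \times n$ grid. A uniformly chosen $o_n$ satisfies $A_n$ with probability $1 - O(\phi(n)/n) \to 1$, so it suffices to work on $A_n$. On $A_n$, the lattice-edge skeleton of $L_r(G_n, o_n)$ is isomorphic as a graph to that of $L_r(G, o)$, namely the infinite $k$-lattice restricted to the Manhattan ball of radius $r$, a finite deterministic graph.

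Next, for $u \in L_r(G_n, o_n)$, write $Z_u^{(n)} = \sum_{w \in V(G_n), w \neq u} d(u,w)^{-\ell}$ and $Z^{(\infty)} = \sum_{w \in \mathbb{Z}^2, w \neq u} d(u,w)^{-\ell} = 4\zeta(\ell-1)$. On $A_n$ every $u \in L_r(G_n, o_n)$ lies at distance at least $\phi(n)$ from the boundary, hence
\[
0 \leq Z^{(\infty)} - Z_u^{(n)} \leq \sum_{d > \phi(n)} 4d \cdot d^{-\ell} = O\bigl(\phi(n)^{2-\ell}\bigr) \to 0,
\]
uniformly over such $u$. Therefore $p_{u,v}^{(n)} := d(u,v)^{-\ell}/Z_u^{(n)} \to d(u,v)^{-\ell}/Z^{(\infty)} = \kappa(u,v)$ uniformly over the finitely many pairs $(u,v)$ with $u, v$ in the ball.

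Finally, a shortcut contributes to $L_r(G_n, o_n)$ if and only if both of its endpoints lie in the Manhattan ball, and the $|L_r| \cdot q$ outgoing shortcut slots of in-ball nodes are drawn independently. Thus the event $L_r(G_n, o_n) \simeq (H_*, o_*)$ decomposes into a finite union of atoms specifying, for each slot, either a specific in-ball target or the outcome ``outside the ball''. The probability of each atom is a product of boundedly many factors of the form $p_{u,v}^{(n)}$ or $1 - \sum_{v' \in L_r \setminus \{u\}} p_{u,v'}^{(n)}$, each converging to its limiting analogue with $p^{(n)}$ replaced by $\kappa$; summing over all atoms giving the desired isomorphism yields the claim. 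The main obstacle is the uniform convergence $Z_u^{(n)} \to Z^{(\infty)}$, which crucially uses $\ell > 2$ so that the tail $\sum_{d > \phi(n)} d^{1-\ell}$ is summable and vanishing. The boundary reduction and the combinatorial assembly of configuration probabilities are otherwise routine.
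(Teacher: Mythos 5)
Your proof is correct and takes essentially the same approach as the paper: both reduce the problem to the deterministic lattice skeleton for bulk roots, show the normalization constant $Z_u^{(n)}$ converges to $4\zeta(\ell-1)$ uniformly over the finite ball (using summability of $\sum d^{1-\ell}$ for $\ell>2$), and conclude by convergence of a finite product of shortcut-endpoint probabilities. The only cosmetic difference is that you use a growing boundary margin $\phi(n)\to\infty$ where the paper fixes a radius $R_\delta$ depending on an arbitrary $\epsilon$; the two give the same conclusion.
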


\begin{lemma}[Second Moment of Lattice Convergence] \label{lattice-cv-2} Let \(r > 0\) be a constant.  For any given rooted graph $(H_*,o_*)$,
\begin{equation*} 
\text{Var}\left(\frac{1}{n^2}\sum_{o_n\in V(G_n)}  \mathbbm{1}_r^L((G_n,o_n)\simeq (H_*,o_*)) \right) \to 0 \quad \text{ as $n \to \infty.$}
\end{equation*}
\end{lemma}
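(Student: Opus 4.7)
The plan is to run a standard second moment argument whose only real input is a locality-based independence observation. For $\ell > 2$, the lattice-neighborhood indicator
\[
A_v := \mathbbm{1}_r^L\bigl((G_n, v) \simeq (H_*, o_*)\bigr)
\]
is a measurable function only of the outgoing shortcut endpoint choices made at the finitely many nodes $u \in L_r(G_n, v)$. Indeed, $L_r(G_n, v)$ is an induced subgraph, so any shortcut edge it contains must have both endpoints in $L_r(G_n, v)$; such an edge is generated by an outgoing shortcut originating from some $u \in L_r(G_n, v)$. Since outgoing shortcut endpoints are drawn mutually independently across source nodes in the Kleinberg model, whenever the vertex sets $L_r(G_n, v)$ and $L_r(G_n, w)$ are disjoint—and in particular whenever the lattice distance satisfies $d(v,w) > 2r$—the events $A_v$ and $A_w$ are independent.

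Given this, I will expand the variance as
\[
\text{Var}\left(\frac{1}{n^2}\sum_{v \in V(G_n)} A_v\right) = \frac{1}{n^4}\sum_{v, w \in V(G_n)}\bigl(\mathbbm{P}[A_v \cap A_w] - \mathbbm{P}[A_v]\,\mathbbm{P}[A_w]\bigr)
\]
and split the double sum based on lattice distance. All "far" pairs with $d(v,w) > 2r$ contribute zero to the covariance by the independence claim above. The remaining "close" pairs with $d(v,w) \leq 2r$ (including the diagonal $v = w$) number at most $n^2 \cdot O(r^2)$, since for each $v$ the lattice ball of radius $2r$ contains $O(r^2)$ nodes. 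Each covariance term is bounded by $1$ in absolute value, so the total contribution of close pairs is at most $O(r^2/n^2)$, which tends to $0$ as $n \to \infty$ with $r$ fixed, yielding the desired variance bound.

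The step requiring the most care—and which I expect to be the main thing to write out in detail—is the independence claim itself. One must verify that (i) any shortcut with exactly one endpoint in $L_r(G_n, v)$ is simply not an edge of the induced subgraph and hence cannot enter the isomorphism determining $A_v$, and (ii) the shortcuts contained in $L_r(G_n, v)$ are determined solely by the outgoing choices of nodes within $L_r(G_n, v)$, which are stochastically independent of the analogous choices within $L_r(G_n, w)$. Once that is granted, the rest of the argument is an elementary decomposition; no path-counting machinery is required here (in contrast with the graph-neighborhood lemmas in Section~\ref{proof-main-thm-kl-2}), because lattice balls have deterministic size $O(r^2)$ that is independent of $n$.
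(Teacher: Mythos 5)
Your proposal is correct and follows essentially the same route as the paper: decompose the variance into far pairs (lattice distance $> 2r$), which contribute zero covariance because disjoint lattice balls see mutually independent outgoing-shortcut choices, and close pairs, whose number is $O(r^2)\,n^2$ and thus negligible after dividing by $n^4$. The paper states the disjointness-implies-independence step more tersely; your elaboration of why $A_v$ is measurable with respect to only the $q$ outgoing choices of nodes in $L_r(G_n,v)$ — and hence independent of $A_w$ when the balls are disjoint — is a correct and welcome filling-in of detail, not a different argument.
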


\begin{proof}
    Given Lemma~\ref{lattice-cv-1} and \ref{lattice-cv-2}, the proof of  Lemma~\ref{local-lattice-cv} is identical to the proof of Lemma~\ref{main-lemma-ws}. 
\end{proof}

\subsection{Lattice Neighborhood Convergence implies Graph Neighborhood Convergence}\label{sec: lattic-to-graph}
It is enough to demonstrate that for any constant \(r > 0\), there exists a corresponding constant \(R > 0\) such that the lattice neighborhood of radius \(R\) fully contains the graph neighborhood of radius \(r\). The main idea behind the proof is to show that any path of a given length remains confined within a bounded distance in the lattice. To establish this result, we adapt a technical lemma from \cite{van2021local}, originally developed to analyze local limits of geometric random graphs. 

For this purpose, we need the following notation. Given a path of shortcuts  $\mathcal{U}_n = (o_n = u_0, u_1, u_2, \cdots, u_{j-1})$ in $G_n$, and  a path $\mathcal{U} = (o = u_0, u_1, u_2, \cdots, u_{j-1})$ in $G$, let $P(G_n, \mathcal{U}_n)$ and $P(G, \mathcal{U})$ be the probabilities that path $\mathcal{U}_n$ exists in graph $G_n$ and path $\mathcal{U}$ exists in $G$, respectively. Also define
\begin{align*}
    \mathcal{I}_{n,j} :&= \{  \mathcal{U}_n =  (u_1, \dots, u_j) | (u_0, u_1, \dots, u_j) \text{ is a } j\text{-path in } (G_n, o_n) \text{ starting from } u_0 = o_n, \\
     &\forall 0\leq i \leq j-2, d(u_i, u_{i+1}) \leq a^{b^{i+1}}, d(u_{j-1}, u_{j}) > a^{b^{j}}\}
\end{align*}
 and $\mathcal{I}_{j}$ is the corresponding set in $(G, o)$:
 \begin{align*}
    \mathcal{I}_{j} :&= \{  \mathcal{U} =  (u_1, \dots, u_j) | (u_0, u_1, \dots, u_j) \text{ is a } j\text{-path in } (G, o) \text{ starting from } u_0 = o, \\
     &\forall 0\leq i \leq j-2, d(u_i, u_{i+1}) \leq a^{b^{i+1}}, d(u_{j-1}, u_{j}) > a^{b^{j}}\}.
\end{align*}

\begin{lemma} [Path Counting] \label{path counting}

Given $\ell>2$, let $G_n=K(n,q,k,\ell)$ and $(G,o)\sim K_>(q,k,\ell)$, for any $j \geq 1$, and $a > 1$,
\begin{align}
\lim_{b \to \infty} \limsup_{n \to \infty} \frac{1}{n^2} \sum_{\mathcal{U}_n \in \mathcal{I}_{n,j}} P(G_n, \mathcal{U}_n) \left( \prod_{i=0}^{j-2} \mathbbm{1} \left( d( u_{i+1}, u_i) \leq a^{b^{i+1}} \right) \right)
\mathbbm{1} \left( d(u_{j-1}, u_j, ) > a^{b^j} \right) = 0,
\end{align}
and
\begin{align} \label{second identity}
\lim_{b \to \infty}   \sum_{\mathcal{U} \in \mathcal{I}_{j}} P(G, \mathcal{U}) \left( \prod_{i=0}^{j-2} \mathbbm{1} \left( d(u_{i+1}, u_i)  \leq a^{b^{i+1}} \right) \right)
\mathbbm{1} \left( d(u_{j-1}, u_j) > a^{b^j} \right) = 0.
\end{align}
\end{lemma}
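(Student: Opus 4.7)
The overall strategy is to bound the probability of a specific path existing by a product of single-edge probabilities, then exploit a telescoping sum where intermediate factors contribute a bounded constant and the final long-distance factor vanishes as $b \to \infty$.

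\textbf{Step 1 (single-edge bound).} The plan is first to show that for any two distinct nodes $u,v$ with lattice distance $d = d(u,v) \geq 1$, the probability the edge $(u,v)$ exists in $G_n$ is at most
$$p(d) := \mathbbm{1}(d \leq k) + C_1 d^{-\ell}$$
for a constant $C_1$ depending only on $q$ and $\ell$. This follows from the Kleinberg shortcut construction: because $\sum_{w \neq u} d(u,w)^{-\ell} \geq 4$ (the four distance-$1$ neighbors), the probability a given one of the $q$ outgoing shortcuts of $u$ points to $v$ is at most $d^{-\ell}/4$, and there are two directions to consider. The same bound holds for the limit graph $G$ with kernel $\kappa(u,v) = d^{-\ell}/\zeta(\ell)$. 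Next, I would argue $P(G_n,\mathcal{U}_n) \leq \prod_{i=0}^{j-1} p(d(u_i,u_{i+1}))$: shortcuts emanating from distinct source vertices are fully independent, while the joint distribution of the $q$ outgoing shortcuts from a single source is negatively associated (a conditioned multinomial), so the joint event that a specified set of shortcuts is present is still bounded by the product of marginal probabilities.

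\textbf{Step 2 (key sum estimates).} The elementary inequalities driving the whole argument are
$$\sum_{v \,:\, d(u,v) \leq D} p(d(u,v)) \leq 2k(k+1) + 4 C_1 \sum_{d=1}^{\infty} d^{1-\ell} =: C < \infty,$$
uniformly in $u$ and $D$ (the $d$-sum converges because $\ell>2$, as $4d$ nodes lie at each distance $d$), and the tail bound
$$\sum_{v \,:\, d(u,v) > D} p(d(u,v)) \leq 4 C_1 \sum_{d > D} d^{1-\ell} \leq \frac{4 C_1}{\ell - 2} D^{2-\ell}.$$

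\textbf{Step 3 (telescoping assembly).} Plugging Step 1 into the left-hand side of the first identity, I would sum from the inside out. The innermost sum over $u_j$ with the constraint $d(u_{j-1},u_j) > a^{b^j}$ contributes at most $\frac{4C_1}{\ell-2}\, a^{b^j(2-\ell)}$. Each intermediate sum over $u_i$, $i = 1,\ldots,j-1$, with the constraint $d(u_{i-1},u_i) \leq a^{b^i}$, contributes at most $C$. Finally, the sum over the root $u_0$ over $V(G_n)$ contributes $n^2$, cancelling the prefactor. Hence
$$\frac{1}{n^2} \sum_{u_0,\ldots,u_j \in V(G_n)} P(G_n,\mathcal{U}_n) \prod_{i=0}^{j-2} \mathbbm{1}\!\left(d(u_i,u_{i+1}) \leq a^{b^{i+1}}\right) \mathbbm{1}\!\left(d(u_{j-1},u_j) > a^{b^j}\right) \leq C^{j-1} \cdot \frac{4 C_1}{\ell-2}\, a^{b^j(2-\ell)},$$
which is uniform in $n$ and tends to $0$ as $b \to \infty$ since $2-\ell < 0$ and $b^j \to \infty$. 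For the limit-graph identity \eqref{second identity}, the outer sum collapses (the root $u_0 = o$ is fixed, and the remaining $u_1,\ldots,u_j$ range over $L_r(G,o)$, which only enlarges the sum we are already bounding), and the same estimates apply verbatim to yield the same vanishing bound.

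\textbf{Main obstacle.} The delicate point is the inequality $P(G_n,\mathcal{U}_n) \leq \prod_i p(d(u_i,u_{i+1}))$ when the path reuses a source vertex: a single $u_i$ may contribute two of its $q$ outgoing shortcuts to the path (one along the edge to $u_{i+1}$, one backward along the edge to $u_{i-1}$ if that edge happened to be an outgoing shortcut of $u_i$). These events are not independent, but negative association of the multinomial governing the $q$ endpoints still yields the desired product upper bound. Everything else reduces to the tail behaviour of $\sum d^{1-\ell}$, which is controlled purely by $\ell > 2$.
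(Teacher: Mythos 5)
Your approach is genuinely different from the paper's, and it is the cleaner one in spirit: you bound each edge probability and telescope inward, getting a constant factor $C$ per intermediate level, whereas the paper bounds the intermediate levels purely by counting lattice positions (contributing a growing factor $(a^{b^i})^2$ at level $i$) and applies the Kleinberg tail estimate \eqref{klein} only to the final edge. Both give an exponent dominated by $-b^j(\ell-2)$, but your intermediate factors are uniformly bounded rather than exploding in $b$, which is slightly tidier.

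That said, the obstacle you flag is not quite the right obstacle, and the fix you propose does not close the real gap. In the paper's Kleinberg model the $q$ outgoing shortcuts of a node are sampled \emph{independently} (multi-edges are allowed), so there is no conditioned multinomial and no need to invoke negative association: distinct shortcuts, even from the same source, give independent events and the product bound is free in that case. The genuine issue is a path that traverses the \emph{same undirected edge} twice, e.g.\ $u_{i+2}=u_i$ so that the slots $(u_i,u_{i+1})$ and $(u_{i+1},u_{i+2})$ name one physical edge. Then $P(G_n,\mathcal{U})$ equals the single-edge probability, but your claimed bound is its square, and if $p(d)<1$ (which happens once $d>k$ and $C_1 d^{-\ell}<1$) the inequality $P(G_n,\mathcal{U})\le \prod p(d_i)$ \emph{reverses}. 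The constraints do not exclude this: for $j\ge 3$ with $u_0=u_2$ and $d(u_0,u_1)\le a^b$ the indicators are all satisfied. So the product upper bound, as stated, is false. The fix is not hard — observe the last edge $d(u_{j-1},u_j)>a^{b^j}$ can never coincide with an intermediate edge (whose lengths are at most $a^{b^{j-1}}$), and bound non-simple paths by the corresponding sum over shorter simple subpaths, to which the same telescoping applies — but your proposal does not say this, and the negative-association argument does not supply it. The paper's route of bounding $P(G_n,\mathcal{U})$ only by the probability of the last edge and counting positions for the rest sidesteps the issue entirely.
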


As a corollary of this lemma, we will show for any fixed $K \in \mathbb{N}$ and $a,b > 1$, if we choose
$$r := r(a,b,K) = a^b + a^{b^2} + a^{b^3} + \cdots + a^{b^K}$$
then the $K$-neighborhood of the rooted graph $L_r(G_n, o_n)$, and  $B_K(G_n, o_n)$ will be the same in the limit as first $n$ and then $b$ tends to infinity, and the same result would hold for $(G, o)$. This is formalized in the following lemma, whose proof appears in the Appendix~\ref{moment-kl} together with the proof of Lemma~\ref{path counting}. 

\begin{lemma}  [Coupling Lattice Neighborhood with Graph Neighborhood] \label{lattice-to-graph} Take any $K > 0$ and $a, b > 1$, and let $r := r(a,b,K) = a^b + a^{b^2} + a^{b^3} + \cdots + a^{b^K}$. Then,
\begin{equation}
    \lim_{b \to \infty} \limsup_{n \to \infty} \mu_{n, K}((L_r(G_n, o_n), o_n) \not\simeq B_K(G_n, o_n)) = 0,
\end{equation}
and
\begin{equation}
    \lim_{b \to \infty} \mu_K((L_r(G, o), o) \not\simeq B_K(G, o)) = 0.
\end{equation}    
\end{lemma}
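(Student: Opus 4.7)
The plan is to reduce the non-isomorphism event to the existence of a short graph path from the root that exits the lattice neighborhood $L_r$, and then to apply Lemma~\ref{path counting} via a union bound over path lengths $j \in \{1, 2, \ldots, K\}$ combined with a first-moment (Markov) argument.

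Suppose $B_K(G_n, o_n)$ is not contained in $L_r(G_n, o_n)$; then there exists a graph path $o_n = u_0, u_1, \ldots, u_j$ in $G_n$ with $j \leq K$ and $d(u_0, u_j) > r$. By the triangle inequality, if every jump satisfied $d(u_{i-1}, u_i) \leq a^{b^i}$ for $i = 1, \ldots, j$, we would have $d(u_0, u_j) \leq \sum_{i=1}^j a^{b^i} \leq r$, a contradiction. Hence there is a smallest index $j^* \in \{1, \ldots, K\}$ with $d(u_{j^*-1}, u_{j^*}) > a^{b^{j^*}}$, and truncating the path at $u_{j^*}$ produces a length-$j^*$ path whose consecutive-jump indicators match exactly the ones appearing in Lemma~\ref{path counting} for $j = j^*$. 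Thus the non-isomorphism event is contained in the union over $j^* \in \{1, \ldots, K\}$ of the existence of such a length-$j^*$ ``bad'' path from $o_n$.

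For the finite graph, I would then apply Markov's inequality to obtain
\[
\mu_{n,K}\big((L_r(G_n,o_n), o_n) \not\simeq B_K(G_n,o_n)\big) \;\leq\; \sum_{j=1}^K \mathbbm{E}\big[\#\{\text{length-}j \text{ bad paths from } o_n\}\big],
\]
where the expectation averages over $G_n$ and uniform $o_n$. Each summand equals precisely the quantity in the first identity of Lemma~\ref{path counting} for that $j$, which tends to $0$ as $n \to \infty$ and then $b \to \infty$; since $K$ is fixed, the finite sum inherits the vanishing. For the limit graph $(G, o) \sim \mu$, the same decomposition applies, and I would invoke the second identity of Lemma~\ref{path counting}. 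To justify restricting the sum to paths within $L_r(G, o)$, I would observe that every good-prefix path has its initial $j^* - 1$ vertices inside $L_{r(a,b,j^*-1)}(G, o) \subseteq L_r(G, o)$ by the triangle inequality, so the restriction captures every truncated bad path that can witness the event.

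The main obstacle is not analytic but organizational: most of the technical work has already been packaged into Lemma~\ref{path counting}, and the present lemma is primarily a matter of decomposing the bad event into a finite union indexed by the ``first bad step''. The subtlety I expect to handle carefully is the limit-graph side, verifying that the paths captured by the restriction to $L_r(G, o)$ in Lemma~\ref{path counting} are sufficient to dominate the union-bound expression; this follows from the good-prefix property together with the choice $r = \sum_{i=1}^K a^{b^i}$, but should be stated explicitly to close the argument cleanly.
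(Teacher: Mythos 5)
Your approach is essentially the same as the paper's: you decompose the non-isomorphism event by locating the first step along a short graph path from the root whose lattice jump exceeds $a^{b^{j^*}}$ (forced by the triangle inequality and the choice $r = \sum_{i=1}^K a^{b^i}$), union-bound over $j^* \in \{1,\ldots,K\}$, and control each term by Lemma~\ref{path counting}; the paper does exactly this with events $\mathcal{I}_{n,j}$ and $\mathcal{I}_j$. You are slightly more explicit than the paper in spelling out that the passage from the existence-event to the expected path count is a Markov/first-moment step, and in justifying why the restriction to $L_r(G,o)$ in the limit graph captures every truncated bad path, but these are clarifications rather than a different route.
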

Now, using Lemma~\ref{local-lattice-cv} and Lemma~\ref{lattice-to-graph}, we can prove the main local convergence result.

\begin{proof}[Proof of Theorem~\ref{main-thm-kl-2}] 
The proof closely follows the approach used for \cite[Theorem 1.11]{van2021local}. We provide a brief sketch:
Fix some $a, b, K > 1$ and let $r = r(a,b,K) = a^b + a^{b^2} + a^{b^3} + \cdots + a^{b^K}$. Note that $$\frac{1}{n^2}\sum_{o_n\in V(G_n)}  \mathbbm{1}_K ((G_n,o_n) \simeq H_*)$$ can be rewritten as 
\begin{equation}
\begin{split}
    & \frac{1}{n^2}\sum_{o_n\in V(G_n)} \mathbbm{1}_K((G_n,o_n) \simeq H_* ) \mathbbm{1}_K((L_r(G_n, o_n),o_n) \simeq B_K(G_n, o_n))  \label{firstline} \\
    & + \frac{1}{n^2}\sum_{o_n\in V(G_n)}\mathbbm{1}_K((G_n,o_n) \simeq H_*) \mathbbm{1}_K((L_r(G_n, o_n),o_n) \not\simeq B_K(G_n, o_n)). 
\end{split}
\end{equation}

Notice that the first term in \eqref{firstline} is
$$\frac{1}{n^2}\sum_{o_n\in V(G_n)} \mathbbm{1}_K((L_r(G_n, o_n),o_n) \simeq H_*) - \mathbbm{1}_K((L_r(G_n, o_n),o_n) \simeq H_*)  \mathbbm{1}_K((L_r(G_n, o_n),o_n) \not\simeq B_K(G_n, o_n)).$$
Let $\epsilon_{r, n}$ be 
\begin{equation} \label{eps-def}
    \frac{1}{n^2}\sum_{o_n\in V(G_n)} \mathbbm{1}_K((L_r(G_n, o_n),o_n) \not\simeq B_K(G_n, o_n))\left[\mathbbm{1}_K((G_n,o_n) \simeq H_*)  - \mathbbm{1}_K((L_r(G_n, o_n),o_n) \simeq H_*)\right]
\end{equation}
giving
$$\frac{1}{n^2}\sum_{o_n\in V(G_n)}  \mathbbm{1}_K ((G_n,o_n) \simeq H_*) = \epsilon_{r, n} + \frac{1}{n^2}\sum_{o_n\in V(G_n)}\mathbbm{1}_K((L_r(G_n, o_n),o_n) \simeq H_*).$$

We know that $$ 0 \leq \lim_{b \to \infty} \lim_{n \to \infty} \mathbb{E}[|\epsilon_{n, K}|] \leq \lim_{b \to \infty} \lim_{n \to \infty} 2 \mu_{n, K}((L_r(G_n, o_n), o_n) \not\simeq B_K(G_n, o_n)) = 0$$
by \eqref{eps-def} and Lemma~\ref{lattice-to-graph}. Then, for any $\epsilon > 0$, $K > 0$, and $H_* = B_K(H_*, o_*)$, there exists large enough $b$ and $n$ such that
 $$ \mathbb{P} \left[ \left| \frac{1}{n^2} \sum_{o_n\in V(G_n)}  \mathbbm{1}_K ((G_n,o_n) \simeq H_*) -\mathbbm{1}_K((L_r(G_n, o_n),o_n) \simeq H_*)\right| > \epsilon \right] = \mathbb{P} [|\epsilon_{n, K}| > \epsilon] < \epsilon.$$
 
This gives $\frac{1}{n^2} \sum_{o_n\in V(G_n)}  \mathbbm{1}_K ((G_n,o_n) \simeq H_*) \overset{\mathbb P}{\to} \frac{1}{n^2} \sum_{o_n\in V(G_n)} \mathbbm{1}_K((L_r(G_n, o_n),o_n) \simeq H_*)$. By a similar argument, for any $\epsilon > 0$, $K > 0$, and $H_* = B_K(H_*, o_*)$, there exists large enough $b$ such that
$$\mathbb{P} \left[ \left| \mu_K ((G,o) \simeq H_*) -\mu_K((L_r(G, o),o) \simeq H_*)\right| > \epsilon \right] < \epsilon$$ and $\mu_K ((G,o) \simeq H_*) \to  \mu_K((L_r(G, o),o) \simeq H_*)$. Together with Lemma~\ref{local-lattice-cv}, this completes the proof \eqref{main-lemma-kl-2} and Theorem~\ref{main-thm-kl-2}. 
\end{proof}

\section{Local Limit of the Kleinberg Model at Criticality: Proof of Theorem~\ref{main-thm-kl-critical}} \label{proof-main-thm-kl-critical}

Since the critical limit is unmarked, our main convergence statement is for the underlying unmarked rooted graph. The role of the marks is only auxiliary, and we will show below that they collapse in every fixed-radius neighborhood. As before, it is enough to prove that for every \(r\ge 1\) and every finite rooted graph \(H_* = B_r(H_*,o_*)\) with \(\mu_r(H_*,o_*)>0\),
\begin{equation*}
\frac{1}{n^2}\sum_{u\in V(G_n)} \mathbbm{1}_r\big[(G_n,u)\simeq H_*\big]
\overset{\mathbb P}{\to}
\mu\Big(\mathbbm{1}_r\big[(G,o)\simeq H_*\big]\Big)
\qquad \text{as } n\to\infty.
\end{equation*}
As before, the proof will follow first and second moment bounds. Since the argument is parallel to the proofs in the subcritical and supercritical regimes, we defer the full details to Appendix~\ref{proof-main-thm-kl-critical-appendix} and include here only the main ingredients that explain why the critical case simplifies.

For \(u\in V(G_n)\), write
\[
D_n(u):=\sum_{v\neq u} d(u,v)^{-2}.
\]
This quantity is the normalization constant for the shortcut distribution in the finite graph: a shortcut from \(u\) lands at \(v\) with probability
\(
\frac{d(u,v)^{-2}}{D_n(u)}.
\)
It is therefore the critical analogue of the normalizing terms that appear in the other two regimes: For \(\ell<2\), the corresponding marked limit is governed by the kernel \(\|m-m'\|_1^{-\ell}\), normalized by
\(
\int_{[0,1]^2}\|m-x\|_1^{-\ell}\,dx,
\)
while for \(\ell>2\) the normalization converges to the finite constant \(4\zeta(\ell-1)\), yielding the limiting kernel
\(
\kappa(u,v)=\frac{d(u,v)^{-\ell}}{4\zeta(\ell-1)}.
\)
At criticality, however, this normalization diverges with \(n\).

The key new fact at the critical point \(\ell=2\) is that, although \(D_n(u)\) diverges, it does so at the same rate for every node that is not too close to the boundary.

\begin{lemma}[Uniform normalization at criticality]\label{lem:uniform-normalization}
For every \(u\in V(G_n)\),
\[
D_n(u)\ge \log n+O(1).
\]
Moreover, if \(u\in V(G_n)\) has lattice distance at least
\[
d_n:=\left\lfloor \frac{n}{\log n}\right\rfloor
\]
from the boundary of the \(n\times n\) grid, then
\[
D_n(u)=4\log n+O(\log\log n),
\]
uniformly in \(u\).
\end{lemma}

\begin{proof}
For the first claim, the lower bound used in the proof of Lemma~\ref{shortcut-length-distribution} gives
\[
D_n(u)>
\sum_{j=1}^{n-1} j^{1-2}
=\sum_{j=1}^{n-1}\frac1j
=\log n+O(1),
\]
uniformly in \(u\). If \(u\) is at lattice distance at least \(d_n\) from the boundary, then for every \(1\le j\le d_n\) there are exactly \(4j\) nodes at lattice distance \(j\) from \(u\). Hence
\[
D_n(u)\ge \sum_{j=1}^{d_n} 4j\cdot j^{-2}
=4\sum_{j=1}^{d_n}\frac1j
=4\log d_n+O(1)
=4\log n-O(\log\log n).
\]
On the other hand, for every \(u\), there are at most \(4j\) nodes at lattice distance \(j\), so
\[
D_n(u)\le \sum_{j=1}^{n-1} 4j\cdot j^{-2}
=4\sum_{j=1}^{n-1}\frac1j
=4\log n+O(1).
\]
Combining the lower and upper bounds yields the claim.
\end{proof}

Let \(G_{n-d_n}\) denote the induced subgraph of the \(n\times n\) grid on the nodes whose lattice distance from the boundary is at least
\(
d_n:=\left\lfloor \frac{n}{\log n}\right\rfloor .
\)
Lemma~\ref{lem:uniform-normalization} implies that all nodes in \(G_{n-d_n}\) have asymptotically the same normalization. Since
\[
d_n=o(n),
\]
the boundary layer is negligible. Accordingly, the next lemma, proved in Appendix~\ref{proof-main-thm-kl-critical-appendix}, shows that the bounded neighborhood of a uniformly chosen root lies in \(G_{n-d_n}\) with high probability.

\begin{lemma}[Boundary effects vanish]
\label{lem:boundary-avoidance}
Let \(d_n:=\lfloor n/\log n\rfloor\) as before, and let
\(G_n=K(n,q,k,\ell)\) with \(\ell\le 2\). If \(o_n\) is uniformly chosen from
\(V(G_n)\), then for every fixed \(r\ge 1\),
\[
    \mathbb P\bigl[B_r(G_n,o_n)\subseteq G_{n-d_n}\bigr]\to 1 .
\]
\end{lemma}

Once the exploration is confined to \(G_{n-d_n}\), the only remaining local randomness is the number of incoming shortcuts seen at each newly discovered node. The next lemma is the critical-case analogue of the previous Poisson approximation lemmas: the same mechanism applies, but here the mark dependence disappears, so the limiting parameter is simply \(q\). Its proof appears in Appendix~\ref{proof-main-thm-kl-critical-appendix}.

\begin{lemma}[Critical limit distribution for incoming shortcuts is Poisson]\label{lem:critical-poisson-input}
Fix \(i\ge 1\). Let the nodes revealed before the \(i\)-th step of the BFS exploration started from some root, \(S_i\), be a fixed finite set of nodes,
with \(|S_i|=O(1)\), and let \(u\notin S_i\) satisfy
\(
S_i\cup\{u\}\subseteq G_{n-d_n}.
\) Define
\(
\mathrm{In}(u,V(G_n)\setminus S_i)
\)
to be the number of incoming shortcuts to \(u\) from nodes from \(V(G_n)\setminus S_i\). Then, 
\[
\mathrm{In}(u,V(G_n)\setminus S_i)\xrightarrow{d}\mathrm{Poi}(q).
\]
\end{lemma}

Together, Lemmas~\ref{lem:uniform-normalization}, \ref{lem:boundary-avoidance}, and \ref{lem:critical-poisson-input} identify the local weak limit at criticality: Since the lattice neighborhood is deterministic, local shortcut cycles are negligible by Lemma~\ref{lem:no-local-shortcut-cycle}, and the number of incoming shortcuts to each newly revealed node converges in distribution to \(\mathrm{Poi}(q)\), it follows that the breadth-first exploration of \(B_r(G_n,o_n)\) converges weakly to the corresponding exploration of \(K_{=}(q,k,2)\).

\begin{remark}[Mark Collapse]\label{rem:critical-mark-collapse}
Let \(m\sim \mathrm{Unif}([0,1]^2)\) be independent of \(K_{=}(q,k,2)\), and assign the same mark \(m\) to every node of \(K_{=}(q,k,2)\). Then, this implies that \(G_n\) converges weakly in the marked local topology to this constant-marked version of \(K_{=}(q,k,2)\). Equivalently, one may disregard the marks and work with the underlying unmarked rooted graph.
\end{remark}

With this reformulation, the first-moment and second-moment arguments are the same in spirit as in the proofs of Theorem~\ref{main-thm-kl-1} and Theorem~\ref{main-thm-kl-2}: the first moment is obtained by multiplying the finitely many conditional probabilities along the breadth-first exploration while the second moment follows from the asymptotic independence of two bounded-radius explorations rooted at uniformly chosen nodes. This is carried out in Appendix~\ref{proof-main-thm-kl-critical-appendix}, and completes the proof of Theorem~\ref{main-thm-kl-critical}.

\section{The Giant Component of the Kleinberg Model under Bond Percolation: \\Proof of Theorem~\ref{thm:kleinberg-perc-giant}} \label{proof-applications-epidemic}

We distinguish two regimes.
When $\ell<2$, the Kleinberg model exhibits expansion properties with high probability \cite{flaxman}.
In this case, the conclusion follows directly from existing results showing that, for expander
sequences, the giant component under bond percolation is local and unique
~\cite{yeganeh-digraph}.

When $\ell > 2$,
we apply the ``giant is almost local'' theorem of van der Hofstad~\cite{vanderhofstad2023giant}.
Since bond percolation is a local operation, the local convergence of $(G_n,o_n)$ to
$(G,o)$ implies local convergence of the percolated graphs
$(G_n^{(p)},o_n)$ to $(G^{(p)},o)$ (see also \cite[lemma 3.1]{yeganeh-digraph}).
Thus, by~\cite[Theorem~2.2]{vanderhofstad2023giant}, it suffices to verify the coalescence
condition
\begin{equation}\label{eq:vdh-condition-proof}
\lim_{k\to\infty}\ \limsup_{n\to\infty}\ \frac{1}{|V(G_n)|^2}\,
\mathbb E\Big[\#\big\{(u,v)\in V(G_n)^2:\ |\mathcal C_n^{(p)}(u)|,|\mathcal C_n^{(p)}(v)|\ge k,\
u\not\leftrightarrow v\big\}\Big]\ =\ 0,
\end{equation}
where $\mathcal C_n^{(p)}(v)$ denotes the connected component of $v$ in $G_n^{(p)}$ and
$u\not\leftrightarrow v$ means that $u$ and $v$ are disconnected in $G_n^{(p)}$. 
Condition~\eqref{eq:vdh-condition-proof} rules out the presence of two disjoint percolation
clusters of large size.

In this regime, we verify the coalescence condition by comparison with bond percolation on
$\mathbb Z^2$. Define
\[
p_c((G, o)):=\inf\{p\in[0,1]:\mathbb P_p(o\leftrightarrow\infty)>0\},
\]
where $\mathbb P_p$ denotes the law of bond percolation with probability $p$, and
$\{o\leftrightarrow\infty\}$ is the event that the origin belongs to an infinite cluster after percolation,
equivalently, that there exists an infinite path starting from $o$ that has survived percolation.
It is known that, in the supercritical regime $p>p_c(\mathbb Z^2)=1/2$, bond percolation on
$\mathbb Z^2$ produces a unique giant component whose size is determined by the local
limit \cite{borgs2000lattice}. 
Since coalescence is known for supercritical bond percolation on $\mathbb{Z}^2$, it remains to show that the additional shortcut edges do not create two large disconnected components.

We verify the (stronger) condition
\begin{equation}\label{eq:vdH-condition-2.42}
\lim_{k\to\infty}\ \limsup_{n\to\infty}\ \mathbb P\left[|\mathcal C_n^{(p)}(o_n)|\ge k,\ o_n\notin \mathcal C_{\max}(G_n^{(p)})\right]=0,
\end{equation}
where \(\mathcal C_{\max}(G_n^{(p)})\) denotes the largest component of \(G_n^{(p)}\). Let $H_n\subseteq G_n$ denote the $\mathbb{Z}^2$ lattice subgraph (the deterministic lattice coming from the lattice edges of the Kleinberg model).  Let $H_n^{(p)}$ be its bond percolation with parameter $p$.
Let $\mathcal G_n^{\mathrm{lat}}$ denote the largest component of $H_n^{(p)}$.

Since $p>1/2=p_c(\mathbb Z^2)$, the percolation on the lattice is supercritical. 
Theorem~3.2 of~\cite{borgs2000lattice} yields
\[
\frac{|\mathcal G_{n}^{\mathrm{lat}}|}{n^2}
\;\xrightarrow{\mathbb P}\;
\theta_{\mathrm{lat}}(p),
\qquad
\theta_{\mathrm{lat}}(p):=\mathbb P_p(0\leftrightarrow\infty)>0.
\]
% In particular, for every $\alpha<\theta_{\mathrm{lat}}(p)$,
% \[
% \mathbb P\!\left(|\mathcal G_{n}^{\mathrm{lat}}|\ge \alpha n^2\right)\to 1.
% \]
Thus the lattice percolation already contributes a unique linear-size component.
Define $\mathcal G_n$ to be the component of $G_n^{(p)}$ that contains $\mathcal G_n^{\mathrm{lat}}$.
Then $\mathcal G_n$ is a candidate for the (unique) giant component of $G_n^{(p)}$.

For each node $v$, let $\vec e_v$ denote the \emph{first} long-edge connection (shortcut) from $v$
(as generated in the Kleinberg model).  This half-edge chooses an endpoint $Y_v$ with probability proportional
to $d(v,\cdot)^{-\ell}$ and then survives percolation with probability $p$ (independently of all other randomness). Because $\ell>2$, the normalization constant is upper bounded by
$4\zeta(\ell-1) < \infty,$
and hence uniformly in $n$ and $v$ the endpoint satisfies\[
\mathbb P[Y_v=u]\ \ge\ \frac{d(v,u)^{-\ell}}{4\zeta(\ell-1)}.
\]
Fix $R\ge1$, and call a node $v$ \emph{$R$-near} if $d(v,u)\le R$ (the distance in the underlying lattice) for some $u \in \mathcal G_n^{\mathrm{lat}}$. 
Then, for any \emph{$R$-near} node $v$,
\begin{equation}\label{eq:deltaR}
\mathbb P\bigl[\vec e_v \text{ survives percolation and connects to }\mathcal G_n^{\mathrm{lat}}\bigr]
\ \ge\
p\mathbb P[Y_v=u]
\ \ge\
\frac{p}{4\zeta(\ell-1) R^\ell} =: \delta_R.
\end{equation}
Moreover, the events in \eqref{eq:deltaR} are independent across distinct nodes $v$
because the first half-edges $(\vec e_v)_v$ are generated independently across $v$ and percolation is independent. Let $V_{R\mathrm{-far}}$ denote the set of nodes at lattice distance more than $R$ from $\mathcal G_n^{\mathrm{lat}}$:
\[
V_{R\mathrm{-far}}:=\{v\in V(G_n):\ d(v,u)>R \quad \forall u \in V(\mathcal G_n^{\mathrm{lat}})\}.
\]
For $p>1/2$, the dual percolation parameter $1-p<1/2$ is subcritical, hence, by \cite[ Theorem 1]{vanneuville2025exponential} there exists $c=c(p)>0$ such that
\begin{equation}\label{eq:hole-tail}
\limsup_{n\to\infty}\mathbb P[o_n\in V_{R\mathrm{-far}}]\ \le\ e^{-cR}
\qquad\text{for all }R\ge1.
\end{equation}

Fix $k > 1$, fix any $\gamma>0$, and set $R:=\lceil \gamma\log k\rceil$.
Consider the ``bad'' event
\[E_{n,k}:=\Bigl\{|\mathcal C_n^{(p)}(o_n)|\ge k\ \text{ and }\ o_n\notin \mathcal G_n\Bigr\}.\]
On $E_{n,k}$, the component $\mathcal C_n^{(p)}(o_n)$ is disjoint from $\mathcal G_n^{\mathrm{lat}}$ (since $\mathcal G_n$ contains $\mathcal G_n^{\mathrm{lat}}$).
Let
\[
V_{R\mathrm{-near}}:=\#\bigl(\mathcal C_n^{(p)}(o_n)\cap (G_n\setminus V_{R\mathrm{-far}})\bigr),
\]
be the number of $R$-near nodes in $\mathcal C_n^{(p)}(o_n)$.

\noindent\textbf{Case 1: $V_{R\mathrm{-near}}<k/2$.}
Then, at most $k/2$ nodes of $\mathcal C_n^{(p)}(o_n)$ do not lie in $V_{R\mathrm{-far}}$.
Since $o_n$ is uniform on $V(G_n)$, conditional on $C_n^{(p)}(o_n)$ the root is uniform on its own component; hence
$$\mathbb P[o_n\in V_{R\mathrm{-far}}\mid E_{n,k},V_{R\mathrm{-near}}<k/2]\ge 1/2.$$ Therefore, we have
\[
\mathbb P[E_{n,k},V_{R\mathrm{-near}}<k/2]\ \le\ 2\,\mathbb P[o_n\in V_{R\mathrm{-far}}].
\]
\noindent\textbf{Case 2: $V_{R\mathrm{-near}}\ge k/2$.}
On $E_{n,k}$, \emph{no} node of $\mathcal C_n^{(p)}(o_n)$ could have outgoing shortcut to $\mathcal G_n^{\mathrm{lat}}$,
in particular no $R$-near node in the component can have its first surviving long-rang connection to $\mathcal G_n^{\mathrm{lat}}$.
By \eqref{eq:deltaR} and independence across nodes,
\[
\mathbb P[E_{n,k},V_{R\mathrm{-near}}\ge k/2]\ \le\ (1-\delta_R)^{k/2}
\ \le\ \exp\Bigl(-\frac{\delta_R k}{2}\Bigr)
=\exp\Bigl(-\frac{p}{8\zeta(\ell-1)}\cdot \frac{k}{R^\ell}\Bigr).
\]

Combining both cases yields
\begin{equation}\label{eq:Ek-bound}
\mathbb P[E_{n,k}]
\ \le\
2\,\mathbb P(o_n\in V_{R\mathrm{-far}})
\ +\
\exp\Bigl(-\frac{p}{8\zeta(\ell-1)}\cdot \frac{k}{R^\ell}\Bigr).
\end{equation}
Using \eqref{eq:hole-tail} and $R=\lceil\gamma\log k\rceil$, for some constant $c'$,
\[
\limsup_{n\to\infty}\mathbb P[E_{n,k}]
\ \le\
2e^{-c\gamma\log k}
+\exp\Bigl(-c'\frac{k}{(\log k)^\ell}\Bigr)
=
2k^{-c\gamma}
+\exp\Bigl(-c'\frac{k}{(\log k)^\ell}\Bigr),
\]
which tends to $0$ as $k\to\infty$. Hence \eqref{eq:vdH-condition-2.42} holds for $\ell>2$ and $p>1/2$. The conclusion of Theorem~\ref{thm:kleinberg-perc-giant} then follows directly from
\cite[Theorem~2.2]{vanderhofstad2023giant}. \qed

\section{Concluding Remarks}

In this work, we established the local convergence of two foundational small-world network models—the Watts-Strogatz model and the Kleinberg model—providing theoretical insights into their asymptotic structures. For the Watts-Strogatz model, we characterized the local limit as the \(k\)-fuzz structure, demonstrating how rewiring induces a recursive process that preserves small-world properties while creating locally tree-like global structures. For the Kleinberg model, we showed a phase transition in the local limit at \(\ell = 2\), where the graph exhibits distinct behaviors depending on the decay of shortcut probabilities. 

Our results unify the analysis of these models through the framework of local convergence and highlight their implications for global network properties, such as clustering coefficients,  greedy maximal independent
set, number of spanning trees, PageRank distributions, and the performance of decentralized search algorithms. By leveraging local limits, we provide a comprehensive foundation for studying small-world networks, offering tools for analyzing both graph properties and dynamic processes on these graphs. 

\section*{Acknowledgments}

The authors thank Remco van der Hofstad for pointing out relevant literature on the Watts-Strogatz model, Christian Borgs and Thanawat Sornwanee for helpful discussions on earlier versions of this work, and Tselil Schramm for her guidance in navigating the random graph theory literature when this project began during the undergraduate years of Senem I\c{s}\i{}k.

Yeganeh Alimohammadi and Amin Saberi are supported by the Air Force Office of Scientific Research (AFOSR) under award number FA9550-23-1-0251. Senem I\c{s}\i{}k is supported by Stanford Management Science and Engineering departmental fellowship.

\bibliographystyle{plain}
\bibliography{main}

\appendix

\section{Metric on Rooted Marked Graphs: Omitted Proofs from Section~\ref{theory}} \label{metric-proof}
\begin{definition}[Metric on marked rooted graphs $\mathscr{G}_*$] \label{metric} 
Let $(G_1,o_1, M(G_1))$ and $(G_2,o_2, M(G_2))$ be two rooted marked graphs. Let $d_\Xi$ be a metric on the space of marks $\Xi$.  The distance metric that allows $\mathscr{G}_*$ to be interpreted as a metric space is as follows:
\begin{equation*}
    d_{\mathscr{G}_*}((G_1,o_1, M_1), (G_2,o_2, M_2)) = \frac{1}{1 + R^*}
\end{equation*}
where 
\begin{align*}
    R^* &= sup\{r: B_r(G_1, o_1) \simeq B_r(G_2, o_2), \\ &\exists \text{ } \pi \text{ such that } d_\Xi(M_1(u),M_2(\pi(u))\leq 1/r \text{ for all } u \in V(B_r(G_1, o_1)), \\ &d_\Xi(M_1((u, v)), M_2(\pi((u, v)))\leq 1/r \text{ for all } (u, v) \in E(B_r(G_1, o_1))\},
\end{align*}
where $\pi: V(B_r(G_1, o_1))\to V(B_r(G_2, o_2))$ running over all isomorphisms between $B_r(G_1, o_1)$ and $B_r(G_2, o_2)$. 
\end{definition}

We provide a short proof that $d_{\mathscr{G}_*}$ defines a metric on rooted marked graphs. 

\begin{lemma}
The function $d_{\mathscr{G}_*}$ defines a metric on rooted marked graphs.
\end{lemma}

\begin{proof}
Symmetry follows immediately from the definition, and
$d_{\mathscr G_*}(\mathbf G, \mathbf G)=0$ holds since identical rooted graphs agree on all radii.
If $\mathbf G_1 := (G_2,o_2,M_2) \neq (G_2,o_2,M_2) := \mathbf G_2$, then there exists some finite radius at
which they differ, so $d_{\mathscr G_*}(\mathbf G_1, \mathbf G_2) >0$ and positivity holds.

For the triangle inequality, let $R_{ij}$ be the maximum matching radius between
$\mathbf G_i := (G_i,o_i,M_i)$ and $\mathbf G_j := (G_j,o_j,M_j)$. Assume $\min\{R_{12}, R_{23}\} > 0$. Set
\[
h:=\frac{R_{12} R_{23}}{R_{12}+R_{23}}.
\]
Since $h\le \min\{R_{12}, R_{23}\}$, we may restrict a radius-$R_{12}$ matching between
$(G_1,o_1,M_1)$ and $(G_2,o_2,M_2)$, and a radius-$R_{23}$ matching
between $(G_2,o_2,M_2)$ and $(G_3,o_3,M_3)$, to the radius-$h$ balls.
On these radius-$h$ balls, the corresponding mark errors are at most $1/R12$ and $1/R23$, respectively. By the triangle inequality in $\Xi$, after composing
the two matchings the mark error is at most
\[
\frac{1}{R_{12}}+\frac{1}{R_{23}}=\frac{1}{h}.
\]
Hence $(G_1,o_1,M_1)$ and $(G_3,o_3,M_3)$ match for radius of at least $h$.
Therefore
\[
R_{13}\ge \frac{R_{12}R_{23}}{R_{12}+R_{23}}.
\]
Consequently,
\[
d_{\mathscr G_*}((G_1,o_1,M_1),(G_3,o_3,M_3))
= \frac{1}{1+R_{13}}
\le \frac{1}{1+\frac{R_{12}R_{23}}{R_{12}+R_{23}}}
\le \frac{1}{1+R_{12}}+\frac{1}{1+R_{23}}.
\]
Thus
\[
d_{\mathscr G_*}((G_1,o_1,M_1),(G_3,o_3,M_3))
\le
d_{\mathscr G_*}((G_1,o_1,M_1),(G_2,o_2,M_2))
+
d_{\mathscr G_*}((G_2,o_2,M_2),(G_3,o_3,M_3)).
\]
\end{proof}

\section{Second Moment of the WS Model: Proof of Lemma~\ref{omitted-ws-lemma}} \label{sec:omitted-ws-proof}

We closely follow the proof of Lemma~\ref{ws-1moment}, keeping track only of
the vertices deleted around \(v\). Fix \(\eta>0\). By Lemma~\ref{prelim-4},
there is \(M=M(r,\eta)\) such that
\[
    \sup_n \mathbb P\bigl[|B_{2r}(G_n,v)|>M\bigr]<\eta .
\]
Let \(\mathcal M_n:=\{|B_{2r}(G_n,v)|\le M\}\). On \(\mathcal M_n\), we have
\(|B_r(G_n,v)|\le M\) and \(|V(G_n^{v,2r})|\ge n-M\). Let
\(R_n:=\{u:d_{\mathrm{ring}}(u,B_r(G_n,v))\le 2kr\}\). Then on
\(\mathcal M_n\), \(|R_n|\le (4kr+1)M\), so only an \(O_M(1/n)\) fraction of
vertices \(w\in V(G_n^{v,2r})\) belong to \(R_n\). 

For \(w\notin R_n\), the radius-\(r\) ring neighborhood of \(w\) is
disjoint from \(B_r(G_n,v)\), so the deterministic ring part is the same as in
Lemma~\ref{ws-1moment}. Run the same BFS exploration from \(w\) in \(G_n^{v,r}\). The outgoing rewiring
decisions are unchanged, except that a visible shortcut endpoint is chosen from
\(n-c_n\) vertices, where \(c_n=O_{r,k,H_*,M}(1)\). This correction does not
affect the limit. For incoming shortcuts, conditional on the previously exposed
information \(T_{i-1}\), the number of eligible directed edges that may rewire
to the \(i\)-th explored vertex lies between \(nk-O_{r,k,H_*,M}(1)\) and
\(nk\), and each such edge chooses this vertex with probability
\(\phi/(n-c_n)\). Hence, for each fixed \(x_i\),
\[
\mathbb P\!\left[
v_{i,n}\text{ has }x_i\text{ incoming shortcuts}\mid T_{i-1},\mathcal M_n
\right]
\to
\frac{e^{-\phi k}(\phi k)^{x_i}}{x_i!}.
\]
We now define the good event for this exploration. Just before the \(j\)-th
shortcut associated with \(v_{i,n}\) is revealed, let \(S_{i,j}\) be the set of
vertices already exposed by the BFS and set
\(F_{i,j}:=\{u:d_{\mathrm{ring}}(u,S_{i,j})\le 2kr\}\cup R_n\). Let
\(\mathbf{Good}_i\) be the event that every outgoing shortcut target and every
incoming shortcut source revealed while exploring \(v_{i,n}\) lies outside its
corresponding \(F_{i,j}\). A shortcut violating this condition is called bad.
On \(\mathcal M_n\), \(|F_{i,j}|=O_{r,k,H_*,M}(1)\). Thus, conditional on
\(T_{i-1}\), \(\mathcal M_n\), and the event that \(v_{i,n}\) has \(x_i\)
incoming shortcuts, each revealed shortcut is bad with probability
\(O_{r,k,H_*,M}(1/n)\). Since at this step at most \(k+x_i\) shortcut endpoints
are revealed,
\[
    \mathbb P\!\left[
    \mathbf{Good}_i^c
    \,\middle|\,
    T_{i-1},\mathcal M_n,
    \{v_{i,n}\text{ has }x_i\text{ incoming shortcuts}\}
    \right]\to0 .
\]
It remains to check the incoming shortcut marks. Define
\[
    \mathcal G_i:=
    T_{i-1}\cap
    \{v_{i,n}\text{ has }x_i\text{ incoming shortcuts}\}\cap
    \mathbf{Good}_i\cap \mathcal M_n .
\]
Let \((M_{i,1},\dots,M_{i,x_i})\) be the ordered marks of these incoming
shortcuts. If \(N_m^{(i,j)}\) and \(N^{(i,j)}\) denote the eligible mark-\(m\)
and total eligible directed edges before the \(j\)-th incoming shortcut is
chosen, then on \(\mathcal M_n\), \(N_m^{(i,j)}=n-O_{r,k,H_*,M}(1)\) and
\(N^{(i,j)}=nk-O_{r,k,H_*,M}(1)\). Therefore, for every
\((m_1,\dots,m_{x_i})\in[k]^{x_i}\),
\[
\mathbb P\!\left[
(M_{i,1},\dots,M_{i,x_i})=(m_1,\dots,m_{x_i})
\,\middle|\,\mathcal G_i
\right]
=
\prod_{j=1}^{x_i}\frac{N_{m_j}^{(i,j)}}{N^{(i,j)}}
=
k^{-x_i}+O_{r,k,H_*,M}(1/n).
\]
Combining the convergence of incoming shortcut counts, the estimate for
\(\mathbf{Good}_i\), and the mark calculation, we obtain
\[
    \mathbb P_{w\sim V(G_n^{v,2r})}\!\left[
    B_r(G_n^{v,r},w)\simeq H_* \,\middle|\,  w \notin R_n, \mathcal M_n, 
    \right]
    \to \mu_r((G,o)=H_*).
\]

Finally, by the law of total probability, $\left|
\mathbb P_{w\sim V(G_n^{v,2r})}\!\left[
B_r(G_n^{v,r},w)\simeq H_*
\right]
-
\mu_r((G,o)=H_*)
\right| $ is at most 
\begin{align*}
\left|
\mathbb P_{w\sim V(G_n^{v,2r})}\!\left[
B_r(G_n^{v,r},w)\simeq H_* \,\middle|\,  w \notin R_n, \mathcal M_n
\right]
-
\mu_r((G,o)=H_*)
\right|
 + \mathbb P[ w \in R_n \mid \mathcal M_n] +
\mathbb P[\mathcal M_n^c].
\end{align*}
Since $\mathbb P[ w \in R_n \mid \mathcal M_n] = O_M(1/n)$, we have
\[
\limsup_{n\to\infty}
\left|
\mathbb P_{w\sim V(G_n^{v,2r})}\!\left[
B_r(G_n^{v,r},w)\simeq H_*
\right]
-
\mu_r((G,o)=H_*)
\right|
\le \eta .
\]
Since \(\eta>0\) was arbitrary, letting \(\eta\downarrow0\) proves the lemma.
\qed

\section{First Moment of the Kleinberg Model in Sub-critical Regime: \\ Omitted Proofs from Section~\ref{sec:proof-main-thm-kl-1}} \label{proof-kl-1moment}

\subsubsection*{Proof of Lemma~\ref{mark-conv}}

When $v_{i,n}$ is explored through a lattice connection, its mark is
deterministic given the mark of its patch root and the lattice offset. Hence
\[
\mathbb P\big[M(v_{i,n})=\mathcal M_{F_n}(i)\mid T_{i-1}\big]=1.
\]
It therefore remains to consider the shortcut cases.

\noindent\textbf{Outgoing shortcut: } $\mathbbm{1}(i,\mathrm{out})=1$.
Let $S_{i-1}$ be the finite set of nodes that should be excluded as targets for the shortcuts explored when or after $i$-th node is revealed conditioned on the exploration $T_{i-1}$. Then, we have
\[
\mathbb P\big[M(v_{i,n})=\mathcal M_{F_n}(i)\mid T_{i-1}\big]
=
\frac{d(v_{p(i),n},v_{i,n})^{-\ell}}
{\sum_{u\notin S_{i-1},\,u\neq v_{p(i),n}} d(v_{p(i),n},u)^{-\ell}}.
\]
Since $|S_{i-1}|=O(1)$, the denominator differs from the full normalization by
$o(n^{2-\ell})$. 
Since the lattice coordinates of \(v_{p(i),n}\) and \(v_{i,n}\) lie in the cells
corresponding to \(\mathcal M_{F_n}(p(i))\) and \(\mathcal M_{F_n}(i)\), we have
\[
\frac{d(v_{p(i),n},v_{i,n})}{n}
=
\|\mathcal M_{F_n}(p(i))-\mathcal M_{F_n}(i)\|_1 + O(n^{-1}).
\]
Then,
\[
\sum_{u\neq v_{p(i),n}} d(v_{p(i),n},u)^{-\ell}
=
n^{2-\ell}
\left(
\int_{[0,1]^2}\|\mathcal M_{F_n}(p(i))-x\|_1^{-\ell}\,dx
+o(1)
\right),
\]
while
\[
d(v_{p(i),n},v_{i,n})^{-\ell}
=
n^{-\ell}
\left(
\|\mathcal M_{F_n}(p(i))-\mathcal M_{F_n}(i)\|_1^{-\ell}
+o(1)
\right).
\]
Therefore
\[
n^2\,\mathbb P\big[M(v_{i,n})=\mathcal M_{F_n}(i)\mid T_{i-1}\big]
=
p_{\mathrm{out},\,\mathcal M_{F_n}(p(i))}\big(\mathcal M_{F_n}(i)\big)+o(1).
\]
Since $p_{\mathrm{out},\,\mathcal M_{F_n}(p(i))}$ is continuous on a neighborhood of \(\mathcal M_{F}(i)\) and $Q_i$ has
area $n^{-2}$, this is equivalent to
\[
\mathbb P\big[M(v_{i,n})=\mathcal M_{F_n}(i)\mid T_{i-1}\big]
=
\int_{Q_i} p_{\mathrm{out},\,\mathcal M_{F_n}(p(i))}(z)\,dz + o(n^{-2}).
\]
Now, for each $u\in V(G_n)$, let \(\mathrm{out}_{u,i}\) be the number of outgoing shortcuts of \(u\) whose destinations have already been revealed before the \(i\)-th BFS step.

\noindent\textbf{Incoming shortcut: } $\mathbbm{1}(i,\mathrm{in})=1$.
Here, we have 
\[
\mathbb P\big[M(v_{i,n})=\mathcal M_{F_n}(i)\mid T_{i-1}\big]
=
\frac{
(q-\mathrm{out}_{v_{i,n},i})
\dfrac{d(v_{i,n},v_{p(i),n})^{-\ell}}
{\sum_{w\neq v_{i,n}} d(v_{i,n},w)^{-\ell}}
}
{
\sum_{u \notin S_{i-1}}
(q-\mathrm{out}_{u,i})
\dfrac{d(u,v_{p(i),n})^{-\ell}}
{\sum_{w\neq u} d(u,w)^{-\ell}}
}.
\]
Since \(S_{i-1}\) is finite and only finitely many nodes \(u\) satisfy
\(\mathrm{out}_{u,i}>0\), replacing the exact denominator by
\[
q\sum_{u\neq v_{p(i),n}}
\frac{d(u,v_{p(i),n})^{-\ell}}{\sum_{w\neq u} d(u,w)^{-\ell}}
\]
changes only finitely many summands. Each such summand is \(O(n^{\ell-2})\),
because \(\sum_{w\neq u} d(u,w)^{-\ell}=\Theta(n^{2-\ell})\) for \(\ell<2\).
Hence the total error is \(o(1)\). The resulting sum is a Riemann sum for
\(\Lambda_{\mathcal M_{F_n}(p(i))}\), so
\[
q\sum_{u \neq v_{p(i),n}}
\frac{d(u, v_{p(i),n})^{-\ell}}
{\sum_{w\neq u} d(u,w)^{-\ell}}
+o(1)
=
\Lambda_{\mathcal M_{F_n}(p(i))} + o(1).
\]
Since $v_{i,n}$ has not yet been exposed, $\mathrm{out}_{v_{i,n},i}=0$. Therefore, by definition of
$p_{\mathrm{out},m}(m_{\mathrm{out}})$ and the definition of
$p_{\mathrm{in},m}(m_{\mathrm{in}})$, we have
\[
n^2\,\mathbb P\big[M(v_{i,n})=\mathcal M_{F_n}(i)\mid T_{i-1}\big]
=
\frac{q}{\Lambda_{\mathcal M_{F_n}(p(i))}}
\,p_{\mathrm{out},\,\mathcal M_{F_n}(i)}\big(\mathcal M_{F_n}(p(i))\big)
+o(1)
=
p_{\mathrm{in},\,\mathcal M_{F_n}(p(i))}\big(\mathcal M_{F_n}(i)\big)
+o(1).
\]
Because
$p_{\mathrm{in},\,\mathcal M_{F_n}(p(i))}$ is continuous on a neighborhood of \(\mathcal M_{F}(i)\) and $Q_i$ has area
$n^{-2}$, this is equivalent to
\[
\mathbb P\big[M(v_{i,n})=\mathcal M_{F_n}(i)\mid T_{i-1}\big]
=
\int_{Q_i} p_{\mathrm{in},\,\mathcal M_{F_n}(p(i))}(z)\,dz + o(n^{-2}).
\]
\qed

\subsubsection*{Proof of Lemma~\ref{incom-pois-conv}}

Let \(X_{i,n}\) be the number of incoming shortcuts of \(v_{i,n}\). We condition throughout on
\(
M(v_{i,n})=\mathcal M_{F_n}(i) \) and \( T_{i-1}.\)
For each \(u\in V(G_n)\setminus\{v_{i,n}\}\), let \(S_{u,i}\) be the set of destinations ruled out for a still-unrevealed outgoing shortcut from \(u\) by the event \(T_{i-1}\). Since \(T_{i-1}\) reveals only finitely many nodes and shortcuts, \(|S_{u,i}|=O(1)\) uniformly in \(u\). Define
\[
\tilde p_{u,i}^{(n)}:=\frac{d(u,v_{i,n})^{-\ell}}{\sum_{w\notin S_{u,i},\,w\neq u} d(u,w)^{-\ell}}.
\]
Then \(q-\mathrm{out}_{u,i}\) is the number of outgoing shortcuts from \(u\) whose destinations have not yet been revealed by \(T_{i-1}\), and each such shortcut lands at \(v_{i,n}\) with probability \(\tilde p_{u,i}^{(n)}\). Hence, if \(Y_{u,i}\) denotes the number of still-unrevealed outgoing shortcuts from \(u\) that land at \(v_{i,n}\), then
\[
Y_{u,i}\sim \mathrm{Bin}\!\left(q-\mathrm{out}_{u,i},\,\tilde p_{u,i}^{(n)}\right),
\]
and the family \(\{Y_{u,i}\}_{u\neq v_{i,n}}\) is independent across \(u\). Therefore
\[
X_{i,n}=\sum_{u\neq v_{i,n}}Y_{u,i}.
\]

Let
\[
\lambda_{i,n}:=\mathbb E\!\left[X_{i,n}\mid M(v_{i,n})=\mathcal M_{F_n}(i),\,T_{i-1}\right]
=\sum_{u\neq v_{i,n}}(q-\mathrm{out}_{u,i})\,\tilde p_{u,i}^{(n)}.
\]
Also define
\[
p_{u,i}^{(n)}:=\frac{d(u,v_{i,n})^{-\ell}}{\sum_{w\neq u} d(u,w)^{-\ell}}.
\]
Since \(|S_{u,i}|=O(1)\) and \(\sum_{w\neq u} d(u,w)^{-\ell}=\Theta(n^{2-\ell})\) uniformly in \(u\) for \(\ell<2\), we have
\[
\left|
\sum_{w\notin S_{u,i},\,w\neq u} d(u,w)^{-\ell}
-
\sum_{w\neq u} d(u,w)^{-\ell}
\right|=O(1),
\]
and therefore
\[
\tilde p_{u,i}^{(n)}=p_{u,i}^{(n)}\bigl(1+o(1)\bigr)
\]
uniformly in $u$. Moreover,
\[
\sum_{u\neq v_{i,n}} p_{u,i}^{(n)}
=
\frac{1}{q}\Lambda_{\mathcal M_{F_n}(i)}+o(1)=O(1).
\]
Since \(T_{i-1}\) reveals only finitely many shortcuts, only finitely many nodes \(u\) satisfy \(\mathrm{out}_{u,i}>0\). Hence
\[
\lambda_{i,n}
=
q\sum_{u\neq v_{i,n}} p_{u,i}^{(n)}+o(1).
\]
The sum on the right is the finite-\(n\) analogue of \(\Lambda_m\), so by the same Riemann-sum approximation used earlier,
\[
q\sum_{u\neq v_{i,n}} p_{u,i}^{(n)}
=
\Lambda_{\mathcal M_{F_n}(i)}+o(1).
\]
Thus, we have
\(
\lambda_{i,n}=\Lambda_{\mathcal M_{F_n}(i)}+o(1).
\)

Next, by Le Cam’s theorem,
\[
d_{\mathrm{TV}}\!\left(\mathcal L(X_{i,n}\mid M(v_{i,n})=\mathcal M_{F_n}(i),\,T_{i-1}),\,\mathrm{Poi}(\lambda_{i,n})\right)
\le
2\sum_{u\neq v_{i,n}}(q-\mathrm{out}_{u,i})(\tilde p_{u,i}^{(n)})^2.
\]
Since \(\tilde p_{u,i}^{(n)}=p_{u,i}^{(n)}(1+o(1))\) uniformly in \(u\), it suffices to show
\[
\sum_{u\neq v_{i,n}}(q-\mathrm{out}_{u,i})(p_{u,i}^{(n)})^2=o(1).
\]
Using \(\sum_{w\neq u} d(u,w)^{-\ell}=\Theta(n^{2-\ell})\) uniformly in \(u\), we get
\[
(p_{u,i}^{(n)})^2
=
O\!\left(\frac{d(u,v_{i,n})^{-2\ell}}{n^{2(2-\ell)}}\right).
\]
Since there are at most \(4m\) nodes at lattice distance \(m\) from \(v_{i,n}\),
\[
\sum_{u\neq v_{i,n}}(p_{u,i}^{(n)})^2
=
O\!\left(n^{-2(2-\ell)}\sum_{j=1}^{2n} j^{1-2\ell}\right).
\]
Now, notice that
\[
\sum_{j=1}^{2n} j^{1-2\ell}
=
\begin{cases}
O(n^{2-2\ell}),&\ell<1,\\
O(\log n),&\ell=1,\\
O(1),&1<\ell<2,
\end{cases}
\]
so, in all cases, we have
\[
\sum_{u\neq v_{i,n}}(p_{u,i}^{(n)})^2=o(1).
\]
Since \(q-\mathrm{out}_{u,i}\le q\), it follows that
\[
\sum_{u\neq v_{i,n}}(q-\mathrm{out}_{u,i})(p_{u,i}^{(n)})^2=o(1).
\]
Thus, we can conclude
\[
d_{\mathrm{TV}}\!\left(\mathcal L(X_{i,n}\mid M(v_{i,n})=\mathcal M_{F_n}(i),\,T_{i-1}),\,\mathrm{Poi}(\lambda_{i,n})\right)\to 0,
\]
where \(\mathcal L (\mathbf X )\) gives the distribution of random variable \(\mathbf X\).
Finally, since \(\lambda_{i,n}=\Lambda_{\mathcal M_{F_n}(i)}+o(1)\) and, for fixed \(x_i\), the map
\(
\lambda\mapsto e^{-\lambda}\frac{\lambda^{x_i}}{x_i!}
\)
is continuous on \([0,\infty)\), we obtain
\[
\mathbb P\!\left[X_{i,n}=x_i\mid M(v_{i,n})=\mathcal M_{F_n}(i),\,T_{i-1}\right]
=
\frac{e^{-\Lambda_{\mathcal M_{F_n}(i)}}\Lambda_{\mathcal M_{F_n}(i)}^{x_i}}{x_i!}+o(1).
\]
\qed

\section{Second Moment of the Kleinberg Model in Sub-critical Regime: \\ Proof of Lemma~\ref{kl-2moment}} \label{proof-kl-2moment}

For the second moment, we would like to show that 
\begin{equation} \label{goal-3}
    \begin{split}
        &\lim_{n \to \infty}\text{Var}\left(\frac{1}{n^2} \sum_{o_n\in V(G_n)}  \mathbbm{1}_r^\epsilon((G_n,o_n)\simeq H_* )  \right) \\ &=\mathbb{E}\left[\left(\frac{1}{n^2} \sum_{o_n\in V(G_n)}  \mathbbm{1}_r^\epsilon((G_n,o_n)\simeq H_*) \right)^2 \right] - \mathbb{E}\left[\frac{1}{n^2} \sum_{o_n\in V(G_n)}  \mathbbm{1}_r^\epsilon((G_n,o_n)\simeq H_*)  \right]^2 \to 0.
    \end{split}
\end{equation}

We will follow the same steps as the proof for the second moment for the Watts-Strogatz model (Lemma~\ref{ws-2moment}). The only difference is that instead of summing over all nodes, we need to sum over a subset of nodes because we need mark differences to be at most $\epsilon$. To do so, let's define 
$W_{\epsilon, m} = \left\{w \in V(G_n) \big| \left\|m - M(w) \right\|_1 < \epsilon \right\}$, where $M(w)$ is mark of node $w$, for any $\epsilon > 0$ and $m \in M$. Let the mark of the root of $H_*$ be $M(v_{0, *})$. By linearity of expectation,
\begin{equation} \label{goal-4}
    \begin{split}
&\lim_{n \to \infty} \mathbb{E}\left[\left(\frac{1}{n^2} \sum_{o_n\in V(G_n)}  \mathbbm{1}_r^\epsilon((G_n,o_n)\simeq H_*) \right)^2 \right] \\ 
    &= \lim_{n \to \infty} \frac{1}{n^4} \sum_{v_0 \in W_{\epsilon, m_*} } \sum_{\substack{w_0 \in W_{\epsilon, M(v_{0, *})} \\ w_0 \neq v_0}} \mathbb{P}\left[B_r(G_n,v_0)\simeq H_*  \right] \mathbb{P}\left[B_r(G_n,w_0)\simeq H_*  | B_r(G_n,v_0)\simeq H_* \right] \\
    &\to \lim_{n \to \infty} \frac{1}{n^2} \sum_{v_0 \in W_{\epsilon,m_*}} \frac{|W_{\epsilon,M(v_{0, *})}|-1}{n^2} \quad \mathbb{P}\left[B_r(G_n,v_0)\simeq H_*  \right] \mathbb{P}_{w_0}\left[B_r(G_n,w_0)\simeq H_*  | B_r(G_n,v_0)\simeq H_* \right]
    \end{split}
\end{equation}
where the first probability $\mathbb{P}$ is over the randomness of $G_n$ but $\mathbb{P}_{w_0}$ is \textit{also} over the randomness of root $w_0$ that is sampled uniformly at random from $W_{\epsilon, M(v_{0, *})} \backslash \{v_0\}$. Since $|W_{\epsilon,m}| \geq \sum_{j=1}^{\lfloor \epsilon n - 1 \rfloor} j = \frac{1}{2} \epsilon n ( \epsilon n - 1)$,
$$\mathbb{P}_{w_0}[w_0 \notin B_{2r}(G_n,v_0)] \geq 1- \frac{\mathbb{E}[|B_{2r}(G_n,v_0)]|}{\frac{1}{2}\epsilon^2 n^2} \to 1 \quad \text{ as $n \to \infty$}$$ 
by Lemma~\ref{prelim-4}. Recall $B_r(G_n,w_0)$ has finitely many shortcuts and $B_{2r}(G_n,v_0)$ is a finite set with probability tending to one. Lemma~\ref{lem:no-local-shortcut-cycle} implies that these shortcuts will not connect back to the finite set $B_r(G_n,w_0)$ to create a cycle with high probability. Similarly, none of them will connect to another finite and even remoter set $B_{2r}(G_n,v_0)$ in lattice distance with high probability. This implies $B_r(G_n,w_0) \cap B_{2r}(G_n,v_0) = \emptyset$ holds with probability tending to one. Then, $$\lim_{n \to \infty}  
 \mathbb{P}_{w_0}\left[B_r(G_n,w_0)\simeq H_*  | B_r(G_n,v_0)\simeq H_* \right]$$ is the same as
$$\lim_{n \to \infty} \mathbb{P}_{w_0}\left[B_r(G_n,w_0) \simeq H_* \big| B_r(G_n,v_0) \simeq H_*  \text{ and } 
B_r(G_n,w_0) \cap B_{r}(G_n,v_0) = \emptyset \right].$$

Finally, we have the following lemma giving the conditional independence of the events.
\begin{lemma} \label{conditional-independence-lemma}
    Let $\text{\textbf{B}}$ be the event that $\big(B_r(G_n,v_0) \simeq H_*\big)  \cap\big( B_r(G_n,w_0) \cap B_{r}(G_n,v_0) = \emptyset \big)$. Then, 
    \begin{equation} \label{conditional-independence}
        \lim_{n \to \infty} \mathbb{P}_{w_0}\left[B_r(G_n,w_0) \simeq H_* \big| \text{\textbf{B}} \right] = \lim_{n \to \infty} \mathbb{P}_{w_0}\left[B_r(G_n,w_0) \simeq H_* \right]
    \end{equation}
\end{lemma}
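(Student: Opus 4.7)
The plan is to re-run the step-by-step BFS analysis used in the proof of Lemma~\ref{kl-1moment} (and the coupling Lemma~\ref{coupling}) for the neighborhood $B_r(G_n,w_0)$, but now with the extra conditioning on the event $\textbf{B}$, and to show that this conditioning perturbs each one-step transition probability by at most $o(1)$. Concretely, we would order and mark the nodes of $B_r(G_n,w_0)$ by the same BFS rule used in Section~8, writing the probability that $B_r(G_n,w_0)$ equals a given ordered and marked graph $\overline{H_*}$ as a product
\begin{equation*}
\prod_{i\in[\overline{H_*}]}\mathbbm{P}\bigl[\text{$v_{i,n}$ has $x_i$ incoming shortcuts}\,\big|\,M(v_{i,n})=M_{F_n}(i),\,T_{i-1},\,\textbf{B}\bigr]\,\mathbbm{P}\bigl[M(v_{i,n})=M_{F_n}(i)\,\big|\,T_{i-1},\,\textbf{B}\bigr],
\end{equation*}
exactly as in \eqref{closed-form-left}. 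The goal is then to argue factor-by-factor that the presence of $\textbf{B}$ changes each probability by a vanishing amount, so that the full product has the same limit as the unconditioned one, giving \eqref{conditional-independence}.

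The key structural input is that $|B_r(G_n,v_0)|$ is bounded in probability (Lemma~\ref{prelim-2} together with Lemma~\ref{prelim-4}), and on $\textbf{B}$ it is disjoint from the $w_0$-side exploration. Conditioning on $\textbf{B}$ therefore only reveals a bounded-in-probability set of outgoing shortcuts (those emanating from nodes of $B_r(G_n,v_0)$) and forbids the $w_0$-side exploration from hitting the $o(n^2)$ set $B_r(G_n,v_0)$. For the mark distribution of the next BFS node $v_{i,n}$: if $v_{i,n}$ is reached through a lattice edge the mark is deterministic, while if it is reached through an out-shortcut (resp.\ an in-shortcut) the relevant normalizing sums $\sum_{u\neq v_{p(i),n}}d(v_{p(i),n},u)^{-\ell}$ (resp.\ $\sum_{u\neq v_{i,n}}(q-\textbf{Out}_{u,i})d(v_{i,n},u)^{-\ell}$) change by at most an additive $O(|B_r(G_n,v_0)|)$ term when we forbid targets inside $B_r(G_n,v_0)$ and subtract the finitely many used-up outgoing shortcuts; by exactly the estimates used in \eqref{goal-3-ref-1} and \eqref{goal-3-ref-2} (where the full sum diverges for $\ell\leq 2$) these corrections are $o(1)$. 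The same logic applies to the incoming Poisson intensity $\Lambda^{(n)}_{M_{F_n}(i)}$ in Lemma~\ref{incom-pois-conv}: subtracting a bounded-in-probability number of terms from $\sum_u [d(u,v_{i,n})]^{-\ell}/\sum_{v}[d(u,v)]^{-\ell}$ leaves the limit unchanged, and by Lemma~\ref{first lemma} no incoming shortcut to $B_r(G_n,w_0)$ originates inside $B_r(G_n,v_0)$ with probability tending to $1$.

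With those per-step perturbations controlled, one concludes just as in the proof of Lemma~\ref{kl-1moment}: the conditional closed form matches the unconditioned closed form \eqref{closed-form-right} in the limit, proving \eqref{conditional-independence}. The main obstacle is the bookkeeping in the second step, namely keeping track of how conditioning on the entire revealed subgraph $B_r(G_n,v_0)$ simultaneously (i) removes an $O_p(1)$ set of candidate targets for out-shortcuts from $w_0$'s side, (ii) subtracts an $O_p(1)$ number of outgoing shortcuts from each potential source of in-shortcuts, and (iii) forbids in-shortcut sources from lying in $B_r(G_n,v_0)$; one has to verify that each of these corrections is negligible uniformly along the BFS, which is where the divergence of $\sum_{v}[d(u,v)]^{-\ell}$ for $\ell\leq 2$ and the bounded-in-probability size of the explored neighborhoods do the work.
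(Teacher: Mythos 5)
Your plan is correct and follows essentially the same route as the paper's proof: both decompose the conditional law of $B_r(G_n,w_0)$ into per-step BFS factors as in \eqref{closed-form-left}, and then show that conditioning on $\textbf{B}$ perturbs each mark-transition probability (cases as in \eqref{goal-3-ref-1}, \eqref{goal-3-ref-2}) and each incoming-shortcut Poisson factor by an $O_p(1)$-sized correction that is $o(1)$ relative to the divergent normalizing sum $\sum_v d(u,v)^{-\ell}$ for $\ell\le 2$, with the bounded-in-probability size of $B_r(G_n,v_0)$ (Lemmas~\ref{prelim-2}, \ref{prelim-4}) doing the work. The paper formalizes this by sandwiching the conditional normalizing sums between versions that exclude $B_r(G_n,v_0)$ and $B_{2r}(G_n,v_0)$ and by shrinking the feasible source set $\textbf{S}_i$ to exclude $B_{2r}(G_n,v_0)$, but the underlying estimates are exactly the ones you identify.
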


Assuming Lemma~\ref{conditional-independence-lemma}, the proof of Lemma~\ref{kl-2moment} follows from \eqref{goal-3} and \eqref{goal-4}. \qed

\subsection*{Proof of Lemma~\ref{conditional-independence-lemma}}

Let $v_{i,n}$ and $v_{i,*}$ be the $i$-th node in $(\overline{G}_n, w_0)$ and $\overline{H_*}$ respectively, let $M(v_{i,n})$ and $M(v_{i,*})$ be their corresponding marks, let $x_i$ be the number of incoming shortcuts of $v_{i,*}$, and let $T_i$ be the event that $(\overline{G}_n, w_0)$ and $H_*$ look identical up until the $i$-th node in the BFS. Then, similar to \eqref{closed-form-left}, we can write the right-hand side of \eqref{conditional-independence} as the limit of
\begin{equation*} 
     \prod_{i =0}^{|H_*|-1} \mathbb{P}[\text{$v_{i,n}$ has $x_i$ incoming shortcuts} \mid M(v_{i,n}) = M(v_{i,*}), T_{i-1}] \, \mathbb{P}[M(v_{i,n}) = M(v_{i,*}) \mid T_{i-1}],
\end{equation*}
and the left-hand side similarly, except that each probability is additionally conditioned on \textbf{B}.

Since $\overline{H_*}$ is a finite graph, to prove Lemma~\ref{conditional-independence-lemma}, it suffices to prove the following two statements:
\begin{equation} \label{statement-3}
    \lim_{n \to \infty} 
 \mathbb{P}[M(v_{i,n}) = M(v_{i,*}) \mid T_{i-1} \text{ and \textbf{B}}] = \lim_{n \to \infty}  \mathbb{P}[M(v_{i,n}) = M(v_{i,*}) \mid T_{i-1}],
\end{equation}
and
\begin{equation} \label{statement-4}
\begin{split}
    & \lim_{n \to \infty}  \mathbb{P}[\text{$v_{i,n}$ has $x_i$ incoming shortcuts} \mid M(v_{i,n}) = M(v_{i,*}), T_{i-1}, \text{ and \textbf{B}}]\\
    &= \lim_{n \to \infty} \mathbb{P}[\text{$v_{i,n}$ has $x_i$ incoming shortcuts} \mid M(v_{i,n}) = M(v_{i,*}), T_{i-1}].
\end{split}
\end{equation}

Before doing so, note that, since \textbf{B} includes \(B_r(G_n,v_0)\simeq H^*\), the set \(B_r(G_n,v_0)\) has exactly \(|V(H^*)|\) nodes on \textbf{B}. In particular, \(|B_r(G_n,v_0)|=O(1)\) on \textbf{B}.

Let's start with \eqref{statement-3}. We are going to rely on parts of the proof of Lemma~\ref{mark-conv}, so we only mention the important differences. We have two types of shortcuts, giving two cases.

\noindent\textbf{Outgoing shortcut.}
Let $S_{i-1}$ be the finite set of nodes that should be excluded as targets for the shortcuts explored when or after the $i$-th node is revealed, conditioned on the exploration $T_{i-1}$. Since \textbf{B} also includes \(B_r(G_n,w_0)\cap B_r(G_n,v_0)=\varnothing\), the prescribed target \(v_{i,n}\) is not excluded by \(B_r(G_n,v_0)\). Thus only the denominator is modified, and we have
\[
\mathbb P\big[M(v_{i,n})=\mathcal M_{F_n}(i)\mid T_{i-1} \text{ and \textbf{B}}\big]
=
\frac{d(v_{p(i),n},v_{i,n})^{-\ell}}
{\sum_{u\notin S_{i-1}\cup B_r(G_n, v_0),\,u\neq v_{p(i),n}} d(v_{p(i),n},u)^{-\ell}}.
\]
Since \(|S_{i-1}\cup B_r(G_n,v_0)|=O(1)\) on \textbf{B}, the denominator differs from the full normalization by \(o(n^{2-\ell})\), and the rest of the argument can be carried out analogously to Lemma~\ref{mark-conv}.

\noindent\textbf{Incoming shortcut.} 
Here again the prescribed source \(v_{i,n}\) is not removed by conditioning on \textbf{B}, so only the denominator changes. We have
\[
\mathbb P\big[M(v_{i,n})=\mathcal M_{F_n}(i)\mid T_{i-1} \text{ and \textbf{B}}\big]
=
\frac{
(q-\mathrm{out}_{v_{i,n},i})
\dfrac{d(v_{i,n},v_{p(i),n})^{-\ell}}
{\sum_{w\neq v_{i,n}} d(v_{i,n},w)^{-\ell}}
}
{
\sum_{u \notin S_{i-1} \cup B_r(G_n, v_0)}
(q-\mathrm{out}_{u,i})
\dfrac{d(u,v_{p(i),n})^{-\ell}}
{\sum_{w\neq u} d(u,w)^{-\ell}}
}.
\]
Since \(|S_{i-1}\cup B_r(G_n,v_0)|=O(1)\) on \textbf{B}, the same argument as in Lemma~\ref{mark-conv} applies.

Next, we move to identity \eqref{statement-4}. Again, we are going to rely on parts of the proof of Lemma~\ref{incom-pois-conv}, so we only mention the important differences. Similar to Lemma~\ref{incom-pois-conv}, let \(S_{u,i}\) be the set of destinations ruled out for a still-unrevealed outgoing shortcut from \(u\) by the event \(T_{i-1}\). Now define
\[
\tilde p_{u,i}^{(n),B}:=\frac{d(u,v_{i,n})^{-\ell}}{\sum_{w\notin S_{u,i}\cup B_r(G_n, v_0),\,w\neq u} d(u,w)^{-\ell}}.
\]
In other words, \(\tilde p_{u,i}^{(n),B}\) is the probability that one of the still-unrevealed outgoing shortcuts from \(u\) lands at \(v_{i,n}\), once we also exclude the nodes in \(B_r(G_n,v_0)\). Since \(|S_{u,i}\cup B_r(G_n,v_0)|=O(1)\) uniformly in \(u\), we have
\[
\tilde p_{u,i}^{(n),B}=\tilde p_{u,i}^{(n)}(1+o(1))
\]
uniformly in \(u\). Therefore, the same mean estimate and Le Cam bound as in Lemma~\ref{incom-pois-conv} continue to hold, and the same argument proves \eqref{statement-4}.
\qed

\section{Local Limit of the Kleinberg Model in Super-critical Regime: \\ Omitted Proofs from Section~\ref{proof-main-thm-kl-2}} \label{moment-kl}

\subsection*{Proof of Lemma
\ref{lattice-cv-1}}
Suppose \((H_*, o_*)\) is a sample from \(\mu_r^L\) such that \(\mu_r^L(H_*, o_*) > 0\), i.e., it can be sampled with positive probability. 
We will order the nodes in $L_r(G_n, o_n)$, $L_r(G, o)$ and $L_r(H_*,o_*)$ such that the nodes with smaller $y$ coordinates take precedence over others and ties are broken by giving precedence to nodes with smaller $x$ coordinates. The first node in this ordering would be the top-left node. Notice that this naturally gives an ordering of each shortcut: the shortcut that connects nodes with a smaller sum of orders takes precedence. With slight abuse of notation, let $L_r(\overline{G}_n, o_n)$,  $L_r(\overline{G}, o)$, and $\overline{H_*} = L_r(\overline{H_*},o_*)$ be the ordered versions. Note that this ordering changes depending on $r$ for $G$ and $H_*$ and is based on both $o_n$ and $r$ for $G_n$. As in the case of the Watts-Strogatz model, for the first moment, it suffices to prove that 
\[
\mu_{r, n}^L((\overline{G_n},o_n) = \overline{H_*}) \to \mu_{r}^L((\overline{G},o) = \overline{H_*}).
\]

The lattice edges in $L_r(G, o)$ and $L_r(H_*,o_*)$ are formed following the same procedure by definition. 
For the finite case, as long as the distance of $o_n$ to the boundary is at least $r$, the lattice edges observed in $L_r(G_n, o_n)$ and $L_r(H_*,o_*)$ are the same too. This happens with high probability since
\[
\frac{(n-2r)^2}{n^2}=1-\frac{4r}{n} + O(n^{-2}).
\]
Henceforth, we will assume that all lattice edges are the same. Given that $o_n$ is distant enough for this to hold, we will now explore the outgoing shortcuts of each node in $L_r(G_n, o_n)$ following the ordering to give a closed form for $\mu_{r, n}^L((\overline{G_n},o_n) = \overline{H_*})$. 

Let $v_i$ be the $i$-th node in $L_r(\overline{G}_n, o_n)$. For each $i$, let
\(
u_{i,1},\dots,u_{i,s_i}
\)
be the distinct nodes of $L_r(H_*,o_*)$ that receive an outgoing shortcut from the $i$-th node of $L_r(\overline{H_*},o_*)$, and let
\(
a_{i,1},\dots,a_{i,s_i}
\)
be the corresponding multiplicities. Thus
\(
x_i:=\sum_{m=1}^{s_i} a_{i,m}
\)
is the total number of outgoing shortcuts of the $i$-th node in $L_r(\overline{H_*},o_*)$ that connect to nodes in $L_r(H_*,o_*)$. Equivalently, the multiset of distances
\(
\{d(v_i,u_{i,m}) : 1\le m\le s_i\}
\)
with multiplicities \(a_{i,m}\) is the same data as the multiset \(\{d_{i,j}\}_{j=1}^{x_i}\).

Recall that the probability that an outgoing shortcut of $v$ connects to $w$ is $\frac{[d(v, w)]^{-\ell}}{4\zeta(\ell-1)}$ in $G$ and $\frac{[d(v, w)]^{-\ell}}{D_{v}^{(n)}}$ in $G_n$, where
\(
D_{v}^{(n)}=\sum_{\substack{u\neq v \\ u \in V(G_n)}} d(v,u)^{-\ell}.
\)
Since the $q$ outgoing shortcuts of each node are independent, the probability that the $q$ outgoing shortcuts of $v_i$ realize exactly the internal shortcut pattern prescribed by \(\overline{H_*}\) is given by the corresponding multinomial probability. Therefore,
\begin{align}
\mu_{r, n}^L((\overline{G_n},o_n) = \overline{H_*} \mid d(o_n, \partial G_n) \geq r)
= \prod_{i =0}^{|H_*|-1}
&\frac{q!}{(q-x_i)!\prod_{m=1}^{s_i} a_{i,m}!}
\left(\frac{\sum_{\substack{w \neq v_i \\ w \in V(L_r(G_n, o_n))}} [d(v_i, w)]^{-\ell}}{D_{v_i}^{(n)}}\right)^{x_i} \label{line-1-new} \\
&\prod_{m=1}^{s_i}
\left(
\frac{d(v_i,u_{i,m})^{-\ell}}
{\sum_{\substack{w \neq v_i \\ w \in V(L_r(G_n, o_n))}} [d(v_i, w)]^{-\ell}}
\right)^{a_{i,m}} \label{line-2-new} \\
&\left(1-\frac{\sum_{\substack{w \neq v_i \\ w \in V(L_r(G_n, o_n))}} [d(v_i, w)]^{-\ell}}{D_{v_i}^{(n)}}\right)^{q-x_i}. \label{line-3-new}
\end{align}
Here, the multinomial coefficient counts the number of ways to assign \(x_i\) of the \(q\) outgoing shortcuts of \(v_i\) to the prescribed internal targets with multiplicities \(a_{i,1},\dots,a_{i,s_i}\); the factor in \eqref{line-1-new} gives the probability that exactly those \(x_i\) shortcuts connect to nodes in \(L_r(G_n,o_n)\); the product in \eqref{line-2-new} gives the conditional probability that those \(x_i\) shortcuts connect to the specific internal target nodes with the prescribed multiplicities; and \eqref{line-3-new} gives the probability that the remaining \(q-x_i\) shortcuts connect outside \(L_r(G_n,o_n)\).

On the other hand, the closed form for $\mu_{r}^L((\overline{G},o) = (\overline{H_*},o_*))$ is the same as above except $L_r(G_n, o_n)$ is replaced with $L_r(G, o)$ and all $D_{v_i}^{(n)}$'s are replaced with $4\zeta(\ell -1)$. Since for fixed $r$ the lattice neighborhoods involve only finitely many nodes, it suffices to show the following lemma to conclude the proof. 

\begin{lemma} Fix any $r > 0$ and $\epsilon > 0$. For a given node $v_i$, let $D_{v_i}^{(n)}=\sum_{\substack{u\neq v_i \\ u \in V(G_n)}} d(v_i,u)^{-\ell}$.
Then, 
$$\mathbb{P}\left[\left|\frac{1}{D_{v_i}^{(n)}} - \frac{1}{4\zeta(\ell-1)}\right| \leq \epsilon \text{ for all } v_i \in V(L_r(G_n, o_n))\right] \to 1$$
as $n \to \infty$ where the probability is over the uniform distribution of $o_n$ over all the nodes with a distance at least $r$ to the boundary of $G_n$.
\end{lemma}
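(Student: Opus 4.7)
The plan is to exploit the fact that $\ell>2$ makes the full infinite-lattice sum
\[
    \sum_{u\in\mathbb{Z}^2\setminus\{v\}} d(v,u)^{-\ell} \;=\; \sum_{d=1}^{\infty} 4d \cdot d^{-\ell} \;=\; 4\zeta(\ell-1)
\]
finite, so that $D_v^{(n)}$ differs from $4\zeta(\ell-1)$ only by the ``missing'' terms coming from lattice sites beyond the $n\times n$ square. Since $1/x$ is Lipschitz on $[c,\infty)$ for any $c>0$ and $D_v^{(n)}$ is bounded below (e.g.\ $D_v^{(n)}\geq 1$), it will suffice to produce a deterministic bound of the form $|D_v^{(n)}-4\zeta(\ell-1)|<\epsilon'$ for a well-chosen $\epsilon'=\epsilon'(\epsilon)$.

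The first step is to fix a constant $R=R(\epsilon)$ large enough that the tail
\[
    4\sum_{d=R+1}^{\infty} d^{1-\ell} \;<\; \epsilon',
\]
which is possible because $\ell-1>1$. For any node $v\in V(G_n)$ whose lattice distance to the boundary is at least $R$, the ball $\{u:d(u,v)\leq R\}$ lies entirely inside $V(G_n)$ and contributes exactly $\sum_{d=1}^{R} 4d\cdot d^{-\ell}$ to $D_v^{(n)}$; the missing contributions (lattice sites at distance $>R$ that would have existed in the infinite lattice but do not exist in $G_n$) are bounded above by the chosen tail. This yields the deterministic two-sided bound
\[
    4\zeta(\ell-1)-\epsilon' \;<\; D_v^{(n)} \;\leq\; 4\zeta(\ell-1),
\]
and hence $|1/D_v^{(n)}-1/(4\zeta(\ell-1))|<\epsilon$ for every such $v$.

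The second step is to verify that every $v_i\in V(L_r(G_n,o_n))$ satisfies the boundary condition above with high probability. By the triangle inequality, if $o_n$ has lattice distance at least $R+r$ from the boundary of $G_n$, then every $v_i$ at lattice distance $\leq r$ from $o_n$ has distance at least $R$ from the boundary, and the preceding paragraph applies uniformly. Since $o_n$ is sampled uniformly over nodes at distance $\geq r$ from the boundary, a crude count shows that the fraction of such nodes that are additionally within distance $R+r-1$ of the boundary is $O((R+r)/n)\to 0$. Combining the two steps gives
\[
    \mathbb{P}\!\left[\,\left|\tfrac{1}{D_{v_i}^{(n)}}-\tfrac{1}{4\zeta(\ell-1)}\right|\leq \epsilon \ \text{for all } v_i\in V(L_r(G_n,o_n))\right] \;\geq\; 1-O\!\left(\tfrac{R+r}{n}\right)\;\to\;1,
\]
completing the proof. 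There is no genuinely hard step here; the only care required is choosing $\epsilon'$ in terms of $\epsilon$ using the uniform lower bound $D_v^{(n)}\geq 1$ (so that $1/x$ is Lipschitz on the relevant range), and making sure the constant $R$ is fixed \emph{before} taking $n\to\infty$.
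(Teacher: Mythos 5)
Your proposal is correct and follows essentially the same approach as the paper: truncate the Riemann-zeta-type sum at a constant radius $R$ chosen so the infinite tail is small, observe that for vertices at lattice distance $\geq R$ from the boundary the partial sum $D_v^{(n)}$ is sandwiched between $4\zeta(\ell-1)-\epsilon'$ and $4\zeta(\ell-1)$, and then note that the fraction of admissible roots $o_n$ lying within distance $R+r$ of the boundary vanishes as $n\to\infty$. The only cosmetic differences are that you use the crude lower bound $D_v^{(n)}\geq 1$ for the Lipschitz estimate on $1/x$ (the paper instead uses the sharper lower bound $4\zeta(\ell-1)-4\delta$, but either works), and that your boundary-fraction estimate $O((R+r)/n)$ is in fact the correct order of magnitude, whereas the paper writes $4R_\delta^2/n^2$, which appears to be an overcount in the denominator but does not affect the conclusion since both tend to zero.
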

\begin{proof}
    Take any $\epsilon > 0 $ and choose some $\delta < \frac{16\epsilon \zeta(\ell-1)^2}{4 + 16 \epsilon  \zeta(\ell-1)}$. We know  for $\ell > 2$, $\sum_{d=1}^\infty d^{1-\ell}=\zeta(\ell-1)$. Then, there is $R_\delta$ such that $\zeta(\ell-1)-\sum_{d=1}^{R_\delta} d^{1-\ell} \leq \delta.$ If a node $v$'s distance from the boundary is at least $R_\delta$, then 
    \begin{align*}
    \sum_{\substack{v\neq u \\ v \in V(G_n)}} d(v, u)^{-\ell}&\geq \sum_{\substack{v\neq u \\ v\in B_{R_\delta}(u)}} d(v, u)^{-\ell} = 4\sum_{d=1}^{ R_\delta} d^{1-\ell}\geq 4\zeta(\ell-1)-4\delta.
    \end{align*}
    On the other hand,
    \begin{align*}
    \sum_{\substack{u\neq v \\ v \in V(G_n)}} d(u,v)^{-\ell}&\leq 4\sum_{d=1}^n d^{1-\ell}  \leq 4\zeta(\ell-1).
    \end{align*}
    We chose $\delta < \frac{16\epsilon \zeta(\ell-1)^2}{4 + 16 \epsilon  \zeta(\ell-1)}$. Assume $v_i$'s distance to the boundary is greater than $R_\delta$, we have
    \begin{align*}
        \left|\frac{1}{D_{v_i}^{(n)}} -\frac{1}{4\zeta(\ell-1)}\right| \leq \frac{4\delta}{4\zeta(\ell-1)(4\zeta(\ell-1) -4\delta)} \leq \epsilon.
    \end{align*}
    Notice $$\left|\frac{1}{D_{u_i}^{(n)}} - \frac{1}{4\zeta(\ell-1)}\right| \leq \epsilon$$ is guaranteed to happen for all  $v_i \in V(L_r(G_n, o_n))$ if $o_n$ is at least $r+R_\delta$ far in lattice distance from the boundary. The probability of this event is  at least $$\frac{(n-2(r+R_\delta))^2}{n^2} \to 1.$$
    concluding the proof of this lemma.
\end{proof}
By taking a union bound over all nodes in $L_r(G_n,o_n)$ and applying the lemma above, we get the proof of the first-moment for the $r$-lattice neighborhoods. \qed

\subsection*{Proof of Lemma
\ref{lattice-cv-2}} 

As the argument follows a similar structure to the second-moment proofs presented earlier, we provide a brief outline here, focusing on the key steps.  We are interested in showing that 
\begin{align*}
    & \lim_{n \to \infty} \Bigg(\mathbb{E}\Bigg[\Big(\frac{1}{n^2} \sum_{o_n\in V(G_n)}  \mathbbm{1}_r^L((G_n,o_n)\simeq H_*) \Big)^2\Bigg] \\
    &\quad - \frac{1}{n^4} \sum_{v_0,w_0\in V(G_n), d(v_0,w_0)>2r} \mathbb{P}\left[L_r(G_n,v_0)\simeq H_* \right] \mathbb{P}%_{w_0}
    \left[L_r(G_n,w_0)\simeq H_* %| L_r(G_n,v_0) \simeq H_*
    \right]\Bigg) = 0.
\end{align*}
First, note that when $L_r(G_n,v_0) \cap L_r(G_n,w_0) = \emptyset $, the events $\{L_r(G_n,v_0) \simeq H_*\}$ and  $\{L_r(G_n,w_0) \simeq H_*\}$ are independent. 
So to prove the result, it is enough to show that 
\begin{align}\label{goal-lattice-1m}
  \lim_{n\to\infty}\frac{1}{n^4}   \sum_{v_0,w_0\in V(G_n), d(v_0,w_0)<2r} \mathbb{P}\left[L_r(G_n,v_0)\simeq H_*, L_r(G_n,w_0)\simeq H_* %| L_r(G_n,v_0) \simeq H_*
    \right]\Big) = 0
\end{align}
Note that for any finite $r$, we know that $|L_{2r}(G_n,v_0)|$ is bounded by a constant $C$ independent of $n$. As a result, number of pairs whose $r$-lattice neighborhood overlap (i.e., $L_r(G_n,v_0) \cap L_r(G_n,w_0) \neq \emptyset$) is upper bounded by $ C n^2$. 
Since each probability is bounded by $1$, the sum of terms that appear in the left-hand side of \eqref{goal-lattice-1m} is at most $\frac{Cn^2}{n^4}$, which converges to zero as desired.

Now, one can follow the same steps as the ones in Section~\ref{secondmoment} to conclude the proof.  \qed

Recall, the definition
\begin{align*}
    \mathcal{I}_{n,j} :&= \{  \mathcal{U}_n =  (u_1, \dots, u_j) | (u_0, u_1, \dots, u_j) \text{ is a } j\text{-path in } (G_n, o_n) \text{ starting from } u_0 = o_n, \\
     &\forall 0\leq i \leq j-2, d(u_i, u_{i+1}) \leq a^{b^{i+1}}, d(u_{j-1}, u_{j}) > a^{b^{j}}\}
\end{align*}
 and $\mathcal{I}_{j}$ is the corresponding set in $(G, o)$. 

\subsection*{Proof of Lemma~\ref{path counting}} \label{path counting proof}

Since the proof is very similar to the proof of \cite[Lemma 3.3]{van2021local}, we will be brief. By Fubini's theorem for infinite series,
\begin{align*}
&\lim_{b \to \infty} \lim_{n \to \infty} \sum_{u \in \mathcal{I}_{n,j}} P(G_n, u) \left( \prod_{i=1}^{j-1} \mathbbm{1} \left( d(u_i, u_{i+1}) \leq a^{b^{i+1}} \right) \right)
\mathbbm{1} \left( d(u_{j-1}, u_j) > a^{m^{j-1}} \right) \\
& \leq  \lim_{b \to \infty} C_1 (a^b)^2 \cdots (a^{b^{j}})^2 \mathbb{E}_{(u_{j-1}, u_j) \in I_n^{2}} \left[\mathbbm{1} \left( d(u_{j-1}, u_j) > a^{b^j} \right) \right]\\
\intertext{for some constant $C_1$, and}
&\leq \lim_{b \to \infty} C_1 (a^b)^2 \cdots (a^{b^{j-1}})^2  8q (\ell-2)^{-1} a^{b^j (2-\ell)} \\
\intertext{by substituting $d = a^{b^j}$ to inequality \eqref{klein} from \cite{kleinberg}, and taking union bound over at most $2q$ shortcuts between any two nodes. Then, for some constant $C_2$, }
&\leq \lim_{b \to \infty} C_2 a^{2b + \cdots + 2b^{j-1} - b^j(\ell -2)} = 0
\end{align*}
for any $a > 1$ and $\ell > 2$. The proof for \eqref{second identity} is identical since the inequality given by \eqref{klein} also works for the limiting graph $(G, o)$. \qed

\subsection*{Proof of Lemma~\ref{lattice-to-graph}} 

\label{coupling proof}
Since the proof is very similar to the proof of Corollary 3.5 in \cite{van2021local}, we will be brief. Take $r = r(a,b,K) = a^b + a^{b^2} + a^{b^3} + \cdots + a^{b^K}$ for some $a, b > 1$. The event $\{B_K(L_r(G_n, o_n), o_n) \neq B_K(G_n, o_n)\}$ implies that there is some edge $(u_i,  u_{i+1})$ in $B_K(G_n, o_n)$ such that the lattice distance $d(u_i,  u_{i+1}) > a^{b^{L+1}}$. Let's call this event $\text{\textbf{Bad}}_{r, n}$ for $G_n$ and the corresponding event $\text{\textbf{Bad}}_{r}$ for $G$. Then, by union bound, we have
 \begin{equation} \label{prob-1}
     \mu_n(B_K(L_r(G_n, o_n), o_n) \neq B_K(G_n, o_n)) \leq \mu_n(\text{\textbf{Bad}}_{r, n}) \leq \sum_{j=1}^K \mathbb{P}[\mathcal{I}_{n,j}]
 \end{equation} and 
 \begin{equation} \label{prob-2}
     \mu(B_K(L_r(G, o), o) \neq B_K(G, o)) \leq \mu(\text{\textbf{Bad}}_{r}) \leq \sum_{j=1}^K \mathbb{P}[\mathcal{I}_{j}].
 \end{equation}
Since $K$ is fixed, the probabilities on the right-hand side of \eqref{prob-1} and \eqref{prob-2} tend to $0$ by Lemma~\ref{path counting}. \qed

\section{Local limit of the Kleinberg model at criticality: proof of Theorem~\ref{main-thm-kl-critical}}
\label{proof-main-thm-kl-critical-appendix}

\begin{proof}[Proof of Lemma~\ref{lem:boundary-avoidance}]
   Fix $r\ge 1$, and set the safe region
  $G_n^{\mathrm{safe}}:=G_{n-2(d_n+rk)}$ and the boundary strip
  \[
  S_n:=V(G_n)\setminus V(G_{n-d_n-rk}).
  \]
  Note that, by construction, every $v\in G_n^{\mathrm{safe}}$ has
  lattice distance at least $2(d_n+rk)$ from the boundary, and every
  $u\in S_n$ has lattice distance less than $d_n+rk$ from the boundary;
  hence
  \[
  d(v,u)\ \ge\ 2(d_n+rk)-(d_n+rk)\ =\ d_n+rk
  \qquad\text{for every }v\in G_n^{\mathrm{safe}},\ u\in S_n.
  \]
  Since $o_n$ is uniform on the $n\times n$ grid,
  \[
  \mathbb P[o_n\notin G_n^{\mathrm{safe}}]
  = 1-\frac{(n-4(d_n+rk))^2}{n^2}
  = O\!\left(\frac{d_n}{n}\right)
  = O\!\left(\frac1{\log n}\right)\to 0.
  \]
  Thus, with high probability, the root lies in $G_n^{\mathrm{safe}}$.

% Fix \(r\ge 1\), and define the boundary strip
% \[
% S_n:=V(G_n)\setminus V(G_{n-d_n-rk}),
% \]
% that is, the set of nodes whose lattice distance from the boundary is \(<d_n+rk\). Since \(o_n\) is uniform on the \(n\times n\) grid,
% \[
% \mathbb P[o_n\in S_n]
% =1-\frac{(n-2(d_n+rk))^2}{n^2}
% =O\!\left(\frac{d_n}{n}\right)
% =O\!\left(\frac1{\log n}\right)\to 0.
% \]
% Thus, with high probability, the root lies in \(G_{n-d_n-rk}\).

Fix \(\eta>0\). By Lemma~\ref{prelim-4}, there exists \(M=M(r,\eta)\) such that
\[
\sup_n \mathbb P\bigl(|B_r(G_n,o_n)|>M\bigr)<\eta.
\]
We will work on the event
\(
E_n:=\{|B_r(G_n,o_n)|\le M\}.
\)
We now show that, uniformly for nodes \(v\in G_{n-d_n-rk}\), the probability that a shortcut incident to \(v\) touches the strip \(S_n\) is \(o(1)\). For a single outgoing shortcut \(s\) from \(v\),
\[
\mathbb P[\text{$s$ lands in }S_n]
=\sum_{u\in S_n}\frac{d(v,u)^{-2}}{D_n(v)}.
\]
By Lemma~\ref{lem:uniform-normalization}, \(D_n(v)\ge \log n+O(1)\) uniformly in \(v\). 
% Also, since \(v\in G_{n-d_n-rk}\), every \(u\in S_n\) satisfies \(d(v,u)\ge d_n\). 
By the bound above, every $v\in G_n^{\mathrm{safe}}$ and every $u\in S_n$
  satisfy $d(v,u)\ge d_n+rk\ge d_n$.
Using that there are at most \(4j\) nodes at lattice distance \(j\), we obtain
\[
\sum_{u\in S_n} d(v,u)^{-2}
\le \sum_{j=d_n}^{n} 4j\cdot j^{-2}
=4\sum_{j=d_n}^{n}\frac1j
=O(\log(n/d_n))
=O(\log\log n).
\]
Since \(v\) has exactly \(q\) outgoing shortcuts, a union bound gives
\[
\mathbb P[\text{some outgoing shortcut from }v\text{ lands in }S_n]
\le \frac{q\,O(\log\log n)}{\log n+O(1)}
=o(1).
\]
Similarly, the expected number of incoming shortcuts from \(S_n\) to \(v\) is at most
\[
\sum_{u\in S_n} q\,\frac{d(u,v)^{-2}}{D_n(u)}
\le \frac{q}{\log n+O(1)}\sum_{u\in S_n} d(u,v)^{-2}
=O\!\left(\frac{\log\log n}{\log n}\right)
=o(1),
\]
again uniformly in \(v\). By Markov's inequality,
\[
\mathbb P[\exists \text{ an incoming shortcut from }S_n\text{ to }v]=o(1),
\]
uniformly in \(v\in G_n^{\mathrm{safe}}\). Hence, uniformly in \(v\in G_n^{\mathrm{safe}}\),
\[
\mathbb P[\text{some shortcut incident to }v\text{ touches }S_n]=o(1).
\]

On \(\{o_n\in G_n^{\mathrm{safe}}\}\cap E_n\), the ball \(B_r(G_n,o_n)\) contains at most \(M\) nodes. Therefore, by a union bound,
\[
\mathbb P\bigl[\exists v\in B_r(G_n,o_n)\cap G_n^{\mathrm{safe}}
\text{ such that some shortcut incident to }v\text{ touches }S_n \,\bigm|\, \{o_n\in G_n^{\mathrm{safe}}\}\cap E_n\bigr]
=o(1).
\]

If no shortcut incident to any node of \(B_r(G_n,o_n)\cap G_n^{\mathrm{safe}}\) touches \(S_n\), then every shortcut seen during the exploration from such nodes stays inside \(G_n^{\mathrm{safe}}\). Moreover, starting from a node in \(G_n^{\mathrm{safe}}\), each lattice step can decrease the distance to the boundary by at most \(k\). Thus, on \(\{o_n\in G_n^{\mathrm{safe}}\}\cap E_n\) and on the complement of the event above, we can conclude that
\(
B_r(G_n,o_n)\subseteq G_{n-d_n}.
\)
Consequently,
\[
\mathbb P\bigl[B_r(G_n,o_n)\subseteq G_{n-d_n}\bigr]
\ge \mathbb P(\{o_n\in G_n^{\mathrm{safe}}\}\cap E_n)-o(1)
\ge 1-\mathbb P(o_n\in S_n)-\mathbb P(E_n^c)-o(1).
\]
Taking \(\liminf_{n\to\infty}\) and using \(\mathbb P(o_n\in S_n)\to 0\), we obtain
\[
\liminf_{n\to\infty}\mathbb P\bigl[B_r(G_n,o_n)\subseteq G_{n-d_n}\bigr]\ge 1-\eta.
\]
Since \(\eta>0\) was arbitrary, the claim follows.
\end{proof}

\begin{proof}[Proof of Lemma~\ref{lem:critical-poisson-input}]
For each \(v\in V(G_n)\), let \(\mathrm{out}_{v,i}\in\{0,\dots,q\}\) denote the
number of outgoing shortcuts from \(v\) that have already been revealed before
step \(i\). For each \(v\notin S_i\cup\{u\}\), let \(Y_{v,i}\) denote the number
of still-unrevealed outgoing shortcuts from \(v\) that land at \(u\). Then, we have
\[
Y_{v,i}\sim \mathrm{Bin}\!\left(q-\mathrm{out}_{v,i},\,p_{v,i}\right),
\qquad
p_{v,i}:=\frac{d(v,u)^{-2}}{D_n(v)},
\]
and the family \(\{Y_{v,i}\}_{v\notin S_i\cup\{u\}}\) is independent. Hence, the number of incoming shortcuts satsify
\[
\mathrm{In}(u,V(G_n)\setminus S_i)
=\sum_{v\notin S_i\cup\{u\}} Y_{v,i}.
\]
Define
\[
\lambda_n:=\sum_{v\notin S_i\cup\{u\}} (q-\mathrm{out}_{v,i})p_{v,i},
\]
giving the mean of \(\mathrm{In}(u,V(G_n)\setminus S_i)\). We
first show that \(\lambda_n=q+o(1)\).
Set
\[
b_n:=\Big\lfloor \frac{d_n}{2}\Big\rfloor \text{ for } 
d_n:=\left\lfloor \frac{n}{\log n}\right\rfloor,
\qquad
N_n:=\{w\in V(G_n): d(w,u)\le b_n\}.
\]
Since \(u\) is at lattice distance at least \(d_n\) from the boundary, every
\(w\in N_n\) is at lattice distance at least \(d_n-b_n\ge b_n\) from the
boundary. Repeating the proof of Lemma~\ref{lem:uniform-normalization} with
\(b_n\) in place of \(d_n\), we obtain
\[
D_n(w)=4\log n+O(\log\log n)
\qquad\text{uniformly for }w\in N_n.
\]
Therefore, we get
\[
\sum_{w\in N_n} q\,p_{w,i}
=\frac{q}{4\log n+O(\log\log n)}\sum_{j=1}^{b_n}4j\cdot j^{-2}
=q\,\frac{4\log b_n+O(1)}{4\log n+O(\log\log n)}
=q+o(1),
\]
since \(\log b_n/\log n\to 1\).

For the complement \(N_n^c\), Lemma~\ref{lem:uniform-normalization} gives the
uniform lower bound
\[
D_n(w)\ge \log n+O(1)
\qquad\text{for all }w\in V(G_n),
\]
giving
\[
\sum_{w\notin N_n} q\,p_{w,i}
\le \frac{q}{\log n+O(1)}\sum_{j>b_n}4j\cdot j^{-2}
=O\!\left(\frac{\log(n/b_n)}{\log n}\right)
=o(1).
\]

It remains to control the correction coming from the factors
\(\mathrm{out}_{v,i}\). Because $|S_i| = O(1)$ by assumption, we have
\(
\sum_{v\in V(G_n)} \mathrm{out}_{v,i} = C,
\)
for some constant $C$.
Using again \(D_n(v)\ge \log n+O(1)\) and \(d(v,u)\ge 1\), we get
\[
p_{v,i}\le \frac{1}{\log n+O(1)},
\qquad
\sum_{v\notin S_i\cup\{u\}} \mathrm{out}_{v,i}p_{v,i}
\le \frac{C}{\log n+O(1)}
=o(1).
\]
Combining the near contribution, the far contribution, and this correction term
yields
\(
\lambda_n=q+o(1).
\)
We now identify the full conditional law. Here and below,
\(\mathcal L(X)\) denotes the law of \(X\). Since
\(\mathrm{In}(u,V(G_n)\setminus S_i)\) is a sum of independent Bernoulli random
variables, Le Cam's inequality gives
\[
d_{\mathrm{TV}}\!\left(
\mathcal L\bigl(\mathrm{In}(u,V(G_n)\setminus S_i)\bigr),
\,\mathrm{Poi}(\lambda_n)\right)
\le
2\sum_{v\notin S_i\cup\{u\}} (q-\mathrm{out}_{v,i})p_{v,i}^2.
\]
Using again \(D_n(v)\ge \log n+O(1)\), we obtain
\(
p_{v,i}^2\le \frac{d(v,u)^{-4}}{(\log n+O(1))^2}.
\)
Therefore, we have
\[
\sum_{v\notin S_i\cup\{u\}} (q-\mathrm{out}_{v,i})p_{v,i}^2
\le
\frac{q}{(\log n+O(1))^2}\sum_{j\ge 1}4j\cdot j^{-4}
=O\!\left(\frac{1}{\log^2 n}\right)\to 0,
\]
and
\[
d_{\mathrm{TV}}\!\left(
\mathcal L\bigl(\mathrm{In}(u,V(G_n)\setminus S_i)\bigr).
\,\mathrm{Poi}(\lambda_n)\right)\to 0
\]
Since \(\lambda_n\to q\), we
also have
\(
d_{\mathrm{TV}}\bigl(\mathrm{Poi}(\lambda_n),\mathrm{Poi}(q)\bigr)\to 0.
\)
By the triangle inequality,
\[
d_{\mathrm{TV}}\!\left(
\mathcal L\bigl(\mathrm{In}(u,V(G_n)\setminus S_i) \bigr),
\,\mathrm{Poi}(q)\right)\to 0.
\]
\end{proof}

Next, we move on to the proof of the first moment.

\begin{lemma}[First moment at criticality]
For every fixed \(r\ge 1\) and every finite rooted graph \(H^*=B_r(H^*,o^*)\) with
\(\mu_r\left[(G,o)=H^*\right]>0\),
\[
\mu_{r,n}\left[(G_n,o_n)=H^*\right]\to \mu_r\left[(G,o)=H^*\right].
\]
\end{lemma}

\begin{proof}
Fix \(r\) and \(H^*\), and let
\(
V(H^*)=\{v_0^*,v_1^*,\dots,v_{m-1}^*\}
\)
be the BFS ordering from Section~\ref{sec:node-marks-ordering}. For each \(i\), let \(x_i\) be the number of incoming shortcuts incident
to \(v_i^*\) that are discovered at the \(i\)-th step of the breadth-first exploration. Let \(T_i\) be the event that the first \(i+1\) revealed nodes in the BFS exploration of \(B_r(G_n,o_n)\)
agree with \(H^*\) up to that stage. Also let \(A_n\) be the event that this exploration stays inside \(G_{n-d_n}\)
and contains no cycle with a shortcut edge. By Lemma~\ref{lem:boundary-avoidance} and Lemma~\ref{lem:no-local-shortcut-cycle},
\(
\mathbb P\left[A_n\right]\to 1.
\)
On \(A_n\), each shortcut revealed by the BFS leads to a new patch. Moreover, conditional on
\(T_{i-1}\cap A_n\), Lemma~\ref{lem:critical-poisson-input} gives, uniformly over all admissible histories,
\[
\mathbb P\left[v_{i,n}\text{ has }x_i\text{ incoming shortcuts}\mid T_{i-1},A_n\right]
\to e^{-q}\frac{q^{x_i}}{x_i!}.
\]
The lattice edges are deterministic, every node has exactly \(q\) outgoing shortcuts, and once shortcut
cycles are excluded, an outgoing shortcut leads to an independent copy of \(K_{=}(q,k,2)\) while an incoming
shortcut leads to an independent reduced copy of \(K_{=}(q,k,2)\). Hence, the joint law of the BFS data at
each explored node converges to the corresponding law in the critical limit object.

Since \(|H^*|<\infty\), multiplying the finitely many conditional probabilities along the breadth-first
exploration yields
\[
\mu_{r,n}\left[(G_n,o_n)=H^*\mid A_n\right]\to \mu_r\left[(G,o)=H^*\right].
\]
Since \(\mathbb P\left[A_n\right]\to 1\), the same convergence holds unconditionally.
\end{proof}

Next, the proof of second moment: The argument here is the same as Lemma~\ref{ws-2moment} or Appendix~\ref{proof-kl-2moment}, but simpler because marks are no longer relevant.

\begin{lemma}[Second moment at criticality]
For every fixed \(r\ge 1\) and every finite rooted graph \(H^*=B_r(H^*,o^*)\),
\[
\operatorname{Var}\!\left(\frac1{n^2}\sum_{u\in V(G_n)} \mathbf 1_r\big((G_n,u)=H^*\big)\right)\to 0.
\]
\end{lemma}

\begin{proof}
By symmetry, it is enough to show that for an arbitrary node \(v\in V(G_n)\) and a uniformly chosen
\(w\in V(G_n)\setminus\{v\}\),
\[
\mathbb P_w\bigl[B_r(G_n,v)=H^*,\,B_r(G_n,w)=H^*\bigr]
=
\mathbb P\bigl[B_r(G_n,v)=H^*\bigr]^2+o(1).
\]

For a fixed realization of \(G_n\) and fixed \(v\in V(G_n)\), by Lemma~\ref{prelim-4},
\[
\mathbb P_w\bigl[d_{G_n}(v,w)\le 2r\bigr]
\le
\frac{\mathbb E|B_{2r}(G_n,v)|}{n^2-1}
=
O\!\left(\frac1{n^2}\right)\to 0.
\]
Hence, it suffices to work on the event \( \{d_{G_n}(v,w)>2r\} \). Let \(G_n^{v,2r}:=G_n[V(G_n)\setminus V(B_{2r}(G_n,v))]\). Then,
\[
\mathbb P_w\bigl[B_r(G_n,w)=H^* \mid B_r(G_n,v)=H^*,\,d_{G_n}(v,w)>2r\bigr]
=
\mathbb P_{w\sim V(G_n^{v,2r})}\bigl[B_r(G_n^{v,2r},w)=H^* \mid B_r(G_n,v)=H^*\bigr].
\]

Fix \(\eta>0\). By Lemma~\ref{prelim-4}, there exists \(M=M(2r,\eta)\) such that
\(
\sup_n \mathbb P\bigl[|B_{2r}(G_n,v)|>M\bigr]<\eta.
\)
Recall \(G_{n-d_n}\) denotes the induced subgraph of the \(n\times n\) grid on the nodes whose lattice distance from the boundary is at least \(d_n := \left\lfloor \frac{n}{\log n}\right\rfloor
\).  Let \(A_v\) be the event
\[
\bigl\{ |B_{2r}(G_n,v)|\le M,\;
B_{2r}(G_n,v)\subseteq G_{n-d_n},
\ \text{and}\
B_{2r}(G_n,v)\text{ contains no cycle with a shortcut edge}\bigr\}.
\]
By the choice of \(M\), Lemma~\ref{lem:boundary-avoidance}, and Lemma~\ref{lem:no-local-shortcut-cycle},
\(
\limsup_{n\to\infty}\mathbb P(A_v^c)\le \eta .
\)

We now condition on \(A_v\). Run the breadth-first exploration of \(B_r(G_n^{v,2r},w)\). Let
\(V(H^*)=\{v_0^*,v_1^*,\dots,v_{m-1}^*\}\)
be the BFS ordering of \(H^*\), and let \(x_i\) be the number of incoming shortcuts incident to \(v_i^*\) discovered
at the \(i\)-th step. Let \(T_i^v\) be the event that the first \(i+1\) revealed nodes in the BFS exploration of
\(B_r(G_n^{v,2r},w)\) agree with \(H^*\) up to that stage. Also let \(A_{v,w}\) be the event that this \(w\)-exploration
stays inside \(G_{n-d_n}\) and contains no cycle with a shortcut edge.

Since on \(A_v\) we have \(|V(B_{2r}(G_n,v))|\le M\), the graph \(G_n^{v,2r}\) is obtained from \(G_n\) by deleting at most \(M\) nodes.
Thus, the proofs of Lemma~\ref{lem:boundary-avoidance} and Lemma~\ref{lem:no-local-shortcut-cycle} apply in the same way to the
\(w\)-exploration in \(G_n^{v,2r}\), and therefore,
\[
\mathbb P(A_{v,w}\mid A_v)\to 1.
\]

On \(A_v\cap A_{v,w}\), the exploration is identical to the first-moment exploration except that the nodes of
\(V(B_{2r}(G_n,v)\) are no longer available as shortcut endpoints. Since \(|V(B_{2r}(G_n,v))|\le M\), following the proof of
Lemma~\ref{lem:uniform-normalization}, removing \(V(B_{2r}(G_n,v)\) changes each outgoing shortcut probability by \(o(1)\),
uniformly over all admissible histories. Likewise, following the proof of
Lemma~\ref{lem:critical-poisson-input}, the conditional law of the incoming shortcuts at each revealed node
is still \(\mathrm{Poi}(q)+o(1)\), uniformly over all admissible histories in the \(w\)-exploration. Therefore, for each
\(i=0,\dots,m-1\), the conditional probability of the \(i\)-th BFS step in \(G_n^{v,2r}\) differs from the corresponding
conditional probability in a new exploration of \(G_n\) by \(o(1)\), uniformly over all realizations of the
\(v\)-exploration on \(A_v\).

Since \(m=|V(H^*)|<\infty\), multiplying these finitely many conditional probabilities along the breadth-first
exploration yields
\[
\mathbb P_{w\sim V(G_n^{v,2r})}\bigl[B_r(G_n^{v,2r},w)=H^* \mid B_r(G_n,v)=H^*,\,A_v\bigr]
=
\mathbb P\bigl[B_r(G_n,w)=H^*\bigr]+o(1).
\]
Therefore, we have
\begin{align*}
&\mathbb P_w\bigl[B_r(G_n,v)=H^*,\,B_r(G_n,w)=H^*,\,d_{G_n}(v,w)>2r\bigr] \\
&\qquad=
\mathbb P\bigl[B_r(G_n,v)=H^*,\,A_v\bigr]
\Bigl(\mathbb P\bigl[B_r(G_n,w)=H^*\bigr]+o(1)\Bigr)
+O\bigl(\mathbb P(A_v^c)\bigr).
\end{align*}
Since
\[
\mathbb P\bigl[B_r(G_n,v)=H^*,\,A_v\bigr]
=
\mathbb P\bigl[B_r(G_n,v)=H^*\bigr]+O\bigl(\mathbb P(A_v^c)\bigr),
\]
we obtain
\[
\mathbb P_w\bigl[B_r(G_n,v)=H^*,\,B_r(G_n,w)=H^*,\,d_{G_n}(v,w)>2r\bigr]
=
\mathbb P\bigl[B_r(G_n,v)=H^*\bigr]^2+O(\eta)+o(1).
\]
Combining this with \(\mathbb P_w[d_{G_n}(v,w)\le 2r]\to 0\), we get
\[
\mathbb P_w\bigl[B_r(G_n,v)=H^*,\,B_r(G_n,w)=H^*\bigr]
=
\mathbb P\bigl[B_r(G_n,v)=H^*\bigr]^2+O(\eta)+o(1).
\]
Noting the choice of \(\eta>0\) was arbitrary concludes the proof.
\end{proof}

Finally, by the two lemmas above and the same Chebyshev argument as in the proof of Lemma~\ref{main-lemma-ws},
\[
\frac1{n^2}\sum_{u\in V(G_n)} \mathbf 1_r\big((G_n,u)=H^*\big)
\xrightarrow{\mathbb P}
\mu_r\left[(G,o)=H^*\right]
\]
for every fixed \(r\) and every finite rooted graph \(H^*\) with \(\mu_r\left[(G,o)=H^*\right]>0\). This is precisely
local convergence in probability of \(K(n,q,k,2)\) to \(K_{=}(q,k,2)\). Hence, Theorem~\ref{main-thm-kl-critical}
follows. \qed

\section{Proof of Applications: Clustering Coefficient} \label{proof-applications-cluster}

\subsection*{Proof of Corollary \ref{cor1}}

Let \((G,o)\sim \mu\) be the local limit from Theorem~\ref{main-thm-ws}.  \cite[Theorem~2.23]{remco} yields
\[
C_{\mathrm{local}}
=\frac{1}{|V|}\sum_{u\in V} C(u)
\xrightarrow{\mathbb P}
\mathbb E_{\mu}\left[\frac{\Delta_G(o)}{d_o(d_o-1)}\right].
\]
For the global clustering coefficient,  \cite[Theorem~2.22]{remco} gives
\[
C_{\mathrm{global}} \xrightarrow{\mathbb P} \frac{\mathbb E_{\mu}[\Delta_G(o)]}{\mathbb E_{\mu}[d_o(d_o - 1)]}.
\]
Thus, it remains to compute the numerator and denominator.

Recall that the outgoing and incoming shortcuts in the limiting graph connect to different full or reduced \(k\)-paths and, hence, cannot be part of a triangle (or any cycle). Thus, the triangles that contain the root can only be created by the non-rewired ring edges. The number of ordered pairs of adjacent neighbors of the root in a full \(k\)-path is \(3k(k-1)\), so twice the number of triangles containing the root is \(3k(k-1)\). Each such triangle survives the rewiring procedure exactly when its three constituent ring edges are not rewired, which occurs with probability \((1-\phi)^3\). Therefore,
\[
\mathbb E_{\mu}[\Delta_G(o)] = 3k(k-1)(1-\phi)^3.
\]
Next, the degree of the root under \(\mu\) is distributed as
\[
k+ \mathrm{Bin}(k,1-\phi) + \mathrm{Poi}(\phi k),
\]
corresponding to the outgoing ring edges, incoming ring edges that were not rewired, and incoming shortcuts, respectively. Hence,
\[
\mathbb{E}[d_o^2 - d_o]
= \mathrm{Var}(d_o) + \mathbb{E}[d_o]^2 - \mathbb{E}[d_o]
= k(1-\phi)\phi + \phi k + 4k^2 - 2k.
\]
Therefore,
\[
\mathbb E_{\mu}[d_o(d_o-1)] = k(1-\phi)\phi + \phi k + 4k^2 - 2k.
\]
Substituting these expressions gives
\[
\frac{\mathbb E_{\mu}[\Delta_G(o)]}{\mathbb E_{\mu}[d_o(d_o-1)]}
=
\frac{3k(k-1)(1-\phi)^3}{k(1-\phi)\phi + \phi k + 4k^2 - 2k}
=
\frac{3(k-1)}{2(2k-1) + \phi (2 - \phi)} (1-\phi)^3.
\]
\qed

\subsection*{Proof of Corollary \ref{cor2}}

For \(\ell\le 2\), each shortcut in the local limit connects to a different \(k\)-lattice and therefore cannot be part of a triangle; hence triangles arise only from lattice edges. Consequently,
\[
\Delta_G(o)=\Delta_{L_k}
\]
with high probability.
First, suppose \(\ell<2\). Under the local limit \(\mu\) from Theorem~\ref{main-thm-kl-1}, the degree of the root decomposes as
\[
d_o = A + X_m,
\qquad A:=2k(k+1)+q,
\]
where, conditional on the mark \(m\), the random variable \(X_m\) is Poisson with mean \(\Lambda_m\). Then, Theorem~2.23 in~\cite{remco} yields
\[
C_{\mathrm{local}}
=\frac{1}{|V|}\sum_{u\in V} C(u)
\xrightarrow{\mathbb P}
\mathbb E\!\left[\frac{\Delta_{L_k}}{(A+X_m)(A+X_m-1)}\right].
\]
For the global clustering coefficient, \cite[Theorem~2.22]{remco} yields
\[
C_{\mathrm{global}}
\xrightarrow{\mathbb P}
\frac{\mathbb E_{\mu}[\Delta_G(o)]}{\mathbb E_{\mu}[d_o(d_o-1)]}.
\]
Since \(\Delta_G(o)=\Delta_{L_k}\), it remains to compute the denominator. Conditioning on \(m\),
\[
\mathbb E[d_o(d_o-1)\mid m]
=\mathrm{Var}(d_o\mid m)+\mathbb E[d_o\mid m]^2-\mathbb E[d_o\mid m]
=\Lambda_m+(A+\Lambda_m)^2-(A+\Lambda_m)
=(A+\Lambda_m)^2-A.
\]
Averaging over \(m\) gives
\[
\mathbb E_{\mu}[d_o(d_o-1)]
=\mathbb E_m\!\big[(A+\Lambda_m)^2\big]-A
= A^2 - A + 2A\,\mathbb E[\Lambda_m] + \mathbb E[\Lambda_m^2].
\]
Substituting \(\Delta_G(o)=\Delta_{L_k}\) proves the formula for \(C_{\mathrm{global}}\).

For \(\ell=2\), the same argument applies, except that \(\Lambda_m=q\) for all \(m\). Thus, if \(X\sim \mathrm{Poi}(q)\), then
\(
d_o = A+X,
\)
and
\[
C_{\mathrm{local}}
=\frac{1}{|V|}\sum_{u\in V} C(u)
\xrightarrow{\mathbb P}
\mathbb E\!\left[\frac{\Delta_{L_k}}{(A+X)(A+X-1)}\right].
\]
Moreover,
\[
\mathbb E[d_o(d_o-1)]
=\mathbb E[(A+X)(A+X-1)]
=(A+q)^2-A,
\]
so
\[
C_{\mathrm{global}}\xrightarrow{\mathbb P}\frac{\Delta_{L_k}}{(A+q)^2-A}.
\]
This completes the proof.
\qed

\end{document}